\theoremstyle{plain}
\newtheorem{theorem}{Theorem}[section]
\newtheorem{proposition}[theorem]{Proposition}
\newtheorem{lemma}[theorem]{Lemma}
\newtheorem{corollary}[theorem]{Corollary}
\theoremstyle{definition}
\newtheorem{definition}[theorem]{Definition}
\theoremstyle{remark}
\newtheorem{remark}[theorem]{Remark}
\newtheorem{example}[theorem]{Example}
\newtheorem{examples}[theorem]{Examples}
\newcommand{\ovl}{\overline}
\newcommand{\Kern}{\mathrm{Ker}}
\renewcommand{\lim}{\mathrm{lim}}
\newcommand{\colim}{\mathrm{colim}}
\newcommand{\sh}{\mathrm{sh}}
\newcommand{\ex}{\mathrm{ex}}
\newcommand{\dg}{\mathrm{dg}}
\newcommand{\hoch}{\mathrm{hoch}}
\newcommand{\gro}{\mathrm{gro}}
\newcommand{\mac}{\mathrm{mac}}
\newcommand{\inj}{\mathsf{inj}}
\newcommand{\add}{\mathrm{add}}
\newcommand{\free}{\mathsf{free}}
\newcommand{\Ext}{\mathrm{Ext}}
\newcommand{\Hom}{\mathrm{Hom}}
\newcommand{\RHom}{\mathrm{RHom}}
\newcommand{\op}{^{\mathrm{op}}}
\newcommand{\Ob}{\mathrm{Ob}}
\newcommand{\Z}{\mathbb{Z}}
\newcommand{\AAA}{\mathfrak{a}}
\newcommand{\UUU}{\mathfrak{u}}
\newcommand{\CC}{\mathbf{C}}
\newcommand{\Set}{\ensuremath{\mathsf{Set}} }
\newcommand{\Ab}{\ensuremath{\mathsf{Ab}} }
\newcommand{\Mod}{\ensuremath{\mathsf{Mod}} }
\newcommand{\Lex}{\ensuremath{\mathsf{Lex}} }
\newcommand{\Sh}{\ensuremath{\mathsf{Sh}} }
\newcommand{\Ind}{\ensuremath{\mathsf{Ind}}}
\newcommand{\Fun}{\ensuremath{\mathsf{Fun}}}
\newcommand{\Add}{\ensuremath{\mathsf{Add}}}
\newcommand{\lra}{\longrightarrow}
\newcommand{\aaa}{\ensuremath{\mathcal{A}}}
\newcommand{\bbb}{\ensuremath{\mathcal{B}}}
\newcommand{\ccc}{\ensuremath{\mathcal{C}}}
\newcommand{\ddd}{\ensuremath{\mathcal{D}}}
\newcommand{\LLL}{\ensuremath{\mathcal{L}}}
\newcommand{\sss}{\ensuremath{\mathcal{S}}}
\newcommand{\ttt}{\ensuremath{\mathcal{T}}}
\newcommand{\www}{\ensuremath{\mathcal{W}}}
\newcommand{\idot}{{\:\raisebox{1pt}{\text{\circle*{1.5}}}}}
\newcommand{\hdot}{{\:\raisebox{3pt}{\text{\circle*{1.5}}}}}
\newcommand{\amod}{{\text{\rm -mod}}}
\newcommand{\Spec}{\operatorname{Spec}}
\newcommand{\id}{{\sf id}}
\title{Cohomology of exact categories and (non-)additive sheaves}
\author{Dmitry Kaledin} 
\address[Dmitry Kaledin]{Steklov Math Institute, Moscow, USSR}
\email{kaledin@mi.ras.ru}
\author{Wendy Lowen} 
\address[Wendy Lowen]{Departement Wiskunde-Informatica, Middelheimcampus,
Middelheimlaan 1,
2020 Antwerp, Belgium}
\email{wendy.lowen@ua.ac.be}
\thanks{The first author has been partially supported by AG Laboratory
    SU-HSE, RF government grant, ag. 11.G34.31.0023, the RFBR grant 09-01-00242 and the Science
Foundation of the
SU-HSE award No. 10-09-0015. The second author is postdoctoral fellow with the Research Foundation - Flanders (FWO) and acknowledges the support of the European Union, ERC grant No 
257004-HHNcdMir}
\begin{document}
\maketitle

\begin{abstract}
We use (non-)additive sheaves to introduce an (absolute) notion of Hochschild cohomology for exact categories as $\Ext$'s in a suitable bisheaf category. We compare our approach to various definitions present in the literature.
\end{abstract}

\section{Introduction}

Given an associative algebra $A$ over a field $k$, one can define
its Hochschild homology groups $HH_\idot(A)$ and its Hochschild
cohomology groups $HH^\hdot(A)$. Non-commutative geometry, in its
homological version, starts with the observation that Hochschild
homology classes behave ``as differential forms'', while Hochschild
cohomology classes are similar to vector fields. When $A$ is
commutative and $\Spec A$ is a smooth algebraic variety over $k$,
this observation becomes a precise theorem, namely, the famous
theorem of Hochschild, Kostant and Rosenberg \cite{HKR}. In the general case,
both $HH_\idot(A)$ and $HH^\hdot(A)$ still carry some additional
structures analogous to what one finds for a commutative algebra. For
$HH_\idot(A)$, the relevant structure is the Connes-Tsygan
differential $B$ which gives rise to cyclic homology -- this is
analogous to the de Rham differential. For $HH^\hdot(A)$, the
structure is the so-called Gerstenhaber bracket which turns
$HH^\hdot(A)$ into a Lie algebra -- this is analogous to the Lie
bracket of vector fields. There are certain natural compatibilities
between the bracket and the differential, axiomatized by Tsygan and
Tamarkin under the name of ``non-commutative calculus'' \cite{tamarkintsygan}.

\medskip

If one thinks of an algebra $A$ as a simple example of a
``non-commutative algebraic variety'', then Hochschild homology
usually gives rise to homological invariants of the variety, such as
e.g. de Rham or cristalline cohomology. Hochschild cohomology, on
the other hand, is intimately related to automorphisms and
deformations of $A$.

\medskip

For real-life applications, it is highly desirable to extend the
basic theory of Hochschild homology and cohomology to ``more
general'' non-commutative varieties. This can mean different things
in different contexts; but at the very least, one should be able to
develop the theory for an abelian category $\ccc$ (a motivating
observation here is that if two algebras $A$, $B$ have equivalent
categories $A\amod \cong B\amod$ of left modules, then their
Hochschild homology and cohomology are canonically identified). For
Hochschild homology, this has been accomplished in a more-or-less
exhaustive fashion by B. Keller \cite{keller} back in the
1990ies. For Hochschild cohomology, the story should be simpler:
morally speaking, the Hochschild cohomology algebra $HH^\hdot(\ccc)$
should just be the algebra of $\Ext$'s from the identity endofunctor
of $\ccc$ to itself. However, finding an appropriate category where
these $\Ext$'s can be computed is a delicate matter. 

Perhaps because of this, the rigorous cohomological theory appeared
later than the homological one; essentially, it was started in
\cite{lowenvandenbergh1}, \cite{lowenvandenbergh2}, where a
Hochschild cohomology theory for abelian categories is constructed,
and its relation to deformations of the category is discussed.  But
unfortunately, the theory that exists so far is closely modeled on
the theory for associative algebras. As a result, it lacks some
essential features which should in fact become automatic in the
consistently categorical approach. This becomes quite obvious when
one tries to apply the theory to concrete problems; for one example
of this, we refer the reader to \cite{kaledin1}, where the
application intended is to Gabber's involutivity theorem.

The present paper arose as an attempt to at least fill the gaps
noted in \cite{kaledin1}, and at most, to sketch a more-or-less
comprehensive theory of Hochschild cohomology of abelian categories
and its relation to deformations. As it happens, already the
definitions of Hochschild cohomology, when done accurately, take up
quite a lot of space. This is as far as we get in this paper,
relegating both the Gerstenhaber bracket and the deformation theory
story to subsequent work.

One additional thing that emerges clearly in the categorical
approach is the ability to work ``absolutely'', not over a fixed
field $k$. The motivating example here is very basic: the category
of vector spaces over $\Z/p\Z$ has a natural ``first-order
deformation'' to the category of modules over $\Z/p^2\Z$. A truly
comprehensive Hochschild cohomology theory for abelian categories
should include this example, and assign to it a non-trivial
deformation class. Some of the theories we construct in the present
paper should be able to do this. In order to achieve this, we have to
spend quite a lot of time on foundations, but we believe that
ultimately, this is time well spent.

\medskip

The general outline of the paper is as follows. As we have already
noted, the definition of Hochschild cohomology should be obvious
once one has an appropriate category of endofunctors of our abelian
category $\ccc$. If $\ccc$ is the category of modules over an
algebra $A$, then a natural candidate for its endofunctor category
is the category of bimodules over the same algebra -- this is what
gives the classical Hochschild cohomology. An ``absolute'' version
of this story also exists, and it has been known for quite some time
now, starting from \cite{jibladzepirashvili1}. However, the
situation for a general abelian category $\ccc$ turns out to be
somewhat delicate. In Section~\ref{sec.1}, we discuss in some detail
various embedding theorems which allow one to represent an abelian
category $\ccc$ as a category of sheaves on itself, then define its
endofunctor category as a category of sheaves on $\ccc^{\op} \times
\ccc$, and so on. In the module category case, everything is known,
but we reproduce the results for the convenience of the reader; the
general case requires using appropriate Grothendieck topologies,
and this seems to be new. As an unexpected bonus, we discover along the way
that a very natural relaxation of some conditions produces exactly
the exact categories in the sense of Quillen, so that the whole
story generalizes to exact categories without any changes at all. In
Section~\ref{sec.2}, we discuss the derived versions of the sheaf
categories and various exactness properties of natural functors
between them. Then in Section~\ref{sec.3}, we are finally able to
introduce Hochschild cohomology. We also discuss other definitions
present in the literature and prove various comparison theorems
between them. Of course, to be useful, such a list of comparison
theorems should be exhaustive; this we have strived to achieve, to
the best of our knowledge. In particular, we do treat the absolute
case -- the relevant notion here is Mac Lane homology and
cohomology. Finally, in the last section, we discuss informally what
does {\em not} work in our approach, especially in the absolute
case, and what is the relation between our work and more abstract
theory based on various triangulated category enhancements.

\bigskip

\noindent \emph{Acknowledgement.}
Over the years, we have had an opportunity to discuss Hochschild
homology and cohomology with many people, and benefited a lot from
these discussions. The first author is particularly grateful for the
things he has learned from M. Kontsevich, L. Hesselholt,
D. Tamarkin, and B. Tsygan.
The second author is grateful to Ragnar Buchweitz for bringing the
definition of Hochschild cohomology in \S \ref{hhtop} to her
attention and for explaining the proof of Theorem \ref{maingroth}
for module categories. She is also grateful to Michel Van den Bergh
for numerous things she learned from him, some dating back to her
master thesis - under his supervision - on Grothendieck categories
and additive sheaves.
After the first version of the paper was posted to the web, we have
received several valuable comments and explanations from T. Pirashvili,
for which we are very grateful.

\section{Sheaf categories}\label{sec.1}

This section contains some basic notions and facts concerning sheaves taking values in the category $\Ab$ of abelian groups. The setting in which we will work is that of single morphism topologies, i.e. topologies for which covers are determined by the morphisms in a certain collection $\Lambda$. Our main application is to exact categories $\ccc$, for which $\ccc$ comes naturally equipped with the single deflation topology, and $\ccc^{\op}$ with the single inflation topology. In this context, we introduce a number of bifunctor categories consisting of bifunctors that are additive in some of the variables and sheaves in some of the variables.

\subsection{Additive topologies}\label{paraddtop}
In this section we mainly fix some notations and terminology. For categories $\ccc$, $\ddd$ with $\ccc$ small we denote by $\Fun(\ccc, \ddd)$ the category of functors from $\ccc$ to $\ddd$, and we put $\Fun(\ccc) = \Fun(\ccc^{\op}, \Ab)$. For $\Z$-linear categories $\ccc$, $\ddd$ with $\ccc$ small we denote by $\Add(\ccc, \ddd)$ the category of additive functors from $\ccc$ to $\ddd$, and we put $\Mod(\ccc) = \Add(\ccc^{\op}, \Ab)$. Objects of $\Fun(\ccc)$ are called functors while objects of $\Mod(\ccc)$ are called modules. By a \emph{topology} on a small category $\ccc$ we mean a Grothendieck topology. On a small \emph{$\Z$-linear} category we will also use the parallel enriched notion of an \emph{additive} topology (see \cite{borceuxquinteiro}, \cite{popescu}, \cite{lowen1}). This is obtained from the usual notion of a Grothendieck topology by replacing $\Set$ by $\Ab$ and $\Fun(\ccc^{\op}, \Set)$ by $\Mod(\ccc)$.
More precisely:
\begin{definition}
An \emph{additive topology} $\ttt$ on a small $\Z$-linear category $\ccc$ is given by specifying for each object $C \in \ccc$ a collection $\ttt(C)$ of submodules of $\ccc(-,C) \in \Mod(\ccc)$ satisfying the following axioms:
\begin{enumerate}
\item $\ccc(-,C) \in \ttt(C)$.
\item For $R \in \ttt(C)$ and $f: D \lra C$ in $\ccc$ the pullback $f^{-1}R$ in $\Mod(\ccc)$ of $R$ along $f\circ -: \ccc(-,D) \lra \ccc(-,C)$ is in $\ttt(D)$.
\item Consider $S \in \ttt(C)$ and an arbitrary submodule $R \subseteq \ccc(-,C)$. If for every $D \in \ccc$ and $f \in S(D)$ the pullback $f^{-1}R$ is in $\ttt(D)$, then it follows that $R \in \ttt(C)$.
\end{enumerate}
\end{definition}

An additive topology on a one-object $\Z$-linear category corresponds precisely to a Gabriel topology on a ring \cite{gabriel}.

As usual, a submodule $R \subseteq \ccc(-,C)$ is identified with the set $\coprod_{D \in \ccc}R(D) \subseteq \coprod_{D \in \ccc}\ccc(D,C)$, i.e. $R$ is considered as an ``additive sieve''. A submodule $R \in \ttt(C)$ is called a \emph{cover (of $C$)}. An additive topology $\ttt$ on $\ccc$ determines a Grothendieck category $\Sh^{\mathrm{add}}(\ccc, \ttt) \subseteq \Mod(\ccc)$ of \emph{additive sheaves}, i.e. modules $F \in \Mod(\ccc)$ such that every cover $R \subseteq \ccc(-,C)$ in $\ttt(C)$ induces a bijection
$$F(C) \cong \Mod(\ccc)(\ccc(-C), F) \lra \Mod(\ccc)(R, F).$$
Conversely any Grothendieck category $\aaa$ can be represented as an additive sheaf category for suitable choices of $\ccc$ (see \cite{lowen1}), the easiest choice for $\ccc$ being a full generating subcategory as in the Gabriel-Popescu theorem \cite{popescugabriel}.

\subsection{Single morphism topologies}
Let $\ccc$ be a small (resp. small $\Z$-linear) category and $\Lambda$ a collection of $\ccc$-morphisms. We define a subfunctor (resp. a submodule) $R \subseteq \ccc(-,C)$ to be a \emph{$\Lambda$-cover} if $R$ (considered as a sieve) contains a morphism $\lambda \in \Lambda$. If the $\Lambda$-covers define a topology $\ttt_{\Lambda}$ (resp. an additive topology $\ttt_{\Lambda}^{\mathrm{add}}$) on $\ccc$, then this topology is called the \emph{single $\Lambda$-topology} (resp. the \emph{additive single $\Lambda$-topology}). 

Let us now spell out what it means for $F \in \Fun(\ccc^{\op}, \Set)$ to be a sheaf for $\ttt_{\Lambda}$. For $\lambda: D \lra C$ in $\Lambda$, a compatible family of elements with respect to the cover $\langle \lambda \rangle$ generated by $\lambda$ corresponds to an element $x \in F(D)$ such that for every commutative diagram
$$\xymatrix{ D \ar[r]^{\lambda} & C \\ E \ar[u]^{\alpha_1} \ar[r]_{\alpha_2} & D \ar[u]_{\lambda}}$$
we have $F(\alpha_1)(x) = F(\alpha_2)(x)$. Hence, the sheaf property with respect to $\lambda$ says that for such an element $x \in F(D)$ there is a unique element $y \in F(C)$ with $F(\lambda)(y) = x$.

Recall that a filtered colimit $\colim_i F_i$ is called \emph{monofiltered} if all the transition morphisms $F_i \lra F_j$ are monomorphisms.

\begin{lemma}\label{moncolim}
A monofiltered colimit of sheaves (in $\Fun(\ccc^{\op}, \Set)$)  remains a sheaf.
\end{lemma}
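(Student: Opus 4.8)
\emph{Proof strategy.} The plan is to reduce the sheaf property to the principal covers $\langle \lambda \rangle$, $\lambda \in \Lambda$, and then to exploit that, for a monofiltered colimit, each of the canonical morphisms into the colimit is a monomorphism.

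First I would record that colimits in $\Fun(\ccc^{\op}, \Set)$ are computed objectwise, and that in $\Set$ the canonical maps from the terms of a filtered diagram of injections to its colimit are again injective; hence, writing $F = \colim_i F_i$, each canonical morphism $F_i \to F$ is a monomorphism, i.e. $F_i(E) \to F(E)$ is injective for every $E \in \ccc$. We may therefore regard the $F_i$ as subpresheaves of $F$, with $F = \bigcup_i F_i$. Since $\ttt_{\Lambda}$ is a single morphism topology, it suffices (as usual; I return to this at the end) to verify the sheaf condition for the principal covering sieves $\langle \lambda \rangle$ with $\lambda \colon D \to C$ in $\Lambda$; by the explicit description recalled above, this amounts to showing that for every $x \in F(D)$ with $F(\alpha_1)(x) = F(\alpha_2)(x)$ whenever $\alpha_1, \alpha_2 \colon E \to D$ satisfy $\lambda\alpha_1 = \lambda\alpha_2$, there is a unique $y \in F(C)$ with $F(\lambda)(y) = x$.

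For uniqueness, given $y, y' \in F(C)$ with $F(\lambda)(y) = F(\lambda)(y')$, I would use filteredness to choose an index $i$ with $y, y' \in F_i(C)$; since $F_i(D) \to F(D)$ is injective this forces $F_i(\lambda)(y) = F_i(\lambda)(y')$, hence $y = y'$ because $F_i$ is separated. For existence, pick $i$ with $x \in F_i(D)$. The key observation is that $x$ already satisfies the compatibility condition inside $F_i$: for each pair $\alpha_1, \alpha_2 \colon E \to D$ with $\lambda\alpha_1 = \lambda\alpha_2$, the elements $F_i(\alpha_1)(x)$ and $F_i(\alpha_2)(x)$ of $F_i(E)$ have the same image $F(\alpha_1)(x) = F(\alpha_2)(x)$ in $F(E)$, hence coincide since $F_i(E) \to F(E)$ is injective. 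As $F_i$ is a sheaf, there is then $y \in F_i(C) \subseteq F(C)$ with $F(\lambda)(y) = x$, which settles this case.

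It remains to deduce the sheaf condition for an arbitrary cover $R$ of $C$ from the principal ones, by the standard argument: $R$ contains some $\langle \lambda \rangle$, a compatible family for $R$ restricts to a compatible family for $\langle \lambda \rangle$ and so yields $y \in F(C)$ as above, and for any $f \colon D' \to C$ in $R$ one checks $F(f)(y) = x_f$ by restricting both sides along the covering sieve $f^{-1}\langle \lambda \rangle$ on $D'$ and using separatedness of $F$ (already established) with respect to a single morphism it contains. The main obstacle---indeed the only real one---is the ``key observation'' above: it is exactly where the monofiltered hypothesis enters, and it genuinely fails for a plain filtered colimit, where one could only arrange $F_j(\alpha_1)(x) = F_j(\alpha_2)(x)$ for indices $j$ depending on the chosen pair, with no uniform $j$ serving all pairs at once. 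Everything else is routine bookkeeping with coverages and with filtered colimits of sets.
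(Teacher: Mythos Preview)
Your proof is correct and follows essentially the same approach as the paper: both reduce to the principal covers $\langle\lambda\rangle$, pick a representative of the compatible element at some index $i$, and use injectivity of $F_i(E)\to F(E)$ (the monofiltered hypothesis) to conclude that the representative is already compatible in $F_i$, whence the sheaf property of $F_i$ supplies the required glueing. The only difference is that you spell out the passage from principal covers to arbitrary $\Lambda$-covers, which the paper leaves implicit.
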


\begin{proof}
Consider a monofiltered colimit $\colim_i F_i$ of sheaves $F_i$ and $\lambda: D \lra C$ in $\Lambda$. Suppose $x \in \colim_i F_i(D)$ is compatible and let $x_i \in F_i(D)$ be a representative of $x$. Consider $\alpha_1, \alpha_2: E \lra D$ with $\lambda \alpha_1 = \lambda \alpha_2$. Now $F_i(\alpha_1)(x_i)$ and $F_i(\alpha_2)(x_i)$ become equal in $\colim_i F_i(E)$, but since this colimit is monofiltered, we obtain $F_i(\alpha_1)(x_i) = F_i(\alpha_2)(x_i)$in $F_i(E)$. Hence, $x_i$ is compatible and there exists $y_i \in F_i(C)$ with $F(\lambda)(y_i) = x_i$. Furthermore, if $y, z \in \colim_i F_i(C)$ become equal in $\colim_i F_i(D)$, appropriate representatives $y_i, z_i \in F_i(C)$ become equal in $F_i(D)$, and hence $y_i = z_i$ and $y = z$.
\end{proof}

If $\ccc$ is small $\Z$-linear, it makes sense to consider both $\ttt_{\Lambda}$ and $\ttt_{\Lambda}^{\mathrm{add}}$ on $\ccc$. The subfunctors $R = \langle \lambda \rangle \subseteq \ccc(-,C)$ of morphisms factoring through a given $\lambda \in \Lambda$ are additive (whence submodules) and constitute a basis for both $\ttt_{\Lambda}$ and $\ttt_{\Lambda}^{\mathrm{add}}$.

We are mainly interested in sheaves taking values in the category $\Ab$ of abelian groups.
Consider $\Fun(\ccc) = \Fun(\ccc^{\op}, \Ab)$, the category  $\Sh_{\Lambda}(\ccc) = \Sh(\ccc, \ttt_{\Lambda}) \subseteq \Fun(\ccc)$ of (non-additive) sheaves of abelian groups on $\ccc$, $\Mod(\ccc) = \Add(\ccc^{\op}, \Ab)$ and the category $\Sh_{\Lambda}^{\mathrm{add}}(\ccc) =\Sh^{\mathrm{add}}(\ccc, \ttt_{\Lambda}^{\mathrm{add}}) \subseteq \Mod(\ccc)$ of additive sheaves on $\ccc$. By the previous observations, we have
$$\Sh_{\Lambda}^{\mathrm{add}}(\ccc) = \Sh_{\Lambda}(\ccc) \cap \Mod(\ccc).$$

Recall that an object $A$ in a category $\aaa$ is \emph{finitely generated} if $\aaa(A,-): \aaa \lra \Set$ commutes with monofiltered colimits.
We have the following natural source of single morphism topologies:

\begin{proposition}\label{evgen}
Let $\aaa$ be a Grothendieck category and $\ccc \subseteq \aaa$ a small full additive subcategory.
The following are equivalent:
\begin{enumerate}
\item The objects of $\ccc$ are finitely generated generators of $\aaa$.
\item $\Lambda = \{ \lambda \in \ccc \,\, |\,\, \lambda \,\, \text{is an epimorphism in}\,\, \aaa \}$ defines an additive single $\Lambda$-topology $\ttt_{\Lambda}^{\mathrm{add}}$ on $\ccc$ with
$$\Sh(\ccc, \ttt_{\Lambda}^{\mathrm{add}}) \cong \aaa.$$
\end{enumerate}
\end{proposition}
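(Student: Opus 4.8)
The plan is to reduce the whole statement to the Gabriel--Popescu theorem. Recall that once the objects of $\ccc$ generate $\aaa$, the restricted Yoneda functor $c^{*}_{\ccc}\colon\aaa\lra\Mod(\ccc)$, $A\mapsto\aaa(-,A)|_{\ccc}$, is fully faithful, its left adjoint $c_{\ccc!}$ (the left Kan extension of the inclusion $\ccc\hookrightarrow\aaa$, which restricts to the identity on $\ccc$) is exact, and $c^{*}_{\ccc}$ identifies $\aaa$ with $\Sh^{\add}(\ccc,\ttt)$, where $R\subseteq\ccc(-,C)$ lies in $\ttt(C)$ exactly when the morphism $c_{\ccc!}(R)\to c_{\ccc!}(\ccc(-,C))=C$ is an isomorphism (exactness of $c_{\ccc!}$ makes this morphism a monomorphism and identifies its target with $C$). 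I will read ``$\Sh(\ccc,\ttt^{\add}_{\Lambda})\cong\aaa$'' in $(2)$ as the statement that $c^{*}_{\ccc}$ restricts to such an equivalence; this is the only sensible reading, since $\ttt^{\add}_{\Lambda}$ is manufactured out of the given embedding $\ccc\subseteq\aaa$ and the abstract equivalence type of $\Sh^{\add}(\ccc,\ttt^{\add}_{\Lambda})$ alone cannot remember that embedding. With these conventions the content of the proposition is that \emph{finite generation of the objects of $\ccc$ is exactly what forces $\ttt$ to be the single $\Lambda$-topology}. The one computation I would set up first: writing $R$ as the colimit of representables $\ccc(-,D_{f})$ indexed by the category of its elements, that is by the morphisms $f\colon D_{f}\to C$ belonging to $R$, exactness of $c_{\ccc!}$ gives $c_{\ccc!}(R)=\sum_{f\in R}\mathrm{im}(f)$ as a subobject of $C$; hence $R\in\ttt(C)$ if and only if the family $(f)_{f\in R}$ is jointly epimorphic in $\aaa$.

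For $(1)\Rightarrow(2)$: since the objects of $\ccc$ generate $\aaa$, Gabriel--Popescu applies and $c^{*}_{\ccc}$ gives $\aaa\cong\Sh^{\add}(\ccc,\ttt)$, so it suffices to show $\ttt=\ttt^{\add}_{\Lambda}$. If $R$ contains the sieve $\langle\lambda\rangle$ of some $\lambda\colon D\to C$ in $\Lambda$, then $\langle\lambda\rangle$ is the image of $\ccc(-,D)\xrightarrow{\lambda\circ-}\ccc(-,C)$, so $c_{\ccc!}(\langle\lambda\rangle)=\mathrm{im}(\lambda)=C$ (as $\lambda$ is an epimorphism in $\aaa$), a fortiori $c_{\ccc!}(R)=C$, i.e. $R\in\ttt(C)$. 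Conversely let $R\in\ttt(C)$, so $\sum_{f\in R}\mathrm{im}(f)=C$. The subobjects $\sum_{f\in J}\mathrm{im}(f)$ with $J\subseteq R$ finite form a directed family with union $C$, so finite generation of $C$ produces a finite $J$ with $\sum_{f\in J}\mathrm{im}(f)=C$. Since $\ccc$ is a full additive subcategory of $\aaa$ it contains $D_{J}:=\bigoplus_{f\in J}D_{f}$, and $\lambda:=(f)_{f\in J}\colon D_{J}\to C$ is then epimorphic, hence $\lambda\in\Lambda$; moreover $\lambda=\sum_{f\in J}f\circ\pi_{f}$ lies in $R$ because a sieve is closed under precomposition and a submodule under sums, so $\langle\lambda\rangle\subseteq R$. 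Thus $R$ is a $\Lambda$-cover, the $\Lambda$-covers coincide with the covers of $\ttt$, in particular they form a topology with $\ttt^{\add}_{\Lambda}=\ttt$, and $\Sh^{\add}(\ccc,\ttt^{\add}_{\Lambda})\cong\aaa$.

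For $(2)\Rightarrow(1)$: assume $c^{*}_{\ccc}$ restricts to an equivalence $\aaa\cong\Sh^{\add}(\ccc,\ttt^{\add}_{\Lambda})$. For $C\in\ccc$ fullness gives $c^{*}_{\ccc}(C)=\ccc(-,C)$, so each representable is a sheaf; the representables generate $\Mod(\ccc)$, and as $\Sh^{\add}(\ccc,\ttt^{\add}_{\Lambda})\hookrightarrow\Mod(\ccc)$ is a reflective full subcategory (so that the inclusion preserves monomorphisms and reflects isomorphisms), they generate $\Sh^{\add}(\ccc,\ttt^{\add}_{\Lambda})$ as well; transporting along $c^{*}_{\ccc}$, the objects of $\ccc$ generate $\aaa$. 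For finite generation, I would note that the proof of Lemma~\ref{moncolim}, applied to underlying presheaves of sets, goes through verbatim for additive sheaves: a monofiltered colimit of additive sheaves formed in $\Mod(\ccc)$ is again a sheaf, hence equals the colimit formed in $\Sh^{\add}(\ccc,\ttt^{\add}_{\Lambda})$. Consequently, for a monofiltered system $(F_{i})$ of sheaves,
$$\Hom(\ccc(-,C),\colim_{i}F_{i})\cong\bigl(\colim_{i}F_{i}\bigr)(C)=\colim_{i}F_{i}(C)\cong\colim_{i}\Hom(\ccc(-,C),F_{i}),$$
the $\Hom$'s being taken in the sheaf category (equivalently in $\Mod(\ccc)$, since all objects involved are sheaves). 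Hence $\ccc(-,C)$ is finitely generated in $\Sh^{\add}(\ccc,\ttt^{\add}_{\Lambda})$, and transporting along $c^{*}_{\ccc}$ makes $C$ finitely generated in $\aaa$.

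The only genuinely substantial input is the exactness of $c_{\ccc!}$, i.e. the Gabriel--Popescu theorem (\cite{popescugabriel}, \cite{lowen1}), which I would cite rather than reprove. Granting that, the crux is the compactness argument inside $(1)\Rightarrow(2)$ repackaging a jointly epimorphic family as a single epimorphism out of an object of $\ccc$ --- this is the one and only place where finite generation of the objects of $\ccc$ is used, and it is precisely the hypothesis that a single morphism topology demands. Checking that the $\Lambda$-covers really do form a topology then comes for free from the identification with the localizing topology $\ttt$. The remaining point calling for care is not technical but interpretative: fixing the meaning of ``$\cong$'' in $(2)$ as ``via $c^{*}_{\ccc}$'', without which the direction $(2)\Rightarrow(1)$ would fail.
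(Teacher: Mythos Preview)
Your proof is correct and follows essentially the same route as the paper's. Both directions rest on the Gabriel--Popescu theorem to produce the canonical topology $\ttt$ on $\ccc$, then identify $\ttt$ with $\ttt^{\mathrm{add}}_{\Lambda}$ by using finite generation to collapse a jointly epimorphic family in a cover $R$ to a single epimorphism $\oplus_{f\in J}D_{f}\to C$ lying in $R$ (your $\lambda=\sum f\circ\pi_{f}$ is exactly the paper's $f=\sum f_{i}$); and for $(2)\Rightarrow(1)$ both invoke Lemma~\ref{moncolim} to see that representables stay finitely generated after passing from $\Mod(\ccc)$ to the sheaf category. Your write-up is more explicit in two places---the computation $c_{\ccc!}(R)=\sum_{f\in R}\mathrm{im}(f)$ and the interpretative remark that the equivalence in $(2)$ must be read as ``via restricted Yoneda''---but neither constitutes a different argument; the paper tacitly assumes the same interpretation when it says ``obviously $\ccc$ generates $\aaa$''.
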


\begin{proof}
If (1) holds, there is an additive topology $\ttt$ on $\ccc$ with $\Sh(\ccc, \ttt) \cong \ccc$ and for this topology $R \subseteq \ccc(-,C)$ is a cover if and only if $\oplus_{f \in R}C_f \lra C$ is an epimorphisms in $\aaa$. Since $C$ is finitely generated, there are finitely many morphisms $f_i: C_i \lra C$ in $R$, $i = 1, \dots n$, for which $f = \sum_{i = 1}^n f_i: \oplus_{i = 1}^n C_i \lra C$ is an epimorphism.  But since $R$ is an additive subfunctor, in fact $f \in R$. Conversely, suppose (2) holds. Obviously $\ccc$ generates $\aaa$, so we are to show that $C \in \ccc$ is finitely generated in $\Sh(\ccc, \ttt_{\Lambda}^{\mathrm{add}})$. This easily follows from the fact that $C$ is finitely generated in $\Mod(\ccc)$ and Lemma \ref{moncolim}.
\end{proof}

\begin{remark}\label{remsplit}
If all the morphisms $\lambda \in \Lambda$ of Proposition \ref{evgen} become split epimorphisms in $\aaa$, the topology $\ttt_{\Lambda}^{\mathrm{add}}$ is reduced to the trivial topology with $\Sh(\ccc, \ttt_{\Lambda}^{\mathrm{add}}) = \Mod(\ccc)$. This situation is equivalent to the objects in $\ccc \subseteq \aaa$ being finitely generated \emph{projective} generators in $\aaa$. \end{remark}

Often a collection $\Lambda$ can be directly seen to define single $\Lambda$-topologies:

\begin{proposition}\label{propcond}
Let $\ccc$ be a small category (resp. small $\Z$-linear category) and $\Lambda$ a collection of morphisms such that:
\begin{enumerate}
\item $\Lambda$ contains isomorphisms;
\item For $\lambda: D \lra C$ in $\Lambda$ and $f: C' \lra C$ arbitrary, the pullback $\lambda': D' \lra C'$ exists and is in $\Lambda$;
\item $\Lambda$ is stable under composition.
\end{enumerate}
Then $\Lambda$ defines a single $\Lambda$-topology (resp. an additive single $\Lambda$-topology) on $\ccc$.
\end{proposition}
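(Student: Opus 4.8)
The plan is to verify directly that the collection of $\Lambda$-covers satisfies the axioms of a (Grothendieck, resp. additive) topology, using conditions (1)--(3) as the engine for each axiom. Recall that a subfunctor (resp. submodule) $R \subseteq \ccc(-,C)$ is a $\Lambda$-cover precisely when some $\lambda \in \Lambda$ factors through $R$, equivalently (since $R$ is a sieve) when $\lambda \in R(D)$ for some $\lambda \colon D \lra C$ in $\Lambda$. It is cleanest to check the three axioms in the form: (i) the maximal sieve $\ccc(-,C)$ is a cover; (ii) stability under pullback along arbitrary $f \colon C' \lra C$; (iii) the local character / transitivity axiom. Axiom (i) is immediate from condition (1): an isomorphism $C \lra C$ lies in $\Lambda$ and certainly belongs to $\ccc(-,C)$.

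For axiom (ii), let $R \subseteq \ccc(-,C)$ be a $\Lambda$-cover, say witnessed by $\lambda \colon D \lra C$ in $\Lambda \cap R(D)$, and let $f \colon C' \lra C$ be arbitrary. First I would form the pullback $\lambda' \colon D' \lra C'$ of $\lambda$ along $f$, which exists and lies in $\Lambda$ by condition (2), together with the second projection $g \colon D' \lra D$ satisfying $\lambda g = f \lambda'$. Then I claim $\lambda' \in (f^{-1}R)(D')$: by definition of the pullback sieve, a morphism $h \colon E \lra C'$ lies in $f^{-1}R$ iff $fh \in R$; applying this to $h = \lambda'$ gives $f\lambda' = \lambda g \in R(D')$ because $\lambda \in R$ and $R$ is a sieve (closed under precomposition with $g$). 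Hence $\lambda'$ witnesses that $f^{-1}R$ is a $\Lambda$-cover of $C'$. In the $\Z$-linear case the same argument applies verbatim, the only point to note being that $f^{-1}R$ is again a submodule, which is automatic since pullback in $\Mod(\ccc)$ along an additive map preserves submodules.

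For axiom (iii), suppose $S \subseteq \ccc(-,C)$ is a $\Lambda$-cover and $R \subseteq \ccc(-,C)$ is a sieve such that $f^{-1}R$ is a $\Lambda$-cover of $D$ for every $f \in S(D)$; I must show $R$ is a $\Lambda$-cover of $C$. Choose $\lambda \colon D \lra C$ in $\Lambda \cap S(D)$. Applying the hypothesis to $f = \lambda \in S(D)$, the sieve $\lambda^{-1}R$ is a $\Lambda$-cover of $D$, so there is some $\mu \colon E \lra D$ in $\Lambda$ with $\mu \in (\lambda^{-1}R)(E)$, i.e. $\lambda\mu \in R(E)$. By condition (3), $\lambda \mu \in \Lambda$, and it lies in $R(E)$; therefore $R$ is a $\Lambda$-cover of $C$, as desired. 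This completes the verification of the topology axioms, and in the $\Z$-linear setting the identical computation (noting, as before, that all sieves involved are submodules and that $\langle \lambda \rangle$ is additive, as already observed in the text) shows that $\Lambda$ defines an additive single $\Lambda$-topology.

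The only genuine subtlety — the ``main obstacle'', such as it is — lies in axiom (iii): one must resist the temptation to pull back along an arbitrary element of $S$ and instead use the distinguished element $\lambda \in \Lambda \cap S$, so that composition-stability (condition (3)) can be applied to the resulting morphism $\lambda\mu$. Conditions (2) and (3) are each used exactly once, for pullback-stability and transitivity respectively, and (1) supplies the maximal cover; the bookkeeping for the additive case is routine given the remarks preceding the statement.
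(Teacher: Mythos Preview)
Your proof is correct. The paper states Proposition~\ref{propcond} without proof, so there is nothing to compare against; your argument supplies exactly the routine verification the authors omit, matching conditions (1), (2), (3) to the maximal-sieve, pullback-stability, and transitivity axioms respectively.
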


Suppose $\Lambda$ determines single morphism topologies $\ttt_{\Lambda}$ and $\ttt_{\Lambda}^{\mathrm{add}}$. The inclusions $i': \Sh_{\Lambda}(\ccc) \subseteq \Fun(\ccc)$ and $i:  \Sh_{\Lambda}^{\mathrm{add}}(\ccc) \subseteq \Mod(\ccc)$ have exact left adjoint sheafification functors $a': \Fun(\ccc) \lra \Sh_{\Lambda}(\ccc)$ and $a: \Mod(\ccc) \lra \Sh_{\Lambda}(\ccc)$ respectively. 

\begin{definition}
A functor $F \in \Fun(\ccc)$ is \emph{weakly $\Lambda$-effaceable} if and only if for every $C \in \ccc$ and every $x \in F(C)$, there exists a morphism $\lambda: C' \lra C$ in $\Lambda$ with $F(\lambda)(x) = 0$.
\end{definition}
Let $\www_{\Lambda} \subseteq \Fun(\ccc)$ be the full subcatgory of weakly $\Lambda$-effaceable functors, and $\www^{\mathrm{add}}_{\Lambda} \subseteq \Mod(\ccc)$ the full subcategory of weakly $\Lambda$-effaceable modules. Clearly $\www^{\mathrm{add}}_{\Lambda} = \www_{\Lambda} \cap \Mod(\ccc)$. From the concrete formulae for sheafification and the fact that the $\langle \lambda \rangle$ constitute a basis for $\ttt_{\Lambda}$ and $\ttt_{\Lambda}^{\mathrm{add}}$, it follows that:
$$\www_{\Lambda} = \Kern(a) \hspace{2cm} \www^{\mathrm{add}}_{\Lambda} = \Kern(a').$$
In particular, $\www_{\Lambda}$ and $\www^{\mathrm{add}}_{\Lambda}$ are localizing Serre subcategories of $\Fun(\ccc)$ and $\Mod(\ccc)$ respectively, and
$$\Sh_{\Lambda}(\ccc) = \www_{\Lambda}^{\perp} \hspace{2cm} \Sh^{\mathrm{add}}_{\Lambda}(\ccc) = (\www_{\Lambda}^{\mathrm{add}})^{\perp}$$
where $F \in \www^{\perp} \iff [\forall\,\, W \in \www: \,\,\, \Hom(W,F) = 0 = \Ext^1(W,F)]$ (see for example \cite{krause3}).

We obtain commutative diagrams:
\begin{equation}\label{eqdiag}
\xymatrix{{\Mod(\ccc)} \ar[r]^j \ar@/_10pt/[d]_{a} & {\Fun(\ccc)} \ar@/_10pt/[d]_{a'}\\
{\Sh^{\mathrm{add}}_{\Lambda}(\ccc)} \ar[u]_{i} \ar[r]_{j'} & {\Sh_{\Lambda}(\ccc).} \ar[u]_{i'}}
\end{equation}

\begin{lemma}
In the above diagram, $j'$ is an exact functor.
\end{lemma}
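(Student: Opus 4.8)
The plan is to show that $j'$ preserves kernels and cokernels, which suffices since $\Sh^{\mathrm{add}}_{\Lambda}(\ccc)$ and $\Sh_{\Lambda}(\ccc)$ are abelian. Under the identifications of~\eqref{eqdiag} the functor $j'$ is simply the inclusion of the full subcategory $\Sh^{\mathrm{add}}_{\Lambda}(\ccc) = \Sh_{\Lambda}(\ccc) \cap \Mod(\ccc)$, and the argument will use only the commutativities $i'j' = ji$ and $j'a = a'j$, the full faithfulness of $i$, $i'$ and $j$, and the fact that $a \dashv i$ and $a' \dashv i'$; notably it will \emph{not} use the exactness of $a$ or $a'$.

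The one substantive step is the observation that $j \colon \Mod(\ccc) \to \Fun(\ccc)$ is exact. This holds because kernels and cokernels in both categories are computed objectwise in $\Ab$, and the objectwise kernel and cokernel of a morphism $f$ of additive functors is again additive: for the cokernel one uses that, $M$ and $N$ being additive, the component $f_{C\oplus D}$ is diagonal with respect to the biproduct decompositions $M(C\oplus D)\cong M(C)\oplus M(D)$ and $N(C\oplus D)\cong N(C)\oplus N(D)$, so its cokernel splits accordingly. Equivalently, $\Mod(\ccc)$ is a full abelian subcategory of $\Fun(\ccc)$ closed under subobjects and quotients. I expect this — or rather, the surrounding bookkeeping about how (co)limits in the four categories of~\eqref{eqdiag} relate — to be the only real obstacle; everything after it is formal.

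For kernels, note that $i$ and $i'$ are right adjoints and hence preserve limits, so kernels in $\Sh^{\mathrm{add}}_{\Lambda}(\ccc)$ and in $\Sh_{\Lambda}(\ccc)$ are computed in $\Mod(\ccc)$ and $\Fun(\ccc)$ respectively. Then for a morphism $f$ of additive sheaves one chases
\[
i'\bigl(j'\,\Kern f\bigr)\;\cong\;j\,i\,\Kern f\;\cong\;j\,\Kern(if)\;\cong\;\Kern\bigl(j\,if\bigr)\;\cong\;\Kern\bigl(i'\,j'f\bigr)\;\cong\;i'\bigl(\Kern(j'f)\bigr),
\]
the middle isomorphism being exactness of $j$, and the full faithfulness of $i'$ lets one conclude $j'(\Kern f)\cong\Kern(j'f)$ compatibly with the canonical comparison map; thus $j'$ is left exact.

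For cokernels one argues dually: $\Sh^{\mathrm{add}}_{\Lambda}(\ccc)$ and $\Sh_{\Lambda}(\ccc)$ are reflective subcategories, so cokernels in them are obtained by applying the reflector $a$, resp. $a'$, to the ambient cokernel. Hence for $f$ as above,
\[
j'\,\Cokern f\;\cong\;j'\,a\,\Cokern(if)\;\cong\;a'\,j\,\Cokern(if)\;\cong\;a'\,\Cokern\bigl(j\,if\bigr)\;\cong\;a'\,\Cokern\bigl(i'\,j'f\bigr)\;\cong\;\Cokern(j'f),
\]
again using exactness of $j$ and then checking that the composite is the canonical comparison map. Therefore $j'$ preserves cokernels, and combined with the previous paragraph, $j'$ is exact.
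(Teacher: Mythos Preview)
Your proof is correct and is essentially the same argument as the paper's, reorganized. The paper works directly with a short exact sequence $0\to A\to B\to C\to 0$ in $\Sh^{\mathrm{add}}_{\Lambda}(\ccc)$: it completes $0\to i(A)\to i(B)\to i(C)$ to a four-term exact sequence in $\Mod(\ccc)$ with cokernel $M$, observes $a(M)=0$, pushes through the exact functor $j$, and uses $a'j(M)=j'a(M)=0$ to conclude $0\to j'(A)\to j'(B)\to j'(C)\to 0$ is exact. Your version splits this into separate kernel and cokernel checks, but the content is identical: exactness of $j$, the two commutativities $i'j'=ji$ and $j'a=a'j$, and the fact that (co)kernels in the sheaf categories are computed via $i,i'$ (resp.\ $a,a'$). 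Your remark that left-exactness of $a$ is not needed is correct, and in fact the paper's invocation of ``$a$ exact'' also only uses right exactness (to get $a(M)=0$ from $B\to C\to a(M)\to 0$), which is automatic for a left adjoint.
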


\begin{proof}
Consider an exact sequence $0 \lra A \lra B \lra C \lra 0$ in $\Sh^{\mathrm{add}}_{\Lambda}(\ccc)$. In particular, $0 \lra i(A) \lra i(B) \lra i(C)$ is exact in $\Mod(\ccc)$ and we can complete it into an exact sequence $0 \lra i(A) \lra i(B) \lra i(C) \lra M \lra 0$ in $\Mod(\ccc)$. Since $a$ is exact, this implies $a(M) = 0$, or in other words $M \in \www_{\Lambda}^{\mathrm{add}}$. But $j$, being obviously exact, maps this sequence to the exact sequence $0 \lra ji(A) \lra ji(B) \lra ji(C) \lra j(M) \lra 0$ in $\Fun(\ccc)$. Since $a'(j(M)) = j'(a(M)) = 0$ we have an exact sequence $0 \lra j'(A) \lra j'(B) \lra j'(C) \lra 0$ in $\Sh_{\Lambda}(\ccc)$ as desired.
\end{proof}

\begin{remark}
Note that the inclusion $j: \Mod(\ccc) \lra \Fun(\ccc)$ has a left adjoint ``additivization'' functor which is not exact. Consequently, it is impossible to express additivity of functors by means of a topology on $\ccc$.
\end{remark}

\subsection{Additive sheaves inside non-additive sheaves}
Let $\ccc$ be a small additive category. It is well known that the inclusion 
$$j: \Mod(\ccc) \subseteq \Fun(\ccc)$$
is an exact embedding and a Serre subcategory (see e.g. \cite{pirashvili} and the references therein). In this section we extend the result to the inclusion 
$$j': \Sh^{\mathrm{add}}(\ccc) \lra \Sh(\ccc)$$
in case $\Lambda$ determines single morphism topologies $\ttt$ and $\ttt^{\mathrm{add}}$ on $\ccc$ (we suppress $\Lambda$ in all notations). 
The ingredients of the proof are well known, but we include them for completeness.

 
We start with the following observation:

\begin{lemma}\label{lemaddext}
The inclusion $j: \Mod(\ccc) \lra \Fun(\ccc)$ is an exact embedding which is closed under extensions. 
\end{lemma}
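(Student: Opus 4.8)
The statement has two parts: that $j$ is an exact fully faithful functor, and that the essential image is closed under extensions in $\Fun(\ccc)$. The first part is classical: $j$ is fully faithful because an additive natural transformation between additive functors is the same as a natural transformation of the underlying functors, and it is exact because (co)limits in both $\Mod(\ccc)$ and $\Fun(\ccc)$ are computed objectwise in $\Ab$, so a sequence of modules is exact in $\Mod(\ccc)$ iff it is exact in $\Fun(\ccc)$. The real content is closure under extensions, so I would spend the bulk of the argument there.

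For closure under extensions, the plan is as follows. Suppose
$$0 \lra M' \lra F \lra M'' \lra 0$$
is a short exact sequence in $\Fun(\ccc)$ with $M', M'' \in \Mod(\ccc)$, i.e. additive. I want to show $F$ is additive, meaning that for all $C, D \in \ccc$ the canonical map $F(C \oplus D) \lra F(C) \times F(D)$ induced by the two projections is an isomorphism (and also that $F(0) = 0$, which follows similarly from the zero object being initial and terminal). Since $\ccc$ is additive, the biproduct diagram $C \xrightarrow{i_C} C\oplus D \xrightarrow{p_C} C$ etc. gives, for any functor $F$ to $\Ab$, a split exact structure; in particular the projections $p_C, p_D$ have sections $i_C, i_D$, so $F(p_C), F(p_D)$ are (split) surjective and $F(C\oplus D) \lra F(C)\times F(D)$ is always split surjective. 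The issue is injectivity. Here I would use the snake lemma on the map of short exact sequences obtained by applying the "evaluate at $C\oplus D$" sequence versus the "evaluate at $C$ times evaluate at $D$" sequence: we get a commutative diagram with exact rows
$$\xymatrix{0 \ar[r] & M'(C\oplus D) \ar[r]\ar[d] & F(C\oplus D) \ar[r]\ar[d] & M''(C\oplus D) \ar[r]\ar[d] & 0\\ 0 \ar[r] & M'(C)\times M'(D) \ar[r] & F(C)\times F(D) \ar[r] & M''(C)\times M''(D) \ar[r] & 0}$$
where the outer two vertical maps are isomorphisms because $M', M''$ are additive, and all three vertical maps are split surjective by the biproduct remark; the bottom row is exact because products are exact in $\Ab$ and $M', M''$ are additive. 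The snake lemma then forces the middle vertical map to be an isomorphism as well. One should check the bottom row really is the evaluation of $0 \to M' \to F \to M'' \to 0$ at $C$ and at $D$ taken in product, which is immediate since $M', M''$ are additive so their value at $C\oplus D$ agrees with the product of values; more carefully, the map $F(C\oplus D)\to F(C)\times F(D)$ is natural in the short exact sequence, giving the commuting diagram directly. Finally the condition $F(0)=0$ follows because $0$ is a biproduct of the empty family, or directly: $F(0) \to F(0)\times F(0)$ (empty product is the zero group) — alternatively $\id_0 = 0$ in $\ccc$ forces $\id_{F(0)} = 0$ so $F(0)=0$; and preservation of finite biproducts plus $F(0)=0$ is exactly additivity of $F$.

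The step I expect to be the main (though still mild) obstacle is making the snake-lemma diagram fully rigorous: one must be careful that the comparison map $F(C\oplus D)\to F(C)\times F(D)$ assembles into an honest morphism of short exact sequences with exact rows, and that the known additivity of $M'$ and $M''$ indeed identifies the outer terms with $M'(C\oplus D)$ and $M''(C\oplus D)$. Once the diagram is in place, the conclusion is a one-line diagram chase (the split surjectivity guarantees the relevant connecting map lands in a situation where "iso on both ends $+$ epi in the middle $\Rightarrow$ iso in the middle"; even more simply, a morphism of short exact sequences that is an iso on the sub and quotient is an iso on the total object, by the five lemma, so I don't even need the splitting for injectivity — the splitting is only needed to know $F(C\oplus D)\to F(C)\times F(D)$ is the correct map sitting over the identity on ends). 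I would present it via the five lemma to keep it short. Exactness and full faithfulness of $j$ can be dispatched in a sentence each, citing the objectwise computation of kernels and cokernels.
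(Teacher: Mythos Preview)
Your approach is exactly the paper's: the paper's proof says only that full faithfulness holds because $\ccc$ is additive and that closure under extensions ``easily follows from the 5-lemma,'' which is precisely the diagram you wrote down and the conclusion you drew. One small slip worth cleaning up: the claim ``$\id_0 = 0$ in $\ccc$ forces $\id_{F(0)} = 0$'' presupposes that $F$ preserves zero morphisms, which is part of what you are proving; the honest argument for $F(0)=0$ is simply that $0 \to M'(0) \to F(0) \to M''(0) \to 0$ is exact with $M'(0)=M''(0)=0$, or equivalently the $C=D=0$ instance of your five-lemma diagram.
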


\begin{proof}
Since $\ccc$ is an additive category, $j$ is fully faithful. That $\Mod(\ccc)$ is closed in $\Fun(\ccc)$ under extensions easily follows from the 5-lemma. 
\end{proof}

Next we extend Lemma \ref{lemaddext} to sheaves:

\begin{proposition}\label{ext1}
The inclusion $j': \Sh^{\add}(\ccc) \lra \Sh(\ccc)$ is an exact embedding which is closed under extensions.
\end{proposition}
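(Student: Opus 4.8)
The plan is to bootstrap from what is already available: that $j\colon\Mod(\ccc)\lra\Fun(\ccc)$ is a fully faithful exact embedding which is closed under extensions (Lemma~\ref{lemaddext}) and is in fact a Serre subcategory, that we have already seen $j'$ to be exact, and that $\Sh^{\add}(\ccc)=\Sh(\ccc)\cap\Mod(\ccc)$ as full subcategories of $\Fun(\ccc)$.

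First I would dispose of full faithfulness of $j'$: the square $i'j'=ji$ in \eqref{eqdiag} commutes, the functors $i'$, $j$, $i$ are fully faithful, and $\Sh^{\add}(\ccc)$, $\Sh(\ccc)$ are full subcategories of $\Mod(\ccc)$, $\Fun(\ccc)$ respectively, so $j'$ is nothing but the restriction of $j$ and is therefore fully faithful; together with exactness (already proved) this reduces the proposition to showing that the essential image of $j'$, namely $\Sh^{\add}(\ccc)$, is closed under extensions inside $\Sh(\ccc)$.

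So suppose $0\lra A\lra B\lra C\lra 0$ is exact in $\Sh(\ccc)$ with $A$ and $C$ additive sheaves. Since $B$ is in any case a sheaf and $\Sh^{\add}(\ccc)=\Sh(\ccc)\cap\Mod(\ccc)$, it is enough to prove that $i'(B)\in\Fun(\ccc)$ is additive. The left-exact functor $i'$ turns the sequence into an exact sequence $0\lra i'(A)\lra i'(B)\lra i'(C)$ in $\Fun(\ccc)$, and by $i'j'=ji$ the outer terms $i'(A)=j(iA)$ and $i'(C)=j(iC)$ are additive. Now comes the one delicate point: $B\lra C$ is epic in $\Sh(\ccc)$ but need not be epic in $\Fun(\ccc)$ (only its sheafification is), so the displayed sequence is not yet short exact. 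I would repair this by passing to $C':=\Beeld_{\Fun(\ccc)}\bigl(i'(B)\lra i'(C)\bigr)$, giving a genuine short exact sequence $0\lra i'(A)\lra i'(B)\lra C'\lra 0$ in $\Fun(\ccc)$. Since $C'$ is a subobject of the additive functor $i'(C)$ and $\Mod(\ccc)$ is a Serre subcategory of $\Fun(\ccc)$, $C'$ is again additive; closure under extensions (Lemma~\ref{lemaddext}) then forces $i'(B)\in\Mod(\ccc)$, i.e.\ $B\in\Sh(\ccc)\cap\Mod(\ccc)=\Sh^{\add}(\ccc)$, which is exactly what we want.

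The main obstacle is precisely the failure of $i'$ to be right exact: a short exact sequence in the localization $\Sh(\ccc)$ re-enters $\Fun(\ccc)$ only as a left-exact sequence, so one must not apply the extension-closure of $j$ to the naive image sequence but to the corrected one with the honest image $C'$ in the cokernel slot. Everything else is formal manipulation of \eqref{eqdiag} together with full faithfulness; and if one prefers to avoid quoting the Serre property, the additivity of $C'$ can be checked by hand, a subfunctor of an additive functor being again additive because the element of $i'(C)(X\oplus Y)$ determined by a pair in $C'(X)\oplus C'(Y)$ is reconstructed from that pair by functoriality and hence already lies in $C'(X\oplus Y)$.
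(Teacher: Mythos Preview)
Your argument is correct, but it follows a different route from the paper's. The paper works with the four-term exact sequence $0 \lra F' \lra F \lra F'' \lra W \lra 0$ in $\Fun(\ccc)$ with $W$ weakly effaceable, and verifies additivity of $F$ directly: Lemma~\ref{Wnul} gives $W(0)=0$ and hence $F(0)=0$, Lemma~\ref{nuloplus} then makes the canonical map $F(A\oplus B)\lra F(A)\oplus F(B)$ a split epimorphism, and a diagram chase against the isomorphisms for $F'$ and $F''$ makes it a monomorphism. Your approach instead truncates to the honest short exact sequence $0\lra i'(A)\lra i'(B)\lra C'\lra 0$ in $\Fun(\ccc)$, observes that $C'$, being a subfunctor of the additive $i'(C)$, is itself additive, and then invokes Lemma~\ref{lemaddext}. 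This is cleaner and avoids the two auxiliary lemmas entirely; the price is that you need closure of $\Mod(\ccc)\subseteq\Fun(\ccc)$ under subobjects, which in the paper's internal ordering is only recorded in Theorem~\ref{lexserre}, \emph{after} Proposition~\ref{ext1}. There is no genuine circularity---the subobject part of Theorem~\ref{lexserre} for $j$ is proved independently of Proposition~\ref{ext1}, and the paper itself quotes it as well known---but you were right to supply the direct check at the end. That direct check is fine: for a subfunctor $G\subseteq F$ with $F$ additive one has $G(0)\subseteq F(0)=0$, and for $z\in G(X\oplus Y)$ the identity $z=F(s_Xp_X)(z)+F(s_Yp_Y)(z)$ already lies in $G(X\oplus Y)$ while $(G(p_X)(z),G(p_Y)(z))\in G(X)\oplus G(Y)$, so the canonical map is bijective.
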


\begin{proof}
Consider an exact sequence $0 \lra F' \lra F \lra F'' \lra 0$ in $\Sh(\ccc)$ with $F', F'' \in \Sh^{\add}(\ccc)$. This means that we have an exact sequence 
$$0 \lra F' \lra F \lra F'' \lra W \lra 0$$
in $\Fun(\ccc)$ in which $F, F''$ are additive and $W$ is weakly effaceable. We are to show that $F$ is additive. 
By Lemma \ref{Wnul}, $W(0) = 0$ and hence also $F(0) = 0$. It remains to show that for $A, B \in \ccc$, the canonical map $$\eta: F(A \oplus B) \lra F(A) \oplus F(B)$$ is an isomorphism.
By Lemma \ref{nuloplus}, $\eta$ is an epimorphism. Furthermore, from the diagram
$$\xymatrix{ 0 \ar[r] & {F'(A \oplus B)} \ar[r] \ar[d]_{\cong} & {F(A \oplus B)} \ar[r] \ar[d]_{\eta} & {F''(A \oplus B)} \ar[d]^{\cong} \\ 0 \ar[r] & {F'(A) \oplus F'(B)} \ar[r] & {F(A) \oplus F(B)} \ar[r] & {F''(A) \oplus F''(B)}}$$
we deduce that $\eta$ is also a monomorphism.
\end{proof}

\begin{lemma}\label{Wnul}
Suppose $W \in \Fun(\ccc)$ is weakly effaceable. Then $W(0) = 0$. 
\end{lemma}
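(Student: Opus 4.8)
The plan is to use that $0$ is a zero object of the additive category $\ccc$, so that it is both initial and terminal; this forces any morphism with target $0$ to be split in a trivial way, and the statement then follows by applying $W$. Fix an arbitrary element $x \in W(0)$; the goal is to show $x = 0$.

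First I would apply weak $\Lambda$-effaceability to the object $0$ and the element $x$: this produces a morphism $\lambda \colon C' \lra 0$ in $\Lambda$ with $W(\lambda)(x) = 0$ in $W(C')$. Since $0$ is initial there is a unique morphism $\mu \colon 0 \lra C'$, and $\lambda \circ \mu \in \Hom_\ccc(0,0)$; but $0$ is also terminal, so $\Hom_\ccc(0,0)$ is a singleton and hence $\lambda \circ \mu = \id_0$.

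Finally, recalling that $W$ is a presheaf (contravariant in $\ccc$), functoriality applied to $\lambda \circ \mu = \id_0$ gives $\id_{W(0)} = W(\id_0) = W(\mu) \circ W(\lambda)$. Evaluating at $x$ yields $x = W(\mu)(W(\lambda)(x)) = W(\mu)(0) = 0$, and since $x$ was arbitrary, $W(0) = 0$.

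The argument is short enough that I do not foresee a genuine obstacle; the only points demanding a modicum of care are the variance of $W$ (so that $W(\mu)\circ W(\lambda)$, and not the other order, is the composite one obtains) and the observation that the single-morphism topology and the precise nature of $\Lambda$ play no role beyond the defining property of weak effaceability — in fact the existence of a zero object is all that is used, not the full additive structure.
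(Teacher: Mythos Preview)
Your proof is correct and follows essentially the same argument as the paper: efface $x$ along some $\lambda\colon C'\to 0$, then use the unique map $0\to C'$ to split $\lambda$ and conclude $x=0$ via functoriality. The only cosmetic difference is that the paper explicitly invokes that $W(\mu)$ is a homomorphism of abelian groups to justify $W(\mu)(0)=0$, whereas you leave this implicit.
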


\begin{proof}
Consider an element $x \in W(0)$. There exists a $\Lambda$-morphism $C \lra 0$ such that $W(0) \lra W(C)$ maps $x$ to $0$. But the map $W(C) \lra W(0)$ induced by $0 \lra C$, being a morphism of abelian groups, maps $0$ to $0$. Since $W(0) \lra W(C) \lra W(0)$ is the identity, this proves that $x = 0$ and consequently $W(0) = 0$. 
\end{proof}

\begin{lemma}\label{nuloplus}
If $F \in \Fun(\ccc)$ satisfies $F(0) = 0$, then for $A, B \in \ccc$ the canonical morphism
$$F(A) \oplus F(B) \lra F(A \oplus B) \lra F(A) \oplus F(B)$$
is equal to the identity.
\end{lemma}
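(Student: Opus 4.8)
The plan is to work directly with the biproduct structure of the additive category $\ccc$ together with the (contravariant) functoriality of $F$. Write $i_A\colon A \lra A\oplus B$, $i_B\colon B \lra A\oplus B$ for the coproduct inclusions and $p_A\colon A\oplus B \lra A$, $p_B\colon A\oplus B \lra B$ for the product projections, so that $p_A i_A = \id_A$, $p_B i_B = \id_B$, while $p_A i_B\colon B \lra A$ and $p_B i_A\colon A \lra B$ are the respective zero morphisms. With this notation, the first of the two canonical maps is $F(A)\oplus F(B)\lra F(A\oplus B)$, $(x,y)\mapsto F(p_A)(x)+F(p_B)(y)$, and the second is $F(A\oplus B)\lra F(A)\oplus F(B)$, $z\mapsto (F(i_A)(z),F(i_B)(z))$; both are group homomorphisms because $\Ab$ is additive. (Recall $F \in \Fun(\ccc) = \Fun(\ccc^{\op},\Ab)$ is contravariant, which is why $p_{\bullet}$ produce maps \emph{into} $F(A\oplus B)$ and $i_{\bullet}$ maps \emph{out of} it.)

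Next I would just compute the composite on a pair $(x,y)$. Using that $F$ is a contravariant functor, the diagonal contributions are $F(i_A)F(p_A) = F(p_A i_A) = F(\id_A) = \id_{F(A)}$ and $F(i_B)F(p_B) = \id_{F(B)}$, while the off-diagonal contributions are $F(i_A)F(p_B) = F(p_B i_A)$ and $F(i_B)F(p_A) = F(p_A i_B)$, where $p_B i_A$ and $p_A i_B$ are zero morphisms.

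The only step that uses the hypothesis — and the only ``obstacle'', a very mild one — is the vanishing $F(p_B i_A) = 0 = F(p_A i_B)$. Each of these morphisms factors through the zero object $0$ of $\ccc$, say $p_B i_A\colon A \lra 0 \lra B$, so applying $F$ exhibits $F(p_B i_A)$ as factoring through $F(0)$, which is $0$ by assumption; hence it vanishes. Substituting the four computed values, the composite sends $(x,y)$ to $(x+0,\,0+y) = (x,y)$, i.e. it is the identity, as claimed. (As a byproduct this shows the map $\eta$ of Proposition~\ref{ext1} is a split epimorphism, which is exactly how the lemma is used there.)
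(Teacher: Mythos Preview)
Your proof is correct and follows essentially the same approach as the paper: compute the four ``matrix entries'' of the composite, obtain identities on the diagonal from $p_\bullet i_\bullet = \id$, and obtain zeros off the diagonal because the relevant morphisms factor through $0$ and $F(0)=0$. In fact you are slightly more careful than the paper about the contravariance of $F$ when writing $F(i_A)F(p_A)=F(p_A i_A)$.
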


\begin{proof}
Let $s_A$, $s_B$, $p_A$, $p_B$ denote the canonical injections and projections associated to $A \oplus B$. Then we are now dealing with their images under $F$. We have $F(p_A)F(s_A) = F(p_As_A) = F(1_A) = 1_{F(A)}$ and likewise for $B$. Moreover, since $F(0) = 0$, we also have $F(p_A)F(s_B) = F(p_As_B) = F(0) = 0$ and similarly for $F(p_B)F(s_A)$. This finishes the proof.
\end{proof}

\begin{theorem}\label{lexserre}
Let $\ccc$ be a small additive category.
The inclusions
$$j: \Mod(\ccc) \subseteq \Fun(\ccc)$$
and $$j': \Sh^{\add}(\ccc) \subseteq \Sh(\ccc)$$
are Serre subcategories.
\end{theorem}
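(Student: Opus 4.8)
The plan is to verify, in each case, the two conditions defining a Serre subcategory: (i) the subcategory is closed under subobjects and quotients, and (ii) it is closed under extensions. For $j\colon \Mod(\ccc) \subseteq \Fun(\ccc)$ the extension-closedness is exactly Lemma~\ref{lemaddext}, and closure under subobjects and quotients is the classical fact (cited via \cite{pirashvili}) that a subfunctor or quotient functor of an additive functor is again additive; concretely, if $F \to F''$ is epi in $\Fun(\ccc)$ with $F$ additive, then the canonical map $F''(A \oplus B) \to F''(A) \oplus F''(B)$ is forced to be iso by chasing the split mono/epi pair $s_A, s_B, p_A, p_B$ through the surjection $F \to F''$, using Lemma~\ref{nuloplus}; dually for subfunctors. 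So the first statement needs only to be assembled from the lemmas already in hand.

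For $j'\colon \Sh^{\add}(\ccc) \subseteq \Sh(\ccc)$ the extension-closedness is Proposition~\ref{ext1}, so the remaining work is to show $\Sh^{\add}(\ccc)$ is closed under subobjects and quotients taken \emph{inside} $\Sh(\ccc)$. Here one must be careful that kernels and cokernels in $\Sh(\ccc)$ are computed by applying the sheafification $a'$ to the kernels and cokernels in $\Fun(\ccc)$. The strategy is: given $F \to F''$ epi in $\Sh(\ccc)$ with $F \in \Sh^{\add}(\ccc)$, lift to $\Fun(\ccc)$, where $F''$ as a functor differs from the (automatically additive, by Lemma~\ref{lemaddext} applied to the image) presheaf image of $F$ only by a weakly effaceable functor $W$; then run the same argument as in Proposition~\ref{ext1} — $W(0)=0$ by Lemma~\ref{Wnul} forces $F''(0)=0$, and then Lemma~\ref{nuloplus} together with a diagram chase on $0 \to (\text{image}) \to F'' \to W \to 0$ shows $F''(A \oplus B) \cong F''(A) \oplus F''(B)$, i.e. $F''$ is additive, hence $F'' \in \Mod(\ccc) \cap \Sh(\ccc) = \Sh^{\add}(\ccc)$. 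The subobject case is dual, using that a subsheaf of an additive sheaf, viewed in $\Fun(\ccc)$, is a subfunctor of an additive functor, hence additive by the classical result, and it is a sheaf by hypothesis.

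The one genuine subtlety — and the step I expect to be the main obstacle — is the interaction between sheafification and additivity: a priori $a'$ does not preserve additivity (this is precisely the content of the Remark after the first lemma in \S1.2, that additivity cannot be encoded by a topology), so one cannot simply say "sheafify and stay additive." The resolution is that we never need $a'$ to preserve additivity in general; we only need it in the situation where the relevant functor already sits in a short exact sequence in $\Fun(\ccc)$ between additive functors and a weakly effaceable one, and there additivity is recovered by the explicit $\Lambda$-effaceability argument of Proposition~\ref{ext1}. So the proof reduces to repackaging Proposition~\ref{ext1}, Lemma~\ref{Wnul}, Lemma~\ref{nuloplus}, and Lemma~\ref{lemaddext}, checking in each case that the ambient abelian-category operations (kernel, cokernel) in $\Sh(\ccc)$ and in $\Mod(\ccc)$ are compatibly computed via $i, i', j, j', a, a'$ as in diagram~\eqref{eqdiag}, and then invoking the classical additive-sheaf-theory fact that subfunctors and quotients of additive functors are additive. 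I would also record explicitly that $\Sh^{\add}(\ccc) = \Sh(\ccc) \cap \Mod(\ccc)$ (already noted in the text) so that membership can be tested after applying $j'$.
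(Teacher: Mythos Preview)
Your treatment of the first inclusion $j: \Mod(\ccc) \subseteq \Fun(\ccc)$ is fine, though the paper uses a slightly different additivity criterion: rather than checking the canonical map $F(A \oplus B) \to F(A) \oplus F(B)$, it verifies that $F(a+b) - F(a) - F(b) = 0$ for morphisms $a,b: C \to C'$. This is a little more direct, since for an exact sequence $0 \to F' \to F \to F'' \to 0$ the vanishing of the middle expression immediately forces vanishing on both ends; no separate sub/quotient arguments are needed.

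For the second inclusion your proposal has a real problem in the quotient case. You are right that $F'' = a'(Q)$ where $Q$ is the presheaf image of $F \to F''$, and that $Q$ is additive by the first part. But you then misread the Remark in \S1.2: that Remark says the \emph{additivization} functor (left adjoint to $j: \Mod(\ccc) \to \Fun(\ccc)$) is not exact, hence additivity is not a topology. It does \emph{not} say that sheafification $a'$ fails to preserve additivity --- in fact diagram~\eqref{eqdiag} gives $a' j = j' a$, so $a'$ applied to an additive presheaf is exactly $j'$ applied to its additive sheafification, hence additive. This is precisely what the paper uses: once $Q$ is additive, $F'' = a'(Q)$ is additive and you are done.

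Your proposed workaround --- running the diagram chase of Proposition~\ref{ext1} on $0 \to Q \to F'' \to W \to 0$ --- does not obviously succeed. That proposition has both outer terms additive; here only $Q$ is additive while $W$ is merely weakly effaceable with $W(0)=0$. Lemma~\ref{nuloplus} gives that $\eta: F''(A\oplus B) \to F''(A)\oplus F''(B)$ is a split epimorphism, but the snake lemma on the evident diagram only yields $\ker\eta \cong \ker(W(A\oplus B) \to W(A)\oplus W(B))$, and nothing in ``weakly effaceable'' forces that kernel to vanish. So the step ``a diagram chase shows $F''(A\oplus B) \cong F''(A)\oplus F''(B)$'' is the gap; the clean fix is simply to use that $a'$ preserves additivity via diagram~\eqref{eqdiag}, as the paper does.
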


\begin{proof}
We already showed in Lemma \ref{lemaddext} and Proposition \ref{ext1} that both inclusions are abelian subcategories that are closed under extensions. We need to show that they are closed under subquotients. First, consider an exact sequence
$$0 \lra F' \lra F \lra F'' \lra 0$$
in $\Fun(\ccc)$ in which $F$ is additive. First of all, $F'(0)$ and $F''(0)$ are zero as a subobject and a quotient object of $F(0) = 0$. Now consider morphisms $a, b: C \lra C'$ in $\ccc$ and consider $f' = F'(a + b) - F'(a) - F'(b)$, $f = F(a + b) -F(a) - F(b)$ and $f'' = F''(a + b) - F''(a) - F''(b)$. Then the commutative diagram
$$\xymatrix{0 \ar[r] & {F'(C')} \ar[d]_{f'} \ar[r] & {F(C')} \ar[d]^f \ar[r] & {F''(C')} \ar[d]^{f''} \ar[r] & 0\\
0 \ar[r] & {F'(C)} \ar[r] & {F(C)} \ar[r] & {F''(C)} \ar[r] & 0}$$
immediately yields that $f = 0$ implies that both $f' = 0$ and $f'' = 0$.

For the second claim, consider an exact sequence $0 \lra F' \lra F \lra F'' \lra 0$ in $\Sh(\ccc)$ and suppose that $F$ is additive. Then $F'' = a(Q)$ where $0 \lra F' \lra F \lra Q \lra 0$ is exact in $\Fun(\ccc)$ and $a$ is sheafification. We just obtained that both $F'$ and $Q$ are additive. Hence also $F'' = a(Q)$ is additive.
\end{proof}

\subsection{Single morphism topologies with kernels}
Let $\ccc$ be a small category and suppose $\Lambda$ determines a single $\Lambda$-topology. 
Suppose moreover that the morphisms in $\Lambda$ have kernel pairs. In this situation, the notion of sheaf becomes more tangible. 
For $\lambda \in \Lambda$, consider the kernel pair
$$\xymatrix{ D \ar[r]^{\lambda} & C \\ P \ar[u]^{\kappa_1} \ar[r]_{\kappa_2} & D \ar[u]_{\lambda} }$$
A presheaf $F \in \Fun(\ccc^{\op}, \Set)$  is a sheaf if and only if for every $\lambda \in \Lambda$ with kernel pair $(\kappa_1, \kappa_2)$,
\begin{equation}\label{equalizer}
\xymatrix{{F(C)} \ar[r]_{F(\lambda)} & {F(D)} \ar@/^5pt/[r]^{F(\kappa_1)} \ar@/_5pt/[r]_{F(\kappa_2)} & {F(P)}}
\end{equation}
is an equalizer diagram. 
We immediately deduce the following strengthening of Lemma \ref{moncolim}:

\begin{lemma}\label{filtsheaves}
A filtered colimit of sheaves (in $\Fun(\ccc^{\op}, \Set)$) remains a sheaf. 
\end{lemma}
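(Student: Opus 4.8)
The plan is to exploit the equalizer characterization of sheaves in~\eqref{equalizer}, which is available precisely because every $\lambda \in \Lambda$ is assumed to have a kernel pair. Fix $\lambda : D \lra C$ in $\Lambda$ with kernel pair $(\kappa_1, \kappa_2) : P \rightrightarrows D$, and a filtered colimit $F = \colim_i F_i$ of sheaves $F_i \in \Fun(\ccc^{\op}, \Set)$. We must show that
$$
\xymatrix{{F(C)} \ar[r]_{F(\lambda)} & {F(D)} \ar@/^5pt/[r]^{F(\kappa_1)} \ar@/_5pt/[r]_{F(\kappa_2)} & {F(P)}}
$$
is again an equalizer. Since $\Fun(\ccc^{\op}, \Set)$ is a presheaf category, limits and colimits are computed objectwise, so $F(C) = \colim_i F_i(C)$, $F(D) = \colim_i F_i(D)$, $F(P) = \colim_i F_i(P)$, and the three maps in the diagram are the colimits of the corresponding maps for the $F_i$. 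Each $F_i$ being a sheaf means each of these diagrams of sets is an equalizer. So the statement reduces to a purely set-theoretic fact: a filtered colimit of equalizer diagrams in $\Set$ is an equalizer diagram. This is where the hypothesis that the colimit is \emph{filtered} (not merely monofiltered, as in Lemma~\ref{moncolim}) does all the work.

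Concretely, I would verify directly that filtered colimits in $\Set$ commute with the finite limit computing an equalizer. For exactness at $F(D)$: given $x \in F(D)$ with $F(\kappa_1)(x) = F(\kappa_2)(x)$, choose a representative $x_i \in F_i(D)$ for some index $i$. Then $F_i(\kappa_1)(x_i)$ and $F_i(\kappa_2)(x_i)$ have the same image in $F(P) = \colim_i F_i(P)$, so by the definition of filtered colimit there is a transition morphism $F_i \lra F_j$ equalizing them in $F_j(P)$; replacing $x_i$ by its image $x_j \in F_j(D)$, we have $F_j(\kappa_1)(x_j) = F_j(\kappa_2)(x_j)$. Since $F_j$ is a sheaf, there is a (unique) $y_j \in F_j(C)$ with $F_j(\lambda)(y_j) = x_j$; its image $y \in F(C)$ satisfies $F(\lambda)(y) = x$. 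For injectivity of $F(\lambda)$: if $y, z \in F(C)$ have $F(\lambda)(y) = F(\lambda)(z)$, pick representatives $y_i, z_i \in F_i(C)$; then $F_i(\lambda)(y_i)$ and $F_i(\lambda)(z_i)$ agree after some transition $F_i \lra F_j$, and the sheaf property of $F_j$ (uniqueness of the amalgamation) forces $y_j = z_j$ in $F_j(C)$, hence $y = z$. Note this second part is essentially identical to the corresponding step in the proof of Lemma~\ref{moncolim}, where monofilteredness was used only to identify the two elements in $F_i(E)$; here filteredness supplies a further transition morphism instead.

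There is no serious obstacle: the only subtlety, and the point worth stating clearly, is exactly why filteredness suffices where Lemma~\ref{moncolim} needed monofilteredness. In the general (non-kernel-pair) setting, the compatibility condition on $x$ involves a potentially large family of commuting squares $\lambda\alpha_1 = \lambda\alpha_2$, and one cannot in a single filtered step force $x_i$ to be compatible for \emph{all} of them at once unless the transition maps are monomorphisms. Here, by contrast, the kernel pair $(\kappa_1,\kappa_2)$ packages all of that compatibility into the \emph{single} pair of morphisms $\kappa_1,\kappa_2 : P \rightrightarrows D$, so one filtered step equalizing $F_i(\kappa_1)(x_i)$ and $F_i(\kappa_2)(x_i)$ in $F_j(P)$ is all that is needed. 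Thus the proof is just the observation that $\Fun(\ccc^{\op},\Set)$ has colimits computed objectwise together with the classical fact that filtered colimits commute with finite limits in $\Set$; the kernel-pair hypothesis is what makes the relevant limit finite.
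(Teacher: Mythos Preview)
Your proof is correct and follows exactly the approach the paper intends: the lemma is stated immediately after the equalizer characterization~\eqref{equalizer} with no separate proof, precisely because it reduces to the standard fact that filtered colimits in $\Set$ commute with the finite limit computing that equalizer. Your explicit unpacking of this, and your remark on why the kernel-pair hypothesis replaces the monofilteredness needed in Lemma~\ref{moncolim}, are a faithful elaboration of what the paper leaves implicit.
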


\begin{example}
If $\ccc$ is a regular category \cite{barr}, then $\Lambda = \{ \lambda \,\, |\,\, \lambda \,\, \text{is a coequalizer}\}$ satisfies the conditions of Proposition \ref{propcond}. 
Since a coequalizer is always the coequalizer of its kernel pair, $F \in \Fun(\ccc^{\op}, \Set)$ is a sheaf for $\ttt_{\Lambda}$ if and only if $F$ maps coequalizers of kernel pairs to equalizer diagrams.
\end{example}

Now we return to the setting of a small $\Z$-linear category $\ccc$ on which $\Lambda$ determines single $\Lambda$-topologies. We suppose moreover that the morphisms in $\Lambda$ have kernels.
Let $F: \ccc^{\op} \lra \Ab$ be a (possibly non-additive) functor. Let us write down the sheaf property as concretely as possible. For $\lambda: D \lra C$ in $\Lambda$, we obtain a diagram
\begin{equation}\label{pbplus}
\xymatrix{0 \ar[r] & K \ar[r]^{\kappa} \ar[dr]_{i_1} & D \ar[r]^{\lambda} \ar[r] & C \\
&& {K \oplus D} \ar[u]_-{\kappa + p_2} \ar[r]_-{p_2} & D \ar[u]_{\lambda} }
\end{equation}
in which the square is a kernel pair. The sheaf property for $F$ with respect to $\lambda$ requires that the sequence
\begin{equation} \label{pbplus2}
\xymatrix{0 \ar[r] & {F(C)} \ar[r]_-{F(\lambda)} & {F(D)} \ar[r]_-{F(\kappa + p_2) - F(p_2)} & {F(K \oplus D)}}\end{equation}
is exact. 

In the situation where $F: \ccc^{\op} \lra \Ab$ is additive, exactness of \eqref{pbplus2} clearly reduces to exactness of
\begin{equation}\label{addlex}
\xymatrix{0 \ar[r] & {F(C)} \ar[r]_-{F(\lambda)} & {F(D)} \ar[r]_-{F(\kappa)} & {F(K).}}
\end{equation}

\begin{definition}
An additive functor $F: \ccc^{\op} \lra \Ab$ is called \emph{$\Lambda$-left exact} if for every exact sequence
$$\xymatrix{0 \ar[r] & K \ar[r]_{\kappa} & D \ar[r]_{\lambda} & C}$$
with $\lambda \in \Lambda$
the sequence \eqref{addlex} is exact in $\Ab$.
\end{definition}

Let $\Lex_{\Lambda}(\ccc) \subseteq \Mod(\ccc)$ denote the full subcategory of $\Lambda$-left exact modules. We thus have:
$$\Sh_{\Lambda}^{\mathrm{add}}(\ccc) = \Lex_{\Lambda}(\ccc).$$
In a sense, the non-additive sheaf category $\Sh_{\Lambda}(\ccc)$ captures a kind of $\Lambda$-left exactness with additivity ``removed''.

\subsection{Exact categories}
Let $\ccc$ be an exact category in the sense of Quillen \cite{quillen2, keller9}. The exact structure on the additive category $\ccc$ is given by a collection of so called \emph{conflations}
\begin{equation}\label{confl} 
\xymatrix{K \ar[r]_{\kappa} & D \ar[r]_{\lambda} & C,}
\end{equation}
exact in the sense that $\kappa$ is a kernel of $\lambda$ and $\lambda$ is a cokernel of $\kappa$, satisfying some further axioms. Let $\Lambda$ be the collection of \emph{deflations}, i.e. morphisms $\lambda$ turning up in a conflation \eqref{confl}, and let $\Omega$ be the collection of \emph{inflations}, i.e. morphisms $\kappa$ turning up in a conflation \eqref{confl}. The further axioms of an exact category can be summarized as follows:
\begin{enumerate}
\item $\Lambda$ satisfies the conditions of Proposition \ref{propcond}.
\item $\Omega^{\op}$ satisfies the conditions of Proposition \ref{propcond} in $\ccc^{\op}$.
\end{enumerate}
Note that since $\kappa$ is required to be a cokernel of $\lambda$, the entire exact structure is in fact determined by the collection $\Lambda$. From now on, the exact structure of $\ccc$ being specified, we will drop the mention of $\Lambda$ from our notations and terminology. In this way we naturally recover the standard notions of weakly effaceable functors and left exact functors. 
It is well known (see \cite{keller9}) that the canonical embedding
$$\ccc \lra \Lex(\ccc): C \longmapsto \ccc(-,C)$$
is such that \eqref{confl} is a conflation in $\ccc$ if and only if 
$$\xymatrix{0 \ar[r] & K \ar[r]_{\kappa} & D \ar[r]_{\lambda} & C \ar[r] & 0}$$
is an exact sequence in $\Lex(\ccc)$.

Let $\Ind(\ccc) \subseteq \Mod(\ccc)$ denote the full subcategory of filtered colimits of $\ccc$-objects.
For a Grothendieck category $\ddd$, let $\mathsf{fp}(\ddd)$ denote the full subcategory of finitely presented objects. 

\begin{proposition}\label{epgen}
We have $\ccc \subseteq \mathsf{fp}(\Lex(\ccc))$ and $\Ind(\ccc) \subseteq \Lex(\ccc)$. The category $\Lex(\ccc)$ is locally finitely presented with $\ccc$ as a collection of finitely presented generators. In particular, $\mathsf{fp}(\Lex(\ccc))$ is the closure of $\ccc$ in $\Lex(\ccc)$ under finite colimits and every object in $\Lex(\ccc)$ is a filtered colimit of objects in $\mathsf{fp}(\Lex(\ccc))$. If $\ccc \cong \mathsf{fp}(\Lex(\ccc))$, then $\Ind(\ccc) =  \Lex(\ccc)$.
\end{proposition}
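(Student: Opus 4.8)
The plan is to work entirely inside the ambient category $\Mod(\ccc)$ and use the characterization $\Sh^{\mathrm{add}}(\ccc) = \Lex(\ccc)$ together with standard facts about locally finitely presented Grothendieck categories. First I would verify $\ccc \subseteq \mathsf{fp}(\Lex(\ccc))$: every representable $\ccc(-,C)$ is finitely generated in $\Mod(\ccc)$ (Yoneda), and one checks it is finitely presented in $\Lex(\ccc)$ by invoking Lemma~\ref{filtsheaves} — a filtered colimit of $\Lambda$-left exact modules is again a sheaf, hence lies in $\Lex(\ccc)$, so the colimit computed in $\Lex(\ccc)$ agrees with the one in $\Mod(\ccc)$, where representables are finitely presented. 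The inclusion $\Ind(\ccc) \subseteq \Lex(\ccc)$ is then immediate from the same lemma, since a filtered colimit of objects of $\ccc$ is a filtered colimit of sheaves.

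Next I would establish that $\Lex(\ccc)$ is locally finitely presented with $\ccc$ a generating family of finitely presented objects. Generation is the content of the canonical embedding $\ccc \lra \Lex(\ccc)$ being dense in the appropriate sense: every $F \in \Lex(\ccc) \subseteq \Mod(\ccc)$ is a (not necessarily filtered) colimit of representables in $\Mod(\ccc)$, and one upgrades this to a filtered colimit presentation by the usual argument — write $F$ as the filtered colimit of the finitely presented subobjects of the canonical presentation, noting again that these colimits are computed the same way in $\Lex(\ccc)$ as in $\Mod(\ccc)$ by Lemma~\ref{filtsheaves}. Combined with the previous paragraph, this gives that $\Lex(\ccc)$ is locally finitely presented and that the finitely presented objects are precisely the retracts of finite colimits of objects of $\ccc$; since $\ccc$ is already idempotent complete (being exact) and closed under the relevant finite colimits inside $\mathsf{fp}(\Lex(\ccc))$, the standard description $\mathsf{fp}(\Lex(\ccc)) = $ closure of $\ccc$ under finite colimits follows, and every object is a filtered colimit of such.

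Finally, for the last assertion: if $\ccc \cong \mathsf{fp}(\Lex(\ccc))$, then since every object of $\Lex(\ccc)$ is a filtered colimit of objects in $\mathsf{fp}(\Lex(\ccc)) = \ccc$, we get $\Lex(\ccc) \subseteq \Ind(\ccc)$, and together with $\Ind(\ccc) \subseteq \Lex(\ccc)$ already proven this yields equality.

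I expect the main obstacle to be the bookkeeping around \emph{where} colimits are computed. The key subtlety is that finite colimits and filtered colimits in $\Lex(\ccc) = \Sh^{\mathrm{add}}(\ccc)$ need not a priori coincide with those in $\Mod(\ccc)$ — filtered colimits do agree precisely because of Lemma~\ref{filtsheaves} (the sheaf condition is preserved), but finite colimits in the sheaf category require sheafification. One must therefore be careful to phrase the finitely-presented / finitely-generated conditions in terms of Hom-functors commuting with filtered colimits \emph{taken in $\Lex(\ccc)$}, and then use the agreement of filtered colimits to transfer the known statements from $\Mod(\ccc)$, where representables are finitely presented by the Yoneda lemma. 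Getting the description of $\mathsf{fp}(\Lex(\ccc))$ exactly right — as the \emph{idempotent-complete} closure of $\ccc$ under finite colimits, which here is just the finite-colimit closure because $\ccc$ is already idempotent complete — is the other place where care is needed.
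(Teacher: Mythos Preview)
Your proposal is correct and follows essentially the same approach as the paper: both use Lemma~\ref{filtsheaves} (filtered colimits of sheaves remain sheaves) as the key input to transfer finite presentability of representables from $\Mod(\ccc)$ to $\Lex(\ccc)$ and to obtain $\Ind(\ccc) \subseteq \Lex(\ccc)$, and then invoke standard facts about locally finitely presented categories for the rest. The paper's proof is terser --- it simply declares the local-finite-presentation statements ``standard'' --- while you spell out more of the bookkeeping about where colimits are computed; but the strategy is identical.

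One small correction: your parenthetical ``since $\ccc$ is already idempotent complete (being exact)'' is false in general --- Quillen exact categories need not be idempotent complete. Fortunately this does not affect the argument. Since $\Lex(\ccc)$ is abelian, any retract is already a finite colimit (a cokernel of $1-e$ for the relevant idempotent $e$), so the finite-colimit closure of $\ccc$ inside $\Lex(\ccc)$ is automatically closed under retracts, and the identification $\mathsf{fp}(\Lex(\ccc)) = $ finite-colimit closure of $\ccc$ holds without any idempotent-completeness hypothesis on $\ccc$. You should replace that justification accordingly.
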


\begin{proof}
The objects $\ccc(-,C)$ are finitely presented in $\Mod(\ccc)$, so by Lemma \ref{filtsheaves} they are finitely presented in $\Lex(\ccc)$ as well. By the same lemma, filtered colimits of $\ccc$-objects in $\Mod(\ccc)$ remain left exact. The statements concerning local finite presentation are standard, in particular $F$ in $\Lex(\ccc)$ can be written as filtered colimit of $\mathsf{fp}(\Lex(\ccc))/F \lra \Lex(\ccc): (X \rightarrow F) \longmapsto X$. If $\ccc \cong \mathsf{fp}(\ccc)$, then again by Lemma \ref{filtsheaves}, this colimit can be computed in $\Mod(\ccc)$.
\end{proof}

\begin{examples}\label{examples}
\begin{enumerate}
\item Let $R$ be a ring. Let $\ccc_1 = \mathsf{free}(R)$ be the category of finitely generated free modules and $\ccc_2 = \mathsf{proj}(R)$ the category of finitely generated projective modules. Both subcategories of $\Mod(R)$ are closed under extensions (which are automatically split) whence inherit an exact structure from $\Mod(R)$. By Remark \ref{remsplit}, the topologies $\ttt_{\Lambda}$ and $\ttt_{\Lambda}^{\mathrm{add}}$ are trivial whence
$$\Lex(\ccc_i) = \Sh^{\mathrm{add}}(\ccc_i) = \Mod(\ccc_i) \cong \Mod(R)$$
and $$\Sh(\ccc_i) \cong \Fun(\ccc_i).$$
\item If $\ccc$ is a small abelian category with the canonical exact structure, then $\ccc$ is closed under finite colimits in $\Lex(\ccc)$ whence by Proposition \ref{epgen}, $\ccc \cong \mathsf{fp}(\Lex(\ccc))$ and $\Ind(\ccc) = \Lex(\ccc)$. Now let $\aaa$ be a locally coherent Grothendieck category, i.e. $\aaa$ is locally finitely presented and $\mathsf{fp}(\aaa) \subseteq \aaa$ is an abelian subcategory. Then by Proposition \ref{evgen}, $\aaa \cong  \Lex(\mathsf{fp}(\aaa)) \cong \Ind(\mathsf{fp}(\aaa))$. These facts are well known (see for example \cite{popescu}).

\item For a general Grothendieck category $\aaa$ the kernel of an epimorphism between finitely presented objects is not itself finitely presented, so $\mathsf{fp}(\aaa)$ does not inherit an exact structure from $\aaa$. By Proposition \ref{evgen}, it does however always inherit the single $\aaa$-epimorphism topology $\ttt$ for which
$$\aaa \cong \Sh^{\mathrm{add}}(\mathsf{fp}(\aaa), \ttt).$$
\item Clearly, the opposite category $\ccc^{\op}$ of an exact category becomes exact with $\Omega^{\op}$ playing the role of $\Lambda$. Thus, we obtain a canonical embedding
$$\ccc^{\op} \lra \Lex(\ccc^{\op}) = \Lex_{\Omega^{\op}}(\ccc^{\op}).$$
\end{enumerate}
\end{examples}
The definition of derived categories of abelian categories can be extended to exact categories (see \cite{neemanexact}, \cite{keller10}).

\begin{proposition}\label{propffd}
Consider the canonical embedding $\ccc \lra \Lex(\ccc)$. The canonical functor $D^-(\ccc) \lra D^-(\Lex(\ccc))$ is fully faithful.
\end{proposition}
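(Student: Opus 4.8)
The plan is to realize $\Lex(\ccc)$ as a localization of $\Fun(\ccc)=\Fun(\ccc^{\op},\Ab)$ and to deduce the full faithfulness of $D^-(\ccc) \lra D^-(\Lex(\ccc))$ by comparing it with the more classical embedding of bounded-above derived categories into the derived category of a (Grothendieck) module or functor category. The cleanest route uses the resolution property: by Proposition~\ref{epgen}, $\ccc$ is a collection of finitely presented generators of $\Lex(\ccc)$, and moreover the canonical embedding turns conflations in $\ccc$ into short exact sequences in $\Lex(\ccc)$. The essential point to check is therefore that every object of $\Lex(\ccc)$ admits a (possibly infinite, but bounded-above) resolution by objects coming from $\ccc$; in fact by finitely many iterations one reduces to showing that the category $\ccc$ is closed under kernels of deflations \emph{after} one allows representatives, which is exactly the content of conflations being exact in $\Lex(\ccc)$.

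First I would recall the general principle: if $\aaa$ is an exact category, $\bbb$ is an abelian category, and $e:\aaa \hookrightarrow \bbb$ is a fully faithful exact functor (sending conflations to short exact sequences) whose image is \emph{closed under extensions} and \emph{``deflation-resolving''} in the sense that for every $B\in\bbb$ there is an epimorphism $e(A)\twoheadrightarrow B$ with $A\in\aaa$ and the kernel again of this form up to the same condition, then $D^-(\aaa)\lra D^-(\bbb)$ is fully faithful. This is the standard criterion (see \cite{keller9}, \cite{neemanexact}); the hypotheses on $e$ guarantee that bounded-above complexes over $\aaa$ compute the same $\Hom$'s in the derived category as they do in $D^-(\bbb)$, via the usual argument with resolutions and the comparison of homotopy categories $\K^-(\aaa)\lra \K^-(\bbb)$.

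So the key steps are: (i) invoke Proposition~\ref{epgen} to get that $\ccc$ consists of finitely presented generators of the Grothendieck category $\Lex(\ccc)$, hence $\Lex(\ccc)$ has enough objects which are quotients of (coproducts of) objects of $\ccc$; (ii) observe that by the characterization of conflations recalled just before Proposition~\ref{epgen}, a deflation $\lambda:D\lra C$ in $\ccc$ gives a short exact sequence $0\to K\to D\to C\to 0$ in $\Lex(\ccc)$ with $K\in\ccc$, so the subcategory $\ccc\subseteq\Lex(\ccc)$ is closed under kernels of admissible epimorphisms between its objects; (iii) combine (i) and (ii) to produce, for any $F\in\Lex(\ccc)$, a resolution $\cdots \to C_1 \to C_0 \to F \to 0$ with all $C_n\in\add(\ccc)$ — using that a quotient of an object of $\ccc$ need not lie in $\ccc$, but one only needs the $C_n$ in $\Lex(\ccc)$ while the syzygies between the $C_n$ themselves are again deflations with $\ccc$-kernels, so an inductive construction works; (iv) apply the general full faithfulness criterion, checking that $\ccc$ is closed under extensions in $\Lex(\ccc)$ — but this is immediate since $\Lex(\ccc)=\Sh^{\mathrm{add}}_\Lambda(\ccc)$ and Theorem~\ref{lexserre}/Proposition~\ref{ext1} already give that additive sheaves are closed under extensions, so a priori an extension of two $\ccc(-,C)$'s in $\Lex(\ccc)$ is computed as in $\Mod(\ccc)$ where it equals $\ccc(-,C'')$ by additivity of the Yoneda embedding and the conflation axioms.

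The main obstacle, and the step I would spend the most care on, is step (iii): making the resolution genuinely ``bounded-above and termwise in $\ccc$'' while only having that \emph{deflations} (not arbitrary epimorphisms in $\Lex(\ccc)$) have $\ccc$-kernels. The subtlety is that an arbitrary $F\in\Lex(\ccc)$ is only a \emph{filtered colimit} of objects of $\mathsf{fp}(\Lex(\ccc))$, and $\mathsf{fp}(\Lex(\ccc))$ is the closure of $\ccc$ under finite colimits — so one does not in general get $F$ as a quotient of a single object of $\ccc$. The correct fix is to first reduce to the case $F\in\mathsf{fp}(\Lex(\ccc))$ (full faithfulness of $D^-(\ccc)\to D^-(\Lex(\ccc))$ factors through $D^-(\mathsf{fp}(\Lex(\ccc)))$ and one treats the two factors separately, the second being the classical statement for $\mathsf{fp}$ of a locally coherent category — but $\Lex(\ccc)$ need not be locally coherent!). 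Hence the genuinely safe approach is the direct one: show that any complex in $\K^-(\ccc)$ which is acyclic as a complex over $\Lex(\ccc)$ is already homotopy-equivalent to zero over $\ccc$ — equivalently, that an acyclic bounded-above complex of objects of $\ccc$, viewed in $\Lex(\ccc)$, has all its syzygies lying in $\ccc$ — which again reduces to the statement that kernels of deflations are conflation-kernels, i.e. lie in $\ccc$. I expect this to be the heart of the argument and would phrase it precisely as: a bounded-above complex $C^\bullet$ over $\ccc$ which is exact in $\Lex(\ccc)$ has, at each spot, cocycles $Z^n\in\ccc$ with $C^{n-1}\to Z^n$ a deflation, and then splicing gives a contracting homotopy, so $\K^-(\ccc)\to D^-(\Lex(\ccc))$ kills exactly the acyclic complexes; full faithfulness follows formally.
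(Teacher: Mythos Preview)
Your proposal contains a genuine gap, and it also misidentifies the correct criterion for full faithfulness. The standard result you want is precisely \cite[Theorem 12.1]{keller10}: for a fully faithful exact embedding $\ccc \hookrightarrow \bbb$ into an abelian category, the functor $D^-(\ccc) \to D^-(\bbb)$ is fully faithful provided that for every epimorphism $F \twoheadrightarrow C$ in $\bbb$ with $C \in \ccc$, there exists $C' \in \ccc$ and a map $C' \to F$ such that the composite $C' \to C$ is still an epimorphism. Note that this does \emph{not} ask you to resolve an arbitrary object $F \in \Lex(\ccc)$ by objects of $\ccc$; it only asks you to lift epimorphisms onto objects of $\ccc$. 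This is exactly what the paper isolates as Lemma~\ref{lemeff}, proved using Proposition~\ref{epgen} (finite presentation of $C$ in $\Lex(\ccc)$). Your step (iii) struggles precisely because you are trying to verify a stronger hypothesis than is actually needed.

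Your fallback ``safe approach'' is incorrect. Even if all syzygies $Z^n$ of an acyclic bounded-above complex lie in $\ccc$, the complex need \emph{not} be contractible: take $\ccc$ to be finitely generated abelian groups and the complex $0 \to \Z \xrightarrow{2} \Z \to \Z/2 \to 0$. Having syzygies in $\ccc$ only shows that the complex is $\ccc$-acyclic (acyclic for the exact structure), so that the two notions of acyclicity coincide; this identifies the \emph{kernel} of $K^-(\ccc) \to D^-(\Lex(\ccc))$ but does not by itself imply that the induced functor on Verdier quotients is fully faithful. For that one still needs the lifting argument encoded in Keller's criterion, which is what the paper invokes directly.
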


\begin{proof}
By \cite[Theorem 12.1]{keller10}, this immediately follows from Lemma \ref{lemeff}.
\end{proof}

\begin{lemma}\label{lemeff}
Consider an epimorphism $F \lra C$ in $\Lex(\ccc)$ with $C \in \ccc$. There is a map $C' \lra F$ with $C' \in \ccc$ such that the composition $C' \lra F \lra C$ remains an epimorphism.
\end{lemma}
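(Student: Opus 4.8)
The plan is to unwind the definitions of $\Lex(\ccc)$ and use the fact that objects $C \in \ccc$ are finitely presented there (Proposition \ref{epgen}). Write $F = \colim_i C_i$ as a filtered colimit of objects $C_i \in \ccc$; such a presentation exists since $\Lex(\ccc) = \Ind(\ccc)$ when $\ccc$ is closed under finite colimits, but even in general every $F \in \Lex(\ccc)$ is a filtered colimit of objects in $\mathsf{fp}(\Lex(\ccc))$, and each of those is a finite colimit of $\ccc$-objects, so one can refine to a filtered colimit of $\ccc$-objects by the usual finitely-presentedness arguments. The epimorphism $F \lra C$ is then a compatible family of maps $C_i \lra C$. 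What we want is a single $C' \in \ccc$ mapping to $F$ so that $C' \lra F \lra C$ is still epi.

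The key point is to analyze what ``epimorphism onto $C \in \ccc$'' means inside $\Lex(\ccc) = \Sh^{\add}(\ccc)$. In the sheaf category, $F \lra C = \ccc(-,C)$ is an epimorphism iff it is locally surjective for the deflation topology, i.e. the identity $1_C \in \ccc(C,C)$ is, after pulling back along some deflation $\lambda \colon C' \lra C$, in the image of $F(C') \lra \ccc(C',C)$. Concretely this says: there is a deflation $\lambda \colon C' \lra C$ (with $C' \in \ccc$) and an element $x \in F(C')$ whose image in $\ccc(C',C)$ is $\lambda$. Such an element $x$ is precisely a morphism $\ccc(-,C') \lra F$, i.e. a map $C' \lra F$ in $\Lex(\ccc)$, and by construction the composite $C' \lra F \lra C$ equals $\lambda$, which is a deflation, hence in particular an epimorphism in $\Lex(\ccc)$ (since deflations go to epimorphisms under $\ccc \lra \Lex(\ccc)$). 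This gives exactly the required $C'$.

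So the steps are: (1) recall the explicit description of the deflation topology and of epimorphisms in the sheaf category $\Sh^{\add}(\ccc)$, noting that a basis of covers of $C$ is given by single deflations $\lambda \colon C' \lra C$; (2) apply local surjectivity of $F \lra C$ to the element $1_C$ to extract a deflation $\lambda \colon C' \lra C$ and an $x \in F(C')$ mapping to $\lambda$; (3) reinterpret $x$ via Yoneda as a morphism $C' \lra F$ in $\Lex(\ccc)$ and observe the composite $C' \lra F \lra C$ is $\lambda$, hence epi. An alternative, perhaps cleaner route avoiding explicit sheafification: since $F$ is a filtered colimit of $\ccc$-objects $C_i$ and $C$ is finitely presented in $\Lex(\ccc)$, the fact that $\colim_i \ccc(C, C_i) \cong \ccc(C,F)$ lets one split... no — the epi need not split; so one instead uses that $\colim_i \Hom(C_i, C) = \Hom(F, C)$ only partially helps. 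The honest obstacle is step (2): one must genuinely invoke that ``epimorphism in the sheaf category'' unwinds to the deflation-local-surjectivity statement, i.e. that the topology has the $\langle\lambda\rangle$ as a basis and that sheafification is computed by the expected local-sections formula. This is exactly the content recorded earlier in the section (the $\langle\lambda\rangle$ form a basis for $\ttt_{\Lambda}^{\mathrm{add}}$, and $\www_\Lambda^{\add} = \Kern(a)$), so no new work is needed; the rest is Yoneda bookkeeping.
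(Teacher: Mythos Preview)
Your argument is correct and takes a genuinely different route from the paper. The paper's proof is purely categorical: it writes $F$ as a monofiltered colimit of its finitely generated subobjects $M_i$, uses finite presentability of $C$ to see that some single $M_j$ already surjects onto $C$, and then covers the finitely generated $M_j$ by a finite direct sum $C' = \bigoplus_{i=1}^n C_i \in \ccc$. Your proof instead exploits the sheaf-theoretic description of $\Lex(\ccc)$: an epimorphism onto a representable means the cokernel in $\Mod(\ccc)$ is weakly effaceable, so $1_C$ lifts to $F(C')$ after restriction along some deflation $\lambda\colon C'\to C$; Yoneda then turns this lift into the desired $C'\to F$.

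Your approach is shorter, uses only material already recorded in the paper (the $\langle\lambda\rangle$ form a basis for $\ttt_\Lambda^{\mathrm{add}}$ and $\www_\Lambda^{\mathrm{add}}=\Kern(a)$), and in fact yields the stronger conclusion that the composite $C'\to C$ is a \emph{deflation}, not merely an epimorphism in $\Lex(\ccc)$. The paper's approach, on the other hand, is more robust in that it would go through for any locally finitely presented Grothendieck category with an additive generating subcategory, without needing the single-morphism-topology structure. Your opening paragraph about filtered colimits of $\ccc$-objects is exploratory and you rightly abandon it; the actual proof is your second paragraph, and the final paragraph's ``steps'' summary is accurate.
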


\begin{proof}
By Proposition \ref{epgen}, $C$ is finitely presented in $\Lex(\ccc)$, and $\Lex(\ccc)$ is a locally finitely presented category. Consider $f: F \lra C$ as stated. Writing $F = \colim_i M_i$ as a monofiltered colimit of its finitely generated subobjects, we have $C = \colim_i f(M_i)$. Since $C$ is finitely presented, the identity $1_C: C \lra \colim_i f(M_i)$ factors through some $f(M_j) \lra \colim_i f(M_i) = C$ which is then necessarily an isomorphism. Thus, we obtain an epimorphism $M = M_j \lra F \lra C$ with $M$ finitely generated. Now there is an epimorphism $\oplus_i C_i \lra M$ and since $M$ is finitely generated, an epimorphism $\oplus_{i = 1}^n C_i \lra M$. Finally, since $\ccc$ is additive, $C' = \oplus_{i = 1}^n C_i \in \ccc$ and we obtain the desired epimorphism $C' \lra M \lra F \lra C$.
\end{proof}

\subsection{Sheaves in two variables}\label{parbimodules}
If $\ccc$ is an exact category, then both $\ccc$ and $\ccc^{\op}$ are naturally endowed with single morphism topologies: the ``single deflation-topology'' on $\ccc$ and the ``single inflation-topology'' on $\ccc^{\op}$. Hence, it makes sense to consider bimodules and bifuncors over $\ccc$ that are sheaves in either of the two variables. In fact, we can develop everything for two possibly different sites $\aaa^{\op}$ and $\bbb$, which, for simplicity of exposition, we take to arise from exact categories.

Consider exact categories $\aaa$ and $\bbb$ and the bifunctor category $\Fun(\aaa^{\op} \times \bbb)$. We will introduce a list of subcategories $\Fun^{\ast}_{\star}(\aaa^{\op} \times \bbb)$, in which we consider functors that are additive in some of the arguments, and sheaves in some of the arguments.
We will indicate additivity by upper indices $\ast \in \{\varnothing, \triangleleft, \triangleright, \diamond \}$ (where $\ast = \varnothing$ means ``invisible index''): $\Fun^{\triangleleft}$ means additive in the first variable (i.e. all the $F(-,A)$ are additive), $\Fun^{\triangleright}$ means additive in the second variable (i.e. all the $F(B,-)$ are additive), $\Fun^{\diamond}$ means additive in both variables (i.e. $\Fun^{\diamond}(\aaa^{\op} \times \bbb) = \Mod(\aaa^{\op} \otimes \bbb)$), and $\Fun$ means additive in none of the variables. In the same way, we indicate sheaves by lower indices $\star \in \{\varnothing,  \triangleleft, \triangleright, \diamond \}$. So for example, $\Fun^{\triangleleft}_{\triangleright}(\aaa^{\op} \times \bbb)$ consists of functors $F$ for which every $F(-,A)$ is additive and every $F(B,-)$ is a sheaf.
We are interested in inclusions of the type
$$i: \Fun^{\ast}_{\star}(\aaa^{\op} \times \bbb) \lra \Fun^{\ast}(\aaa^{\op} \times \bbb)$$
where the ``additivity parameter'' is left unchanged, but we have inclusions of sheaves into presheaves in some of the arguments. Our first aim is to show that all these inclusions are localizations, just like
$$i_1: \Lex(\aaa) \lra \Mod(\aaa)$$
and $$i_2: \Sh(\aaa) \lra \Fun(\aaa)$$
in the one argument case. 
First note that $i_1$ and $i_2$ give rise to a number of localizations by looking at the induced $\Fun(\bbb, i_j)$ and $\Mod(\bbb, i_j)$, and dual versions of these. Also, it is immediate to write down the corresponding localizing Serre subcategories. For example, 
$$\Fun^{\triangleright}_{\triangleright}(\aaa^{\op} \times \bbb) \lra \Fun^{\triangleright}(\aaa^{\op} \times \bbb)$$
is realized as $\Fun(\bbb, i_1)$, and the corresponding localizing Serre subcategory consists of functors that are weakly effaceable in the second argument. In general, for $\ast \in \{\varnothing, \triangleleft, \triangleright, \diamond \}$ and $\star \in \{ \triangleleft, \triangleright \}$, we put
$$\www^{\ast}_{\star} = \www^{\ast}_{\star}(\aaa^{\op} \times \bbb) \subseteq \Fun^{\ast}_{\star}(\aaa^{\op} \times \bbb)$$
the subcategory of functors weakly effaceable in the argument designated by $\star$. For example, in the above example, the relevant category is $\www^{\triangleright}_{\triangleright}$.

Next we turn to the cases we haven't covered yet, namely the inclusions
$$i: \Fun^{\ast}_{\diamond}(\aaa^{\op} \times \bbb) \lra \Fun^{\ast}(\aaa^{\op} \times \bbb).$$
For localizing Serre subcategories $\sss_1$ and $\sss_2$ of an abelian category $\ccc$, we put $\sss_1 \ast \sss_2 = \{ C \in \ccc \, |\, \exists S_1 \in \sss_1, S_2 \in \sss_2, 0 \lra S_1 \lra C \lra S_2 \lra 0\}$. The subcategories are called \emph{compatible} \cite{verschoren1, vanoystaeyenverschoren} if $\sss_1 \ast \sss_2 = \sss_2 \ast \sss_1$. In this event $\sss_1 \ast \sss_2$ is the smallest localizing Serre subcategory containing $\sss_1$ and $\sss_2$ and
$$(\sss_1 \ast \sss_2)^{\perp} = \sss_1^{\perp} \cap \sss_2^{\perp}.$$

\begin{definition}
A module $W \in \Fun^{\ast}(\aaa^{\op} \times \bbb)$ is called \emph{weakly effaceable} if for every $\xi \in W(B,A)$, there exist a deflation $B' \lra B$ and an inflation $A \lra A'$ such that the induced $W(B,A) \lra W(B',A')$ maps $\xi$ to zero.
\end{definition}

\begin{proposition}\label{propwef}
The inclusion 
$$i: \Fun^{\ast}_{\diamond}(\aaa^{\op} \times \bbb) \lra \Fun^{\ast}(\aaa^{\op} \times \bbb)$$
is a localization with corresponding localizing subcategory 
$$\www^{\ast}_{\diamond} \doteqdot \www^{\ast}_{\triangleleft} \ast \www^{\ast}_{\triangleright}$$ consisting of all weakly effaceable bifunctors.
\end{proposition}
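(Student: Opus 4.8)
The plan is to deduce the statement from the one-variable localizations recalled above together with the calculus of compatible localizing Serre subcategories stated just before the proposition. Write bifunctors as $W(B,A)$ with $B \in \bbb$ carrying the single deflation topology and $A \in \aaa$ the single inflation topology, so that $\www^{\ast}_{\triangleleft}$ consists of the $W \in \Fun^{\ast}(\aaa^{\op} \times \bbb)$ weakly effaceable in the $B$-variable and $\www^{\ast}_{\triangleright}$ of those weakly effaceable in the $A$-variable. Applying the one-variable case objectwise --- through the induced functors $\Fun(\bbb, i_j)$, $\Mod(\bbb, i_j)$ and their duals already discussed --- shows that $\www^{\ast}_{\triangleleft}$ and $\www^{\ast}_{\triangleright}$ are localizing Serre subcategories of the Grothendieck category $\Fun^{\ast}(\aaa^{\op} \times \bbb)$ with $(\www^{\ast}_{\triangleleft})^{\perp} = \Fun^{\ast}_{\triangleleft}(\aaa^{\op} \times \bbb)$ and $(\www^{\ast}_{\triangleright})^{\perp} = \Fun^{\ast}_{\triangleright}(\aaa^{\op} \times \bbb)$. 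Because $\Fun^{\ast}_{\diamond} = \Fun^{\ast}_{\triangleleft} \cap \Fun^{\ast}_{\triangleright}$ by definition, the argument reduces to two claims: that $\www^{\ast}_{\triangleleft}$ and $\www^{\ast}_{\triangleright}$ are compatible, and that $\www^{\ast}_{\triangleleft} \ast \www^{\ast}_{\triangleright}$ is exactly the class of weakly effaceable bifunctors. Granting these, the recalled formula $(\sss_1 \ast \sss_2)^{\perp} = \sss_1^{\perp} \cap \sss_2^{\perp}$ gives $(\www^{\ast}_{\triangleleft} \ast \www^{\ast}_{\triangleright})^{\perp} = \Fun^{\ast}_{\diamond}$; moreover $\www^{\ast}_{\diamond} := \www^{\ast}_{\triangleleft} \ast \www^{\ast}_{\triangleright}$ is then the smallest localizing Serre subcategory containing both, and, being the right orthogonal of a localizing Serre subcategory of a Grothendieck category, the inclusion $\Fun^{\ast}_{\diamond} \hookrightarrow \Fun^{\ast}(\aaa^{\op} \times \bbb)$ is a localization with corresponding localizing subcategory $\www^{\ast}_{\diamond}$, which is the assertion.

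It remains to establish the two claims, and I would get them simultaneously by identifying $\www^{\ast}_{\triangleleft} \ast \www^{\ast}_{\triangleright}$ with the weakly effaceable bifunctors. The easy inclusion: if $0 \to W_1 \to W \to W_2 \to 0$ is exact with $W_1 \in \www^{\ast}_{\triangleleft}$ and $W_2 \in \www^{\ast}_{\triangleright}$ (or the other way round), then for $\xi \in W(B,A)$ one first effaces the image of $\xi$ in $W_2$ by a single morphism in one variable; the resulting translate of $\xi$ then lies in $W_1$ and is effaced by a morphism in the other variable, and the composite of the two (commuting) morphisms kills $\xi$, so $W$ is weakly effaceable. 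Thus both $\www^{\ast}_{\triangleleft} \ast \www^{\ast}_{\triangleright}$ and $\www^{\ast}_{\triangleright} \ast \www^{\ast}_{\triangleleft}$ sit inside the weakly effaceable bifunctors. For the reverse inclusion, let $W$ be weakly effaceable and set
$$W'(B,A) = \{\, \xi \in W(B,A) \ \mid \ W(\id_B, a)(\xi) = 0 \text{ for some inflation } a \colon A \to A' \,\}.$$
One checks that $W'$ is a subfunctor of $W$: stability under morphisms in the $B$-variable is immediate, and stability under a morphism $c \colon A \to A''$ in the $A$-variable uses that inflations of $\aaa$ are stable under cobase change (condition (2) of Proposition~\ref{propcond} for $\Omega^{\op}$ in $\aaa^{\op}$) --- forming the pushout $A' \sqcup_A A''$ produces an inflation $a'' \colon A'' \to A' \sqcup_A A''$ through which $W(\id_B, c)(\xi)$ is effaced. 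Since a subobject of a functor additive in a given variable is again additive in that variable (Theorem~\ref{lexserre}), $W' \in \Fun^{\ast}$, and $W' \in \www^{\ast}_{\triangleright}$ by construction; and $W / W' \in \www^{\ast}_{\triangleleft}$, since any class in $(W/W')(B,A)$ lifts to some $\xi \in W(B,A)$, weak effaceability of $W$ yields a deflation $b \colon B' \to B$ and an inflation $a \colon A \to A'$ with $W(b,a)(\xi) = 0$, hence $W(b, \id_A)(\xi) \in W'(B',A)$ and the class dies in $(W/W')(B',A)$. This shows $W \in \www^{\ast}_{\triangleright} \ast \www^{\ast}_{\triangleleft}$, and the mirror-image construction (effacing first in the $B$-variable, using base-change stability of the deflations of $\bbb$) gives $W \in \www^{\ast}_{\triangleleft} \ast \www^{\ast}_{\triangleright}$. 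Hence both $\ast$-products equal the class of weakly effaceable bifunctors, which is the compatibility of $\www^{\ast}_{\triangleleft}$ and $\www^{\ast}_{\triangleright}$ together with the identification of $\www^{\ast}_{\diamond}$.

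I expect essentially all the substance to sit in the one non-formal point: that $W'$ is a subfunctor in the exact-category direction. The property ``$\xi$ is effaced by some inflation'' has to survive applying an arbitrary $\aaa$-morphism, and this is precisely the cobase-change axiom of an exact category; if it failed, $W'$ would not be functorial and the two-step filtration of a weakly effaceable bifunctor would not exist. The rest is bookkeeping --- the Grothendieck-category generalities, the reduction to compatibility via $(\sss_1 \ast \sss_2)^{\perp} = \sss_1^{\perp} \cap \sss_2^{\perp}$, and the effacement-composition inclusion --- and the additivity index $\ast$ causes no trouble, because by Theorem~\ref{lexserre} both $W'$ and $W/W'$ inherit whatever additivity $W$ has.
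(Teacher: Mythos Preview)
Your proof is correct and follows essentially the same approach as the paper: reduce to compatibility of $\www^{\ast}_{\triangleleft}$ and $\www^{\ast}_{\triangleright}$ and identify their $\ast$-product with the weakly effaceable bifunctors by building an explicit two-step filtration of any such $W$. The paper's proof is terser (it takes the mirror construction, defining $W_1$ by effaceability in the first variable, and asserts ``it is readily seen'' that $W/W_1$ lies in the other class), whereas you spell out the one substantive point the paper leaves implicit, namely that the cobase-change axiom for inflations is what makes your $W'$ a genuine subfunctor, and you invoke Theorem~\ref{lexserre} to handle the additivity index~$\ast$.
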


\begin{proof}
It suffices to show that $\www^{\ast}_{\triangleleft}$ and $\www^{\ast}_{\triangleright}$ are compatible and that $\www^{\ast}_{\triangleleft} \ast \www^{\ast}_{\triangleright}$ consists of the weakly effaceable bifunctors. Suppose we have an exact sequence $0 \lra W_1 \lra F \lra W_2 \lra 0$ in $\Fun^{\ast}(\aaa^{\op} \times \bbb)$ with $W_1 \in \www^{\ast}_{\triangleleft}$ and $W_2 \in \www^{\ast}_{\triangleright}$.
Consider $\xi \in F(B,A)$. Since $W_2$ is weakly effaceable in the second variable, there is an inflation $A \lra A'$ such that the image $\xi' \in F(B,A')$ of $\xi$ gets mapped to zero in $W_2(B,A')$. But then $\xi'$ is itself the image of some $\xi'' \in W_1(B,A)$. Now we can find a deflation $B' \lra B$ effaceing the image of $\xi'$ in $F(B',A')$. Clearly, this is independent of exchanging the roles of $W_1$ and $W_2$. Conversely, consider a weakly effaceable $W$. Define $W_1 \subseteq W$ by letting $W_1(B,A) \lra W(B,A)$ contain all elements $\xi$ that can be effaced in the first variable. It is readily seen that the quotient $W/W_1$ is weakly effaceable in the second variable.
\end{proof}

\section{Derived sheaf categories}\label{sec.2}

In this section we investigate the derived functors of the various inclusions of (bi)sheaf categories into (bi)functor categories of the previous section.

\subsection{Models of derived functors}
In this subsection we prove Lemma \ref{lemmodel} on the existence of dg models of certain derived functors.
Let $\ccc$ be a small exact category. Let $\bar{\ccc} \lra \ccc$ be a $k$-cofibrant dg resolution of the $k$-linear category $\ccc$. Consider $\iota: \bar{\ccc} \lra \ccc \lra  \Lex(\ccc) \lra C(\Lex(\ccc))$ as an object in the model category of dg functors $\mathsf{DgFun}(\bar{\ccc}, C(\Lex(\ccc^{\op})))$ of \cite[Proposition 5.1]{lowenvandenbergh2}. Then a fibrant replacement $\iota \lra E$ yields a dg functor $$E: \bar{\ccc} \lra \mathsf{Fib}(C(\Lex(\ccc)))$$ and fibrant replacements $C \lra E(C)$ natural in $C \in \bar{\ccc}$.

Now consider a left exact functor $F: \Lex(\ccc) \lra \Mod(k)$. It gives rise to a dg functor
$$F: \mathsf{Fib}(C(\Lex(\ccc))) \lra C(k).$$
The composition
$$FE: \bar{\ccc} \lra C(k)$$
induces a functor
$$RF: \ccc \cong H^{0}\bar{\ccc} \lra H^{0}C(k) \lra D(k)$$
which is a derived functor of $F$ (with restricted domain). Furthermore the natural functor
$$\mathsf{DgFun}(\bar{\ccc}, C(k)) \lra \Fun(\ccc, D(k))$$
clearly descends to a funcor
$$D(\bar{\ccc}^{\op}) \lra \Fun(\ccc, D(k)).$$
Next we will replace $FE$ by an honest dg functor $\ccc \lra C(k)$. To this end we note that $\bar{\ccc} \lra \ccc$ induces an equivalence of categories $D(\ccc^{\op}) \lra D(\bar{\ccc}^{\op})$. Let $\ovl{RF}: \ccc \lra C(k)$ be any representative in $C(\ccc^{\op}) = C(\Mod(\ccc^{\op}))$ of a pre-image of $FE$ under this equivalence.
Then  the induced functor $\ccc \lra H^0C(k) \lra D(k)$ is a derived functor of $F$ (with restricted domain).
We have thus proven:

\begin{lemma}\label{lemmodel}
Let $\ccc$ be a small exact category and $F: \Lex(\ccc) \lra \Mod(k)$ a left exact functor. There exists a complex $\ovl{RF} \in C(\Mod(\ccc^{\op}))$ such that the corresponding dg functor $\ovl{RF}: \ccc \lra C(k)$ induces a restriction $\ccc \lra H^0C(k) \lra D(k)$ of a derived functor of $F$.
\end{lemma}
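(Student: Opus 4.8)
The plan is to follow exactly the construction sketched just before the Lemma statement, and present it as a chain of reductions, each of which is essentially a citation to machinery already in place. First I would fix a $k$-cofibrant dg resolution $\bar{\ccc} \lra \ccc$ and form the dg functor $\iota: \bar{\ccc} \lra C(\Lex(\ccc))$ obtained by composing with the Yoneda-type embedding $\ccc \lra \Lex(\ccc)$ and the inclusion into complexes. The key external input is \cite[Proposition 5.1]{lowenvandenbergh2}, which provides a model structure on $\mathsf{DgFun}(\bar{\ccc}, C(\Lex(\ccc)))$; taking a fibrant replacement $\iota \lra E$ gives a dg functor $E: \bar{\ccc} \lra \mathsf{Fib}(C(\Lex(\ccc)))$ together with componentwise fibrant replacements $C \lra E(C)$ that are natural in $C$. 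This is the step that does the real work: it replaces the objects $C$ (viewed in $C(\Lex(\ccc))$) by objects on which the left exact functor $F$ may be evaluated to compute its right derived functor.

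Next I would post-compose with $F$. Since $F: \Lex(\ccc) \lra \Mod(k)$ is left exact and we have resolved into fibrant (hence, in the relevant sense, $F$-acyclic) objects, the composite $FE: \bar{\ccc} \lra C(k)$ computes $RF$ on the objects of $\ccc$; concretely, passing to $H^0$ one gets $\ccc \cong H^0\bar{\ccc} \lra H^0 C(k) \lra D(k)$, which is (the restriction to $\ccc$ of) a derived functor of $F$. I would remark that $FE$ is a priori only a dg functor on the resolution $\bar{\ccc}$, not on $\ccc$ itself, so a further step is needed to get an honest complex of $\ccc^{\op}$-modules.

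For that last step I would use that the map $\bar{\ccc} \lra \ccc$ induces an equivalence $D(\ccc^{\op}) \lra D(\bar{\ccc}^{\op})$, and that a dg functor $\bar{\ccc} \lra C(k)$ is the same datum as an object of $C(\Mod(\bar{\ccc}^{\op})) = C(\bar{\ccc}^{\op})$, hence defines a class in $D(\bar{\ccc}^{\op})$. Pulling $FE$ back along the equivalence and choosing any representative $\ovl{RF} \in C(\Mod(\ccc^{\op}))$ of the resulting class in $D(\ccc^{\op})$ gives a complex whose associated dg functor $\ovl{RF}: \ccc \lra C(k)$ is isomorphic to $FE$ in $D(\bar{\ccc}^{\op})$, and therefore induces the same functor $\ccc \lra H^0 C(k) \lra D(k)$ up to natural isomorphism. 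This is exactly the asserted statement.

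The main obstacle — really the only nontrivial point — is the first reduction: one must know that the chosen model structure on $\mathsf{DgFun}(\bar{\ccc}, C(\Lex(\ccc)))$ has enough fibrant objects and that evaluation of the left exact functor $F$ on a fibrant replacement genuinely computes $RF$ rather than some other functor. Since $\Lex(\ccc)$ is a Grothendieck category (indeed locally finitely presented, by Proposition \ref{epgen}) this is standard homological algebra packaged into \cite[Proposition 5.1]{lowenvandenbergh2}, so in the write-up I would simply invoke that reference and spend the bulk of the argument making the bookkeeping between dg functors on $\bar{\ccc}$, complexes of $\bar{\ccc}^{\op}$-modules, and complexes of $\ccc^{\op}$-modules explicit, exactly as in the paragraph preceding the Lemma. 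Everything else is formal manipulation with the equivalence $D(\ccc^{\op}) \simeq D(\bar{\ccc}^{\op})$.
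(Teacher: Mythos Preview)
Your proposal is correct and follows exactly the paper's own argument, which is the construction given in the paragraphs immediately preceding the Lemma; indeed, the paper concludes that passage with ``We have thus proven:'' and then states Lemma~\ref{lemmodel}. Your added commentary on why fibrant replacement in the model structure of \cite[Proposition 5.1]{lowenvandenbergh2} suffices to compute $RF$ is a reasonable gloss but does not depart from the paper's approach.
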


\subsection{Derived localizations}
Next we will investigate the derived functors of the localizations of \S \ref{parbimodules}. The following general fact will be useful.
Consider a localization $i: \ccc \lra \ddd$ of Grothendieck categories with exact left adjoint $a: \ddd \lra \ccc$. We have a derived adjoint pair $Ri: D(\ccc) \lra D(\ddd)$ and $La = a: D(\ddd) \lra D(\ccc)$.

\begin{proposition}\label{fff}
The functor $Ri$ is fully faithful.
\end{proposition}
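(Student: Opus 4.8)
The plan is to use the standard criterion that a right adjoint is fully faithful if and only if the unit of the adjunction is an isomorphism. Here the adjunction in question is the derived pair $(La, Ri) = (a, Ri)$, so I must show that the unit $\id_{D(\ccc)} \lra Ri \circ a$ is an isomorphism on all of $D(\ccc)$. Equivalently, since $a$ is already exact on the abelian level, I must show that the counit $a \circ Ri \lra \id_{D(\ddd)}$ becomes an isomorphism after applying $a$ — but the cleanest route is the unit: for $X \in D(\ccc)$, the natural map $X \lra Ri(aX)$ should be a quasi-isomorphism, and since $aX = X$ (as $a$ restricted to $\ccc$, viewed inside $\ddd$ via $i$, is the identity — more precisely $ai \cong \id_\ccc$), this reduces to showing $X \lra Ri(X)$ is an isomorphism, i.e. that the derived unit of the \emph{original} adjoint pair $(a,i)$ on $D(\ccc)$ is an isomorphism.

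The key steps I would carry out are as follows. First, recall that since $i: \ccc \lra \ddd$ is fully faithful with exact left adjoint $a$, the unit $\id_\ccc \lra ai$ is an isomorphism, and $\ccc$ is identified with a localizing Serre subcategory $\sss = \ker(a)^{\perp}$ — actually $\ccc = \ddd/\sss$ where $\sss = \ker a$. Second, to compute $Ri$ I need an adapted class: I would take $i$-acyclic objects to be the injectives of $\ccc$, or better, use that $i$ sends injectives of $\ccc$ to $a$-acyclic (indeed injective, since $i$ has exact left adjoint $a$ hence preserves injectives... wait, it's the right adjoint that preserves injectives, so $i$ preserves injectives) objects of $\ddd$. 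Then for $X \in D(\ccc)$, represented by a complex of injectives $I^\bullet$, one has $Ri(X) = i(I^\bullet)$ and $a(i(I^\bullet)) \cong (ai)(I^\bullet) \cong I^\bullet$ via the unit isomorphism. Third, I need to check this identification is compatible with the derived functor structure, i.e. that the composite $X \lra Ri(aX)$ — when $aX$ is computed and $Ri$ applied — is precisely the map induced termwise by the unit $\id \lra ai$, which is a quasi-isomorphism because it is already a termwise isomorphism.

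The main obstacle, and the only real subtlety, is justifying that $Ri$ can be computed via injective resolutions in $\ccc$ and that $i$ carries these to objects computing the derived functor correctly — in other words, that injective objects of $\ccc$ form a class adapted to $i$ for the purpose of this computation, together with the bookkeeping that the derived unit is represented by the termwise unit map. Since $\ccc$ and $\ddd$ are Grothendieck categories they have enough injectives and $D(\ccc)$, $D(\ddd)$ are the usual unbounded derived categories; $i$ being fully faithful exact with exact left adjoint $a$ means $i$ preserves injectives (being right adjoint to the exact functor $a$). So for an injective $J \in \ccc$, $Ri(J) = i(J)$ with no higher derived terms, and $a(i(J)) = J$. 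Passing to complexes of injectives and then to arbitrary objects of $D(\ccc)$ by resolution, the natural transformation $\id_{D(\ccc)} \lra Ri \circ La$ is a quasi-isomorphism, so by the adjoint functor criterion $Ri$ is fully faithful. I would phrase the final write-up around this criterion, citing the unit isomorphism $\id_\ccc \cong ai$ as the crux and noting that exactness of $a$ is what makes $La = a$ and what lets the identification pass to the derived level without correction terms.

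\begin{proof}
We use the criterion that a right adjoint functor is fully faithful precisely when the unit of the adjunction is an isomorphism. Applied to the derived adjoint pair $(La,Ri) = (a, Ri)$, it suffices to show that the unit
$$\eta: \id_{D(\ccc)} \lra Ri \circ a$$
is an isomorphism. Since $i$ is fully faithful and $a$ is exact, the underived unit $\id_{\ccc} \lra ai$ is an isomorphism of functors on $\ccc$.

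Because $a$ is exact and left adjoint to $i$, the functor $i$ preserves injective objects. As $\ccc$ is a Grothendieck category it has enough injectives, and $D(\ccc)$ is the unbounded derived category. Fix $X \in D(\ccc)$ and a quasi-isomorphism $X \lra I^{\bullet}$ with $I^{\bullet}$ a (K-injective, or bounded-below, in the relevant generality) complex of injectives of $\ccc$. Then $Ri(X) \cong i(I^{\bullet})$, computed termwise, since $i$ is exact and sends injectives to $a$-acyclic (indeed injective) objects of $\ddd$. Moreover $a(X) \cong I^{\bullet}$ as an object of $D(\ddd)$ via the termwise isomorphism $I^{\bullet} \cong (ai)(I^{\bullet})$, and applying $Ri$ gives $Ri(a(X)) \cong i(I^{\bullet})$ again. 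Under these identifications the unit $\eta_X$ is represented by the morphism of complexes induced termwise by $\id_{\ccc} \lra ai$, which is a termwise isomorphism, hence a quasi-isomorphism. Therefore $\eta_X$ is an isomorphism in $D(\ccc)$, and this holds naturally in $X$.

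Since the unit $\eta$ is an isomorphism, $Ri$ is fully faithful.
\end{proof}
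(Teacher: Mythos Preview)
Your argument has the right ingredients, but the adjunction bookkeeping is scrambled. In the derived adjunction $a \dashv Ri$ with $a: D(\ddd) \to D(\ccc)$ and $Ri: D(\ccc) \to D(\ddd)$, the unit is $\id_{D(\ddd)} \to Ri \circ a$ and the counit is $a \circ Ri \to \id_{D(\ccc)}$. The criterion you invoke is stated backwards: the \emph{right} adjoint $Ri$ is fully faithful if and only if the \emph{counit} is an isomorphism, not the unit. Consequently the map you write, ``$\eta: \id_{D(\ccc)} \to Ri \circ a$'', does not even type-check ($Ri \circ a$ is an endofunctor of $D(\ddd)$), and expressions like ``$a(X)$'' for $X \in D(\ccc)$ are meaningless.

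Once the labels are corrected, however, your computation does prove the right thing: for $X \in D(\ccc)$ resolved by a K-injective complex $I^\bullet$, you have $Ri(X) \cong i(I^\bullet)$ and then $a(Ri(X)) \cong (ai)(I^\bullet) \cong I^\bullet \cong X$ via the underived counit $ai \cong \id_\ccc$, so the derived counit $a \circ Ri \to \id_{D(\ccc)}$ is an isomorphism and $Ri$ is fully faithful. One technical point you pass over: in the unbounded setting $I^\bullet$ must be K-injective, and you are implicitly using that $i$ carries K-injective complexes of $C(\ccc)$ to K-injective complexes of $C(\ddd)$. This is true (because $i$ has an exact left adjoint), but deserves a sentence.

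The paper argues slightly differently: it puts the injective model structures on $C(\ccc)$ and $C(\ddd)$, notes that $a$ preserves cofibrations and weak equivalences so that $i$ preserves fibrations and fibrant objects, and then verifies full faithfulness directly on hom-complexes: for fibrant $E,F$ in $C(\ccc)$ one has $\RHom(Ri(E),Ri(F)) = \Hom(i(E),i(F)) = \Hom(E,F) = \RHom(E,F)$, using full faithfulness of $i$ at the abelian level. Your (corrected) counit argument and the paper's hom-set computation are two sides of the same coin; the model-categorical phrasing has the advantage of handling the K-injectivity issue cleanly and in one line.
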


\begin{proof}
Endow $C(\ccc)$ and $C(\ddd)$ with the injective model structures for which cofibrations are pointwise monomorphisms and weak equivalences are quasi-isomorphisms. Since $a$ preserves both of these classes, by adjunction $i$ preserves fibrations and fibrant objects. For fibrant objects $E$ and $F$ in $C(\ccc)$ we have $\RHom(Ri(E), Ri(F)) = \RHom(i(E), i(F)) = \Hom(i(E), i(F)) = \Hom(E,F) = \RHom(E,F)$.\end{proof}

\subsection{The derived category of left exact modules}\label{parderlex}

Let $\ccc$ be a small exact category. We will now characterize the essential image of  $Ri: D^+(\Lex(\ccc)) \lra D^+(\Mod(\ccc))$.

\begin{definition}\label{cohom}
Let $\ccc$ be an exact category and $\ttt$ a triangulated category. A functor $F: \ccc \lra \ttt$ is called \emph{cohomological} if for every conflation $A \lra B \lra C$ in $\ccc$, the image under $F$ can be completed into a triangle $F(A) \lra F(B) \lra F(C) \lra F(A)[1]$ in $\ttt$.

A complex $K \in C(\Fun(\ccc))$ is called \emph{cohomological} if the induced functor $\ccc^{\op} \lra C(k) \lra D(k)$ is cohomological.
\end{definition}

\begin{examples}\label{excohom}
\begin{enumerate}
\item For a Grothendieck category $\aaa$, the natural functor $\aaa \lra D(\aaa)$ is cohomological.
\item If $\ccc' \lra \ccc$ is an exact functor between exact categories, $\ttt \lra \ttt'$ is a triangulated functor between triangulated categories, and $\ccc \lra \ttt$ is cohomological, then the composition $\ccc' \lra \ttt'$ is cohomological too.
\end{enumerate}
\end{examples}

\begin{proposition}\label{prop1}
Let $K \in C(\Mod(\ccc))$ be a bounded below complex. The following are equivalent:
\begin{enumerate}
\item $K \cong Ri (L)$ for some $L \in D(\Lex(\ccc))$;
\item $K \cong Ri (a(K))$;
\item $\RHom(W, K) = 0$ for every weakly effaceable $W$;
\item $K$ is cohomological.
\end{enumerate}
The implications from $(i)$ to $(j)$ with $i \leq j$ hold without the boundedness assumption on $K$.
\end{proposition}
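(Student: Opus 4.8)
\emph{Overall strategy.} The plan is to prove the cycle $(1)\Rightarrow(2)\Rightarrow(3)\Rightarrow(4)\Rightarrow(2)$ together with the trivial $(2)\Rightarrow(1)$ (take $L=a(K)$), arranging matters so that the boundedness hypothesis intervenes only in the implication $(4)\Rightarrow(2)$; this accounts for the last sentence of the statement. Throughout I work in $D(\Mod(\ccc))$, which is harmless since each of the four conditions is invariant under quasi-isomorphism — in particular ``cohomological'' is, because each evaluation functor $\Mod(\ccc)\to\Ab$, $M\mapsto M(C)$, is exact. I will use three standard facts: (a) since $Ri$ is fully faithful (Proposition~\ref{fff}), the derived counit $a\circ Ri\to\id$ is an isomorphism, and a module $W$ is weakly effaceable if and only if $a(W)=0$; (b) $\RHom_{\Mod(\ccc)}(\ccc(-,C),N)=N(C)$ because representables are projective, and $a(\ccc(-,C))=\ccc(-,C)$ because representables lie in $\Lex(\ccc)$; (c) $i$ preserves injectives (its left adjoint $a$ being exact), so $Ri$ is left $t$-exact and in particular preserves $D^{+}$.

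\emph{The formal implications.} For $(1)\Rightarrow(2)$, apply the exact functor $a$ (so $La=a$) to $K\cong Ri(L)$ and use (a) to get $a(K)\cong L$, hence $Ri(a(K))\cong K$. For $(2)\Rightarrow(3)$, if $W$ is weakly effaceable then by the derived adjunction and (a), $\RHom(W,K)\cong\RHom(W,Ri(a(K)))\cong\RHom_{\Lex(\ccc)}(a(W),a(K))=0$. For $(3)\Rightarrow(4)$, fix a conflation $A\xrightarrow{\kappa}B\xrightarrow{\lambda}C$ and, inside $\Mod(\ccc)$, set $I=\Beeld(\ccc(-,\lambda))$ and $Q=\Cokern(\ccc(-,\lambda))$, giving short exact sequences $0\to\ccc(-,A)\to\ccc(-,B)\to I\to0$ (exactness because $\kappa$ is monic and $\kappa=\ker\lambda$) and $0\to I\to\ccc(-,C)\to Q\to0$. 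Here $Q$ is weakly effaceable: for $x\colon D\to C$, pulling $\lambda$ back along $x$ (an exact-category axiom, Proposition~\ref{propcond}) yields a deflation $D'\to D$ over which $x$ factors through $\lambda$, so the class of $x$ dies in $Q(D')$. Applying $\RHom(-,K)$ to the second sequence and invoking $\RHom(Q,K)=0$ shows the natural map $K(C)=\RHom(\ccc(-,C),K)\to\RHom(I,K)$ is an isomorphism; feeding this into the triangle produced by $\RHom(-,K)$ on the first sequence yields $K(C)\xrightarrow{K(\lambda)}K(B)\xrightarrow{K(\kappa)}K(A)\to K(C)[1]$, so $K$ is cohomological.

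\emph{The crux: $(4)\Rightarrow(2)$.} Let $\eta\colon K\to Ri(a(K))$ be the unit and $T=\cone(\eta)$. By (a), $a(\eta)$ is an isomorphism (triangle identity together with invertibility of the counit), so $a(T)=0$; since $a$ is exact, every cohomology module $H^{n}(T)$ is weakly effaceable. By (c), $Ri(a(K))$ and hence $T$ lie in $D^{+}$. Next, $Ri(a(K))$ is cohomological: the sequence $0\to\ccc(-,A)\to\ccc(-,B)\to\ccc(-,C)\to0$ is exact in $\Lex(\ccc)$ for every conflation, and applying $\RHom_{\Lex(\ccc)}(-,a(K))$, then using (b) and the $(a,Ri)$-adjunction, turns it into the required triangle in the variables $C,B,A$. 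Since cohomological complexes are stable under cones (the nine lemma, applied after the exact evaluation functors), $T$ is cohomological. Finally, suppose $T\neq0$ and let $n_{0}$ be least with $H^{n_{0}}(T)\neq0$. For a conflation $A\to B\xrightarrow{\lambda}C$ the triangle $T(C)\to T(B)\to T(A)\to T(C)[1]$ has a long exact cohomology sequence which, because $H^{i}(T)=0$ for $i<n_{0}$, forces $H^{n_{0}}(T)(\lambda)\colon H^{n_{0}}(T)(C)\to H^{n_{0}}(T)(B)$ to be monic; thus $H^{n_{0}}(T)$ sends every deflation to a monomorphism. But $H^{n_{0}}(T)$ is weakly effaceable, so each of its elements is killed by some deflation, forcing $H^{n_{0}}(T)=0$ — a contradiction. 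Hence $T=0$ and $K\cong Ri(a(K))$.

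\emph{Expected main obstacle.} Everything except $(4)\Rightarrow(2)$ is formal bookkeeping with the localization pair $(a,Ri)$ and with the presentation of a conflation by representables. The real content is $(4)\Rightarrow(2)$: the ``bootstrap'' that a bounded-below cohomological complex whose cohomology modules are all weakly effaceable must vanish. This is precisely where boundedness is indispensable — one needs a lowest nonzero cohomology degree — and it relies on first pinning down the three auxiliary facts (invertibility of the counit $aRi\to\id$, left $t$-exactness of $Ri$, and stability of ``cohomological'' under cones), each routine in isolation but all required here.
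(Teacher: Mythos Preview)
Your proof follows essentially the same path as the paper's: the same cycle of implications, the same use of the four-term sequence $0 \to \ccc(-,A) \to \ccc(-,B) \to \ccc(-,C) \to Q \to 0$ for $(3)\Rightarrow(4)$, and the same cone-and-bootstrap argument for $(4)\Rightarrow(2)$. You are slightly more explicit than the paper in first verifying that $Ri(a(K))$ is itself cohomological and in naming the step (``nine lemma'') by which the cone inherits this property, whereas the paper simply asserts that the cone ``remains cohomological''.
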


\begin{proof}
The equivalence of (1) and (2) is obvious since, by Proposition \ref{fff}, $a Ri \cong 1$.

To see that (2) implies (3) we take $K$ as in (2) and $W$ weakly effaceable and we write $\RHom(W,K) = \RHom(W, Ri(a(K)) = \RHom(a(W),a(K)) = 0$ since $a(W) = 0$.

To show that (3) implies (4),  suppose that $K$ satisfies (3) and consider a conflation $A \lra B \lra C$. There is an associated exact sequence $0 \lra \aaa(-,A) \lra \aaa(-,B) \lra \aaa(-,C) \lra W \lra 0$ in $\Mod(\ccc)$ in which $W$ is weakly effaceable, proving (4).

To show that (4) implies (1), consider the adjunction morphism $K \lra Ri (a(K))$. We are to show that the cone $L$ is acyclic in $C(\Mod(\ccc))$. Then $L$ remains cohomological, and since $a(L)$ is acyclic in $C(\Lex(\ccc))$ the cohomology objects $H^i$ of $L$ are weakly effaceable. Since $L$ is bounded below, obviously there is an $n$ with $H^i = 0$ for all $i \leq n$. Let us prove that $H^i = 0$ implies $H^{i+1} = 0$. Consider $\xi \in H^{i+1}(C)$. There is a conflation $A \lra B \lra C$ such that $H^{i+1}(C) \lra H^{i+1}(B)$ maps $\xi$ to zero. The long exact cohomology sequence induced by the triangle $L(A) \lra L(B) \lra L(C) \lra L(A)[1]$ contains $\dots \lra H^i(A) \lra H^{i+1}(C) \lra H^{i+1}(B) \lra \dots$ so we conclude that $\xi = 0$. Consequently $H^{i+1} = 0$.
\end{proof}

We have the following partial counterpart for the inclusion 
$Ri': D^{+}(\Sh(\ccc)) \lra D^{+}(\Fun(\ccc))$.

\begin{proposition}\label{prop1bis}
Let $K \in C(\Fun(\ccc))$ be a bounded below complex. If $K$ is cohomological, then $K \cong Ri'(a'(K))$.
\end{proposition}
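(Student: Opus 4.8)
The statement is the non-additive counterpart of the implication (4)$\Rightarrow$(1) in Proposition \ref{prop1}, and I would mirror that proof as closely as possible, watching out for the points where additivity was used. So the plan is: consider the adjunction unit $K \lra Ri'(a'(K))$, let $L$ be its cone, and show that $L$ is acyclic in $C(\Fun(\ccc))$; this is equivalent to the desired isomorphism $K \cong Ri'(a'(K))$.

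First I would record that $L$ is again cohomological: $K$ is cohomological by hypothesis, $Ri'(a'(K))$ is cohomological because $a'(K)$ is a complex in the Grothendieck category $\Sh(\ccc)$ and the natural functor $\Sh(\ccc) \lra D(\Sh(\ccc))$ is cohomological (Examples \ref{excohom}(1)), and the restriction functor $Ri'$ to $D(\Fun(\ccc))$ is triangulated, so $Ri'(a'(K))$ is cohomological by Examples \ref{excohom}(2); then the third term $L$ of the triangle is cohomological as well since "cohomological" for a complex means the induced functor $\ccc^{\op} \lra D(k)$ sends conflations to triangles, and the class of such functors is closed under cones. Next, since $a'$ is exact, $a'(L)$ is the cone of $a'(K) \lra a'Ri'(a'(K)) \cong a'(K)$, which is the identity by Proposition \ref{fff} (i.e. $a'Ri' \cong 1$), so $a'(L)$ is acyclic; hence each cohomology functor $H^i(L) \in \Fun(\ccc)$ lies in $\Kern(a') = \www_{\Lambda}$, i.e. is weakly effaceable. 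Using boundedness below, I fix $n$ with $H^i(L) = 0$ for $i \leq n$, and run the same inductive argument as in Proposition \ref{prop1}: given $\xi \in H^{i+1}(L)(C)$, choose a conflation $A \lra B \lra C$ with $H^{i+1}(L)(C) \lra H^{i+1}(L)(B)$ killing $\xi$ (possible since $H^{i+1}(L)$ is weakly effaceable), and read off $\xi = 0$ from the long exact sequence of the triangle $L(A) \lra L(B) \lra L(C) \lra L(A)[1]$, which exists precisely because $L$ is cohomological. This forces $H^{i+1}(L) = 0$, so $L$ is acyclic and the proposition follows.

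\textbf{Main obstacle.} The one place that deserves care is the claim that the class of cohomological complexes is closed under cones (and shifts), since this is what lets me pass from "$K$ and $Ri'(a'K)$ are cohomological" to "$L$ is cohomological". Concretely: a complex $K \in C(\Fun(\ccc))$ is cohomological iff for every conflation $A \lra B \lra C$ the triple $K(A) \lra K(B) \lra K(C)$ extends to a triangle in $D(k)$; given a distinguished triangle $K \lra K' \lra L \lra K[1]$ in $C(\Fun(\ccc))$ (which is what the cone construction produces, applied objectwise), evaluation at the conflation gives a morphism of triangles between $K(A)\to K(B)\to K(C)\to$ and $K'(A)\to K'(B)\to K'(C)\to$, and one wants the cones to again form a triangle $L(A)\to L(B)\to L(C)\to L(A)[1]$. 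This is exactly the octahedral/$3\times 3$ argument in a triangulated category, so it is standard, but it is the only non-formal ingredient; everything else ($a'$ exact, $a'Ri'\cong 1$, $\Kern(a') = $ weakly effaceables, the effacement induction) is either proved earlier in the excerpt or copied verbatim from Proposition \ref{prop1}. I would simply cite the closure of cohomological complexes under triangles as an elementary consequence of the triangulated structure on $D(k)$, without reproving it.

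Note that, unlike in Proposition \ref{prop1}, I do not get the converse implications here: the functors $K(-)$ need not be additive, so there is no analogue of the exact sequence $0 \to \aaa(-,A) \to \aaa(-,B) \to \aaa(-,C) \to W \to 0$ sitting inside $\Mod(\ccc)$, and in particular no reason for an arbitrary $Ri'(a'K)$-type object to be cohomological unless one already knows it. This is why the proposition is stated as a one-directional "partial counterpart", and the proof I have sketched is all that is being asserted.
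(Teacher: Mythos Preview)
Your proposal is correct and follows exactly the approach the paper intends: the paper's proof of Proposition~\ref{prop1bis} is literally the single sentence ``This is the same proof as for Proposition~\ref{prop1}'', and you have faithfully reproduced the (4)$\Rightarrow$(1) argument (cone of the adjunction unit, weakly effaceable cohomology, boundedness-below induction via the long exact sequence of the triangle). Your explicit justification that $L$ remains cohomological --- via closure of cohomological complexes under cones --- fills in a step the paper leaves tacit in both propositions, and your closing remark on why the converse fails in the non-additive setting is exactly the reason the paper calls this a ``partial counterpart''.
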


\begin{proof}
This is the same proof as for Proposition \ref{prop1}.
\end{proof}

\begin{remark}
Consider objects $A, B \in \Lex(\ccc)$. The fact that a cohomological complex $K \in C(\Lex(\ccc))$ resolving $B$ yields a cohomological complex $j'(K) \in \Sh(\ccc)$ resolving $j'(B)$ easily shows that the natural map $$\Ext^1_{\Lex(\ccc)}(A,B) \lra \Ext^1_{\Sh(\ccc)}(j'(A), j'(B))$$ is an isomorphism, a fact we already know from Proposition \ref{ext1}.
\end{remark}

\begin{corollary}
The functor $Ri: D^+(\Lex(\ccc)) \lra D^+(\Mod(\ccc))$ induces an equivalence $D^+(\Lex(\ccc)) \lra \widetilde{{D}^{+}}(\Lex(\ccc))$ where $\widetilde{{D}^{+}}(\Lex(\ccc)) \subseteq D^{+}(\Mod(\ccc))$ is the full subcategory of cohomological complexes.
\end{corollary}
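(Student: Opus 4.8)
The plan is to deduce the Corollary by combining the full faithfulness statement of Proposition \ref{fff} with the characterization of the essential image contained in Proposition \ref{prop1}; the quasi-inverse will be induced by the exact localization functor $a = La$.

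First I would check that $Ri$ carries $D^+(\Lex(\ccc))$ into $D^+(\Mod(\ccc))$: since $\Lex(\ccc)$ is a Grothendieck category it has enough injectives, a bounded-below complex admits a bounded-below injective resolution, and $Ri$ is computed on such a resolution by applying the additive functor $i$, which preserves the bound below. By Proposition \ref{fff}, $Ri \colon D(\Lex(\ccc)) \lra D(\Mod(\ccc))$ is fully faithful, and the restriction to the full subcategories $D^+$ is therefore fully faithful as well. It remains only to identify the essential image of this restricted functor with $\widetilde{{D}^{+}}(\Lex(\ccc))$.

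For this I would invoke Proposition \ref{prop1}. On the one hand, for $L \in D^+(\Lex(\ccc))$ the complex $Ri(L)$ satisfies condition (1) of that proposition, hence condition (4) --- the implications $(i) \Rightarrow (j)$ for $i \leq j$ hold there without any boundedness hypothesis --- so $Ri(L)$ is cohomological; thus $Ri$ does land in $\widetilde{{D}^{+}}(\Lex(\ccc))$. On the other hand, given a cohomological $K \in D^+(\Mod(\ccc))$, condition (4) holds, and since $K$ is bounded below the implication $(4) \Rightarrow (1)$ of Proposition \ref{prop1} applies and yields some $L \in D(\Lex(\ccc))$ with $K \cong Ri(L)$. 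The one point to be careful about is that a priori $L$ need not be bounded below; but by Proposition \ref{fff} we have $L \cong a(Ri(L)) \cong a(K)$, and $a$ is exact, so $L$ is bounded below, i.e. $L \in D^+(\Lex(\ccc))$. Hence $Ri$ restricted to $D^+(\Lex(\ccc))$ is essentially surjective onto $\widetilde{{D}^{+}}(\Lex(\ccc))$, and being fully faithful it is an equivalence onto this subcategory, with quasi-inverse the restriction of $a$.

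I do not expect a serious obstacle here: everything is a bookkeeping assembly of the two preceding propositions, and the only genuinely substantive ingredient --- the $(4) \Rightarrow (1)$ implication, which is precisely where boundedness is used --- has already been established in Proposition \ref{prop1}.
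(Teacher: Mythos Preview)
Your proposal is correct and is precisely the argument the paper has in mind: the corollary is stated without proof because it is an immediate consequence of Proposition~\ref{fff} (full faithfulness of $Ri$) together with the equivalence of conditions (1) and (4) in Proposition~\ref{prop1} for bounded-below complexes. Your extra care in checking that the preimage $L$ is itself bounded below (via $L \cong a(K)$ with $a$ exact) is a sensible detail that the paper leaves implicit.
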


Our main interest in Proposition \ref{prop1} stems from the following:

\begin{proposition}\label{mainint}
Consider a left exact functor $F': \Lex(\ccc)^{\op} \lra \Mod(k)$ with left exact restriction $F: \ccc^{\op} \lra \Mod(k)$ in $\Lex(\ccc)$.
Let $Ri(F)$ be the image of $F$ under $$Ri: D(\Lex(\ccc)) \lra D(\Mod(\ccc)).$$ Any representative $\ccc^{\op} \lra C(k)$ of $Ri(F)$ induces a functor $\ccc^{\op} \lra C(k) \lra  D(k)$ which is a restriction to $\ccc^{\op}$ of a derived functor of $F'$.
\end{proposition}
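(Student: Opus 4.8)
The plan is to build on Lemma~\ref{lemmodel} and Proposition~\ref{prop1}. The functor $F' \colon \Lex(\ccc)^{\op} \lra \Mod(k)$ is left exact, so by (the opposite-variable version of) Lemma~\ref{lemmodel} applied to $\ccc^{\op}$, there is a complex $\ovl{RF'} \in C(\Mod(\ccc^{\op})) = C(\Fun(\ccc))$ whose associated dg functor $\ccc^{\op} \lra C(k)$ induces a restriction to $\ccc^{\op}$ of a derived functor $RF'$ of $F'$. The goal is then to identify this complex $\ovl{RF'}$, viewed as an object of $D(\Mod(\ccc))$ (after the appropriate identification of $\ccc$-modules with $\ccc^{\op}$-functors in the relevant variable), with $Ri(F)$, where $F = F'|_{\ccc^{\op}}$ is the left exact restriction.

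\textbf{Key steps.} First I would observe that $\ovl{RF'}$ is a cohomological complex in $C(\Mod(\ccc))$: indeed, for any conflation $A \lra B \lra C$ in $\ccc$, the sequence $0 \lra \ccc(-,A) \lra \ccc(-,B) \lra \ccc(-,C) \lra 0$ is exact in $\Lex(\ccc)$ by the standard embedding property, hence (since $RF'$ is a derived functor of the left exact $F'$, which applied to such a short exact sequence of objects of $\Lex(\ccc)$ yields a triangle) the image $RF'(A) \lra RF'(B) \lra RF'(C) \lra RF'(A)[1]$ is a triangle in $D(k)$. Restricting to the subcategory $\ccc$ this says precisely that the induced functor $\ccc^{\op} \lra D(k)$ is cohomological, i.e. $\ovl{RF'}$ is cohomological in the sense of Definition~\ref{cohom}. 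Second, I would invoke Proposition~\ref{prop1}: a bounded-below cohomological complex $K \in C(\Mod(\ccc))$ satisfies $K \cong Ri(a(K))$. Applying this to $\ovl{RF'}$ gives $\ovl{RF'} \cong Ri(a(\ovl{RF'}))$ in $D(\Mod(\ccc))$. Third, I would identify $a(\ovl{RF'})$ with $F$ in $D(\Lex(\ccc))$: the zeroth cohomology of $\ovl{RF'}$, as a $\ccc$-module, is the restriction of $R^0 F' = F'$, namely $F$; since $F$ is already left exact (i.e. $a(F) = F$) and the higher cohomology of $\ovl{RF'}$ is killed by $a$ (being weakly effaceable, exactly as in the proof of $(4)\Rightarrow(1)$ in Proposition~\ref{prop1}), we get $a(\ovl{RF'}) \cong F$ in $D(\Lex(\ccc))$. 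Hence $\ovl{RF'} \cong Ri(F)$ in $D(\Mod(\ccc))$. Finally, any representative $\ccc^{\op} \lra C(k)$ of $Ri(F)$ is then quasi-isomorphic to the dg functor $\ovl{RF'}$, and by construction (Lemma~\ref{lemmodel}) the latter induces a restriction to $\ccc^{\op}$ of a derived functor of $F'$; the induced functor $\ccc^{\op} \lra D(k)$ depends only on the quasi-isomorphism class, so the statement follows.

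\textbf{Main obstacle.} The main delicate point is the bookkeeping around the two model-categorical constructions of derived functors — the one in Lemma~\ref{lemmodel} (via a cofibrant dg resolution $\bar\ccc$ and a fibrant replacement in dg functors) and the one implicit in $Ri$ (via the injective model structure on $C(\Mod(\ccc))$, as in Proposition~\ref{fff}) — and checking that the complex $\ovl{RF'}$ produced by the former genuinely represents the object $Ri(F)$ of the latter, rather than merely agreeing with it after applying $H^0$ or on the level of the cohomology functor. The cohomologicality argument together with Proposition~\ref{prop1} does most of the work here, since it pins down $\ovl{RF'}$ inside $D(\Mod(\ccc))$ up to canonical isomorphism once we know its image under $a$; so the real content is verifying that $a(\ovl{RF'}) \cong F$ in the derived category of $\Lex(\ccc)$, i.e. that the positive-degree cohomology of $\ovl{RF'}$ consists of weakly effaceable modules. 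This last fact is itself a consequence of cohomologicality (the same bootstrap as in the proof of Proposition~\ref{prop1}, using boundedness below), so no essentially new argument is needed — only care in stitching the pieces together.
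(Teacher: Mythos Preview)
Your overall plan mirrors the paper's: construct a complex $\ovl{RF'}$ representing the restriction of $RF'$ to $\ccc^{\op}$, show it is cohomological, apply Proposition~\ref{prop1} to obtain $\ovl{RF'} \cong Ri(a(\ovl{RF'}))$, and then identify $a(\ovl{RF'})$ with $F$. The gap is in this last identification. You assert that the higher cohomology $H^n(\ovl{RF'}) = R^nF'|_{\ccc^{\op}}$ is weakly effaceable ``exactly as in the proof of $(4)\Rightarrow(1)$ in Proposition~\ref{prop1}'', but that bootstrap only establishes $K \cong Ri(a(K))$ for a bounded-below cohomological $K$; it says nothing about $a(K)$ itself. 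Indeed, for any $L \in D^+(\Lex(\ccc))$ with $H^1(L) \neq 0$ the complex $K = Ri(L)$ is cohomological and yet $a(H^1(K)) = H^1(L) \neq 0$, so cohomologicality alone cannot force the positive-degree cohomology to be weakly effaceable. A separate argument is genuinely needed.

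The paper fills this gap by a different route. It passes to a larger universe $\mathsf V$ in which $\ddd = \Lex(\ccc)^{\op}$ becomes small, extends $F'$ to $\hat F''$ on $\mathsf V\text{-}\Lex(\ddd)$, and applies Lemma~\ref{lemmodel} there. The point of this detour is that the resulting derived functors $R^nF''$ are \emph{effaceable}: for $C \in \ccc$ one obtains an epimorphism $u\colon X \lra C$ in $\Lex(\ccc)$ with $R^nF''(u) = 0$. The crucial ingredient you do not invoke is Lemma~\ref{lemeff}, which then produces $C' \in \ccc$ with $C' \lra X \lra C$ still an epimorphism, so that the effacing morphism lies in $\ccc$ --- i.e.\ $R^nF'|_{\ccc^{\op}}$ is weakly effaceable in $\Mod(\ccc)$. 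Your ``opposite-variable'' shortcut to Lemma~\ref{lemmodel} is a reasonable way to produce $\ovl{RF'}$, but the universe enlargement in the paper is not mere bookkeeping: it is what supplies effaceability in the required direction, and Lemma~\ref{lemeff} is what transfers it back to $\ccc$.
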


\begin{proof}
Apart from our standard universe $\mathsf{U}$, take a larger universe $\mathsf{V}$ such that $\ddd = \Lex(\ccc)^{\op}$ is $\mathsf{V}$-small. Since $\ddd$ is abelian, we can extend $F'': \ddd \lra \Mod(k) \lra \mathsf{V}$-$\Mod(k)$ to a left exact functor
$$\hat{F}'': \mathsf{V}\text{-}\Lex(\ddd) \lra \mathsf{V}\text{-}\Mod(k): \colim_i D_i \longmapsto \colim_i F''(D_i).$$
Now we can apply Lemma \ref{lemmodel} to $\hat{F}''$. We thus obtain $\ovl{RF''} \in C(\mathsf{V}\text{-}\Mod(\ddd^{\op}))$ inducing a restriction $RF'': \ddd \lra \mathsf{V}\text{-}C(k) \lra \mathsf{V}\text{-}D(k)$ of a derived functor of $\hat{F}''$, which is itself a derived functor of $F''$. If we restrict $\ovl{RF''}$ to $\ovl{RF} \in C(\mathsf{V}\text{-} \Mod(\ccc))$, then this complex is such that the induced functor $\ccc^{\op} \lra \mathsf{V}\text{-}C(k) \lra \mathsf{V}\text{-}D(k)$ is a restriction of a derived functor of $F''$. By Examples \ref{excohom}, $\ovl{RF}$ is cohomological, whence, by Proposition \ref{prop1}, $\ovl{RF} \cong Ri(a(\ovl{RF}))$, where we consider $i: \mathsf{V}\text{-} \Lex(\ccc) \lra \mathsf{V}\text{-} \Mod(\ccc)$ and its left adjoint $a$.
Clearly the $n\text{-}th$ cohomology object of $\ovl{RF}$ corresponds to $H^n\ovl{RF}: \ccc^{\op} \lra \mathsf{V}\text{-} C(k) \lra \mathsf{V}\text{-} \Mod(k)$, which is the restriction to $\ccc^{\op}$ of the $n\text{-}th$ derived functor $R^nF'': \Lex(\ccc)^{\op} \lra \mathsf{V}\text{-} \Mod(k)$ of $F''$. Now $R^nF''$ is effaceable, so for $C \in \ccc$ there is an epimorphism $u: X \lra C$ in $\Lex(\ccc)$ such that $R^nF''(u) = 0$. By Lemma \ref{lemeff} there is a further morphism $v: C' \lra X$ in $\Lex(\ccc)$ with $C' \in \ccc$ such that $uv: C' \lra C$ remains an epimorphism. In particular, $H^n(\ovl{RF}) \in \mathsf{V}\text{-} \Mod(\ccc)$ is weakly effaceable for $n > 0$ and $H^0(\ovl{RF}) = F: \ccc^{\op} \lra \mathsf{V}\text{-} \Mod(k)$. Consequently, $a(\ovl{RF}) = F$. Moreover, since in fact $F: \ccc^{\op} \lra \mathsf{U} \text{-} \Mod$, we have $\ovl{RF} \cong Ri(F) \in C(\mathsf{U} \text{-} \Mod(\ccc))$.
\end{proof} 

\begin{remark}
Any left exact functor $F: \ccc^{\op} \lra \Mod(k)$ in $\Lex(\ccc)$ has a left exact extension
$$F' = \Lex(\ccc)(-, F): \Lex(\ccc)^{\op} \lra \Mod(k).$$
Since $Ri(F)$ is obtained by replacing $F$ by an injective resolution in $\Lex(\ccc)$, Proposition \ref{mainint} is a kind of balancedness result.
\end{remark}

\subsection{Localization in one of several variables}
Let $\AAA$ be a small $k$-cofibrant dg category and $i: \LLL \lra \ddd$, $a: \ddd \lra \LLL$ a localization between Grothendieck categories. Consider the induced localization
$$i\circ -: \mathsf{DgFun}(\AAA, C(\LLL)) \lra \mathsf{DgFun}(\AAA, C(\ddd)),$$
$$a\circ -: \mathsf{DgFun}(\AAA, C(\ddd)) \lra \mathsf{DgFun}(\AAA, C(\LLL))$$
where the involved categories are endowed with the model structure of \cite[Proposition 5.1]{lowenvandenbergh2}.

\begin{lemma}\label{lemEF}
If $F \in \mathsf{DgFun}(\AAA, C(\LLL))$ is such that for every $A \in \AAA$, $F(A)$ is fibrant, then $F$ is $(i \circ -)$-acyclic, i.e $R(i \circ -)(F) \cong iF$ in $\mathrm{Ho}(\mathsf{DgFun}(\AAA, C(\LLL)))$.
\end{lemma}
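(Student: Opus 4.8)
The plan is to compute $R(i\circ -)(F)$ by means of a fibrant replacement and to observe that, although $F$ itself need not be fibrant in the functor model structure, it is fibrant \emph{objectwise}, which is enough for the replacement to be harmless after applying $i$.

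First, recall from (the proof of) Proposition~\ref{fff} that, equipping $C(\LLL)$ and $C(\ddd)$ with the injective model structures, the exact functor $a$ is left Quillen with right adjoint $i$. Since weak equivalences and fibrations in $\mathsf{DgFun}(\AAA,-)$ are defined objectwise — this is precisely the feature of \cite[Proposition 5.1]{lowenvandenbergh2} already exploited in Lemma~\ref{lemmodel} — the functor $i\circ -$ preserves fibrations and trivial fibrations, so $i \circ -$ is right Quillen and $R(i\circ-)$ is computed by applying $i\circ -$ to a fibrant replacement $q\colon F \lra \widetilde F$ in $\mathsf{DgFun}(\AAA, C(\LLL))$. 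Here $\widetilde F$ is objectwise fibrant and $q$ is an objectwise quasi-isomorphism.

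The key point is that $i\colon C(\LLL) \lra C(\ddd)$ carries quasi-isomorphisms between fibrant complexes to quasi-isomorphisms: this is Ken Brown's lemma for the right Quillen functor $i$, or, more concretely, since every object of $C(\LLL)$ is cofibrant for the injective model structure, such a quasi-isomorphism is a chain homotopy equivalence and is therefore preserved by the additive functor $i$. Applying this objectwise, $i\circ -$ sends weak equivalences between objectwise fibrant dg functors to weak equivalences. Now $F$ is objectwise fibrant by hypothesis and $\widetilde F$ is objectwise fibrant by the previous paragraph, so $iq\colon iF \lra i\widetilde F$ is an objectwise quasi-isomorphism, i.e.\ a weak equivalence in $\mathsf{DgFun}(\AAA, C(\ddd))$; hence $R(i\circ -)(F) = i\widetilde F \cong iF$, as claimed.

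The argument is essentially formal; the one step that needs care is the bookkeeping of which dg functors are objectwise fibrant, so that a fibrant replacement of the objectwise fibrant $F$ is again objectwise fibrant — but this is built into the model structure of \cite[Proposition 5.1]{lowenvandenbergh2}, and is the same point used in the construction of $\ovl{RF}$ in Lemma~\ref{lemmodel}.
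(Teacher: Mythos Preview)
Your proof is correct and follows essentially the same route as the paper: take a fibrant replacement $F \to E$, observe that $E$ is objectwise fibrant, note that each $F(A) \to E(A)$ is a weak equivalence between fibrant (and cofibrant) objects and hence a homotopy equivalence preserved by $i$, and conclude that $iF \to iE$ is a weak equivalence. One small point of phrasing: you justify objectwise fibrancy of $\widetilde F$ by saying fibrations in $\mathsf{DgFun}(\AAA, C(\LLL))$ are ``defined objectwise'', whereas the paper instead invokes the standing hypothesis that $\AAA$ is $k$-cofibrant to ensure that evaluation at $A$ preserves fibrant objects---the conclusion you need is the same, but the reason is the cofibrancy of $\AAA$ rather than an objectwise description of the fibrations themselves.
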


\begin{proof}
Take a fibrant resolution $F \lra E$ in $\mathsf{DgFun}(\AAA, C(\LLL))$. Since $\AAA$ is $k$-cofibrant, for every $A \in \AAA$, $E(A)$ is fibrant. Consequently, every $F(A) \lra E(A)$ is a weak equivalence between fibrant objects, whence a homotopy equivalence. Since $i(F(A)) \lra i(E(A))$ remains a homotopy equivalence, $iF \lra iE$ is a weak equivalence as desired.
\end{proof}

\subsection{Sheaves in one of several variables}
As soon as we want to extend the results of the previous subsections to bimodules, flatness over the ground ring comes into play. The reason for this is that in the absence of flatness, injective resolutions of bimodules do not yield injective resolutions in individual variables. More precisely, we have the following situation. Let $\ccc$ be a small exact category and $\AAA$ a small $k$-linear category, and consider the category $\Mod(\AAA, \Mod(\ccc)) \cong \Mod(\AAA^{\op} \otimes \ccc).$ The localization $\Lex(\ccc) \lra \Mod(\ccc)$ gives rise to a localization
$$i_{\ccc}: \Mod(\AAA, \Lex(\ccc)) \lra \Mod(\AAA, \Mod(\ccc)).$$

\begin{lemma}\label{lemflat}
For every $A \in \AAA$, the projection $\mathrm{ev}_A: \Mod(\AAA, \Lex(\ccc)) \lra \Lex(\ccc): F \longmapsto F(A)$ has a left adjoint
given by $M \longmapsto (A' \longmapsto \AAA(A,A') \otimes_k M)$.
If $\AAA$ has $k$-flat homsets, then this adjoint is exact, and $\mathrm{ev}_A$ preserves injectives.
\end{lemma}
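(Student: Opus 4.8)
The statement has two parts. First, the existence of the left adjoint to $\mathrm{ev}_A$ is a standard fact about functor categories: the functor $M \longmapsto (A' \mapsto \AAA(A,A')\otimes_k M)$ is the ``representable-tensor'' left Kan extension along the inclusion of the one-object subcategory $\{A\}$, and the adjunction isomorphism
$$\Mod(\AAA, \Lex(\ccc))\big(A' \mapsto \AAA(A,A')\otimes_k M,\ F\big) \cong \Lex(\ccc)(M, F(A))$$
is the enriched Yoneda lemma, provided the left-hand functor actually lands in $\Mod(\AAA,\Lex(\ccc))$ and not merely in $\Mod(\AAA,\Mod(\ccc))$. So the first genuine point to check is that $A' \mapsto \AAA(A,A')\otimes_k M$ is $\Lambda$-left exact in the $\ccc$-variable whenever $M$ is. This is where $k$-flatness of the homsets enters already: for a deflation $\lambda$ with kernel $\kappa$ in $\ccc$, we must see that
$$0 \lra \AAA(A,A')\otimes_k M(C) \lra \AAA(A,A')\otimes_k M(D) \lra \AAA(A,A')\otimes_k M(K)$$
is exact, which follows from $\Lambda$-left exactness of $M$ together with flatness of $\AAA(A,A')$ over $k$ (flatness is exactly what lets us tensor a left exact sequence and preserve exactness). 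Under the flatness hypothesis this also shows the left adjoint is exact, being a pointwise tensor with flat modules.

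**Preservation of injectives.** Now I would invoke the general principle: if a left adjoint $L$ is exact, then its right adjoint preserves injectives. Here $L$ is the left adjoint just constructed, and its right adjoint is $\mathrm{ev}_A$, so exactness of $L$ (established in the previous step from $k$-flatness) immediately gives that $\mathrm{ev}_A$ preserves injectives. Concretely: if $E \in \Mod(\AAA,\Lex(\ccc))$ is injective, then $\Lex(\ccc)(-, E(A)) = \Mod(\AAA,\Lex(\ccc))(L(-), E)$ is exact as a composite of an exact functor with an exact functor, hence $E(A)$ is injective in $\Lex(\ccc)$.

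**Main obstacle.** The only place where anything can go wrong is the verification that $A' \mapsto \AAA(A,A')\otimes_k M$ is an object of $\Mod(\AAA,\Lex(\ccc))$ at all — i.e. that it is additive in the $\AAA$-variable (clear, since $\AAA(A,-)$ is additive and $\otimes_k M$ preserves finite sums) and $\Lambda$-left exact in the $\ccc$-variable. Without $k$-flatness the latter fails, and that is precisely the ``flatness over the ground ring comes into play'' remark preceding the lemma; so the proof should foreground that the well-definedness of the adjoint in the correct category, its exactness, and the injective-preservation conclusion are three consequences of the single input that $\AAA(A,A')$ is $k$-flat for all $A,A'$. I do not expect any subtlety beyond bookkeeping once flatness is in hand; the enriched Yoneda computation of the adjunction and the ``exact left adjoint $\Rightarrow$ right adjoint preserves injectives'' step are both formal.
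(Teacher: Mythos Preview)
Your unpacking is correct and is precisely what the paper's one-line proof (``This is clear'') amounts to: the adjunction is the enriched copower/Yoneda adjunction, exactness of the left adjoint under $k$-flatness is checked pointwise, and preservation of injectives by $\mathrm{ev}_A$ is the standard consequence of having an exact left adjoint.

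One small clarification is worth making. The lemma asserts existence of the left adjoint \emph{without} the flatness hypothesis (first sentence), whereas you argue that flatness is already needed for $A'\mapsto\AAA(A,A')\otimes_k M$ to land in $\Lex(\ccc)$. Both readings are defensible: if $\AAA(A,A')\otimes_k M$ denotes the pointwise tensor in $\Mod(\ccc)$, your observation is correct that left exactness can fail without flatness; if instead it denotes the copower of $M\in\Lex(\ccc)$ by the $k$-module $\AAA(A,A')$ computed \emph{in} the cocomplete category $\Lex(\ccc)$ (i.e.\ the sheafification of the pointwise tensor), then the left adjoint exists unconditionally and the formula is correct as stated. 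Under the flatness hypothesis the two interpretations agree, so your argument covers the only case that is subsequently used (Proposition~\ref{mainint2}). You have identified a genuine ambiguity the terse proof glosses over rather than made an error.
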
 

\begin{proof}
This is clear.
\end{proof}

Let $\Lex(\Lex(\ccc))$ denote the category of left exact additive functors $\Lex(\ccc)^{\op} \lra \Mod(k)$. The exact inclusion $s: \ccc \lra \Lex(\ccc)$ induces a restriction $\pi: \Lex(\Lex(\ccc)) \lra \Lex(\ccc): G \longmapsto Gs$
and the inclusion functor
$\iota: \Lex(\ccc) \lra \Lex(\Lex(\ccc)): F \longmapsto \Lex(\ccc)(-,F)$
satisfies $\pi \iota = 1_{\Lex(\ccc)}$.

Let $F': \AAA \lra \Lex(\Lex(\ccc))$ be an additive functor with restriction $F = \pi F': \AAA \lra \Lex(\ccc)$. The following result extends Proposition \ref{mainint} to modules left exact in one of several variables.

\begin{proposition}\label{mainint2}
Let $\AAA$, $\ccc$, $F'$ and $F$ be as above and let $Ri_{\ccc}(F)$ be the image of $F$ under
$$Ri_{\ccc}: D(\Mod(\AAA, \Lex(\ccc))) \lra D(\Mod(\AAA, \Mod(\ccc))).$$
If $\AAA$ has $k$-flat homsets, then for any $K \in C(\Mod(\AAA, \Mod(\ccc)))$ representing $Ri_{\ccc}(F)$ and for any $A \in \AAA$, $K(A) \in C(\Mod(\ccc))$ induces a functor $\ccc^{\op} \lra C(k) \lra D(k)$ which is a restriction to $\ccc^{\op}$ of a derived functor of $F'(A): \Lex(\ccc)^{\op} \lra \Mod(k)$.\end{proposition}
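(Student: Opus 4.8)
The plan is to reduce the statement to Proposition~\ref{mainint} by evaluating at the fixed object $A \in \AAA$. First I would record the elementary compatibility between the two sites: since $i_{\ccc}$ is post-composition with the inclusion $i \colon \Lex(\ccc) \lra \Mod(\ccc)$, and post-composition commutes strictly with evaluation at $A$, we have $\mathrm{ev}_A \circ i_{\ccc} = i \circ \mathrm{ev}_A$ as functors $\Mod(\AAA, \Lex(\ccc)) \lra \Mod(\ccc)$. Moreover, because kernels, cokernels and (co)limits in a functor category are computed argumentwise, both evaluation functors $\mathrm{ev}_A$ are exact and hence descend directly to the derived categories. The crucial extra input is Lemma~\ref{lemflat}: as $\AAA$ has $k$-flat homsets, $\mathrm{ev}_A \colon \Mod(\AAA, \Lex(\ccc)) \lra \Lex(\ccc)$ preserves injectives. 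Combining this with the left-exactness of $i$ and $i_{\ccc}$, I would conclude that the square of derived functors commutes, i.e.\ $\mathrm{ev}_A \circ Ri_{\ccc} \cong Ri \circ \mathrm{ev}_A$ on $D^+$; concretely, for an injective resolution $F \lra I^{\bullet}$ in $\Mod(\AAA, \Lex(\ccc))$ the complex $I^{\bullet}(A)$ is an injective resolution of $F(A)$ in $\Lex(\ccc)$, so applying $\mathrm{ev}_A$ to $Ri_{\ccc}(F) = i_{\ccc}(I^{\bullet})$ yields $i(I^{\bullet}(A))$, a representative of $Ri(F(A))$ for $Ri \colon D(\Lex(\ccc)) \lra D(\Mod(\ccc))$.

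Granting this, for any $K$ representing $Ri_{\ccc}(F)$ the complex $K(A) = \mathrm{ev}_A(K)$ represents $Ri(F(A)) \in D(\Mod(\ccc))$, where $Ri$ is applied to the object $F(A) = \mathrm{ev}_A(F) = \pi(F'(A)) \in \Lex(\ccc)$. Next I would verify that the pair $(F'(A), F(A))$ satisfies the hypotheses of Proposition~\ref{mainint}: by assumption $F'(A) \in \Lex(\Lex(\ccc))$ is a left exact functor $\Lex(\ccc)^{\op} \lra \Mod(k)$, and its restriction along $s \colon \ccc \lra \Lex(\ccc)$ is precisely $F(A) = \pi(F'(A))$. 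This restriction is left exact in $\Lex(\ccc)$, i.e.\ $\Lambda$-left exact, because every conflation $K \lra D \lra C$ in $\ccc$ becomes a short exact sequence $0 \lra \ccc(-,K) \lra \ccc(-,D) \lra \ccc(-,C) \lra 0$ in $\Lex(\ccc)$, and applying the left exact $F'(A)$ to it produces exactly the exact sequence \eqref{addlex}. Proposition~\ref{mainint} applied to $F'(A)$ then says that the functor $\ccc^{\op} \lra C(k) \lra D(k)$ induced by the representative $K(A)$ of $Ri(F(A))$ is the restriction to $\ccc^{\op}$ of a derived functor of $F'(A)$, which is the assertion.

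The one step that I expect to require real care is the commutation $\mathrm{ev}_A \circ Ri_{\ccc} \cong Ri \circ \mathrm{ev}_A$, and this is precisely the point where $k$-flatness of the homsets of $\AAA$ is indispensable: it is via Lemma~\ref{lemflat} that an injective resolution over $\AAA^{\op} \otimes \ccc$ restricts to an injective resolution over $\ccc$, which fails without flatness --- exactly the obstruction flagged at the opening of this subsection. Everything else --- exactness of $\mathrm{ev}_A$, the identification $F(A) = \pi(F'(A))$, and preservation of left-exactness under restriction along $s$ --- is routine bookkeeping.
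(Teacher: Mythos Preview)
Your proof is correct and follows essentially the same route as the paper: use Lemma~\ref{lemflat} (via $k$-flatness of $\AAA$) to see that an injective resolution $F \lra E$ in $\Mod(\AAA,\Lex(\ccc))$ evaluates to an injective resolution $F(A) \lra E(A)$ in $\Lex(\ccc)$, deduce $Ri_{\ccc}(F)(A) \cong Ri(F(A))$, and invoke Proposition~\ref{mainint}. The verification that $F(A)$ is $\Lambda$-left exact is redundant, since by hypothesis $F = \pi F'$ already takes values in $\Lex(\ccc)$, but this does no harm.
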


\begin{proof}
Let $F \lra E$ be an injective resolution of $F \in \Mod(\AAA, \Lex(\ccc))$. Then for every $A \in \AAA$, $F(A) \lra E(A)$ is an injective resolution in $\Lex(\ccc)$ by Lemma \ref{lemflat}.
Consequently, for the inclusion $i: \Lex(\ccc) \lra \Mod(\ccc)$, we have 
$Ri(F(A)) = i(E(A)) = i_{\ccc}(E)(A) = Ri_{\ccc}(F)(A)$ in $D(\Mod(\ccc))$ hence the result follows from  Proposition \ref{mainint}.
\end{proof}

If, in the first argument, we consider functors rather than modules, the flatness issue goes away. We are interested in the following application. Let $\bbb$ and $\aaa$ be small exact categories and let $F': \aaa \lra \Lex(\Lex(\bbb))$ be a possibly non-additive functor with restriction $F = \pi F': \aaa \lra \Lex(\bbb)$. Consider the inclusion $i_{\bbb}: \Fun(\aaa, \Lex(\bbb)) \lra \Fun(\aaa, \Mod(\bbb)).$

\begin{corollary}\label{maincor}
Let $\aaa$, $\bbb$, $F'$ and $F$ be as above and let $Ri_{\bbb}(F)$ be the image of $F$ under
$$Ri_{\bbb}: D(\Fun(\aaa, \Lex(\bbb))) \lra D(\Fun(\aaa, \Mod(\bbb))).$$
For any $K \in C(\Fun(\aaa, \Mod(\bbb)))$ representing $Ri_{\bbb}(F)$ and for any $A \in \aaa$, $K(A) \in C(\Mod(\bbb))$ induces a functor $\bbb^{\op} \lra C(k) \lra D(k)$ which is a restriction to $\bbb^{\op}$ of a derived functor of $F'(A): \Lex(\bbb)^{\op} \lra \Mod(k)$.
\end{corollary}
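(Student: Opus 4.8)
The plan is to follow the proof of Proposition~\ref{mainint2} almost verbatim, replacing the module-valued functor category $\Mod(\AAA,-)$ used there by the (non-additive) functor category $\Fun(\aaa,-)$. The only ingredient of that argument that used flatness was Lemma~\ref{lemflat}, which ensured that evaluation preserves injectives, so the first thing to do is to supply a flatness-free substitute. For each $A\in\aaa$ the evaluation functor $\ev_A\colon\Fun(\aaa,\Lex(\bbb))\lra\Lex(\bbb)$, $F\longmapsto F(A)$, has a left adjoint $L_A$ sending $M\in\Lex(\bbb)$ to the pointwise copower $A'\longmapsto\coprod_{\aaa(A,A')}M$, with a morphism $f\colon A'\lra A''$ acting by the reindexing $g\mapsto fg$; the adjunction isomorphism $\Fun(\aaa,\Lex(\bbb))(L_AM,G)\cong\Lex(\bbb)(M,G(A))$ is immediate by evaluating a natural transformation at $\id_A$ and invoking naturality.

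The crucial step is then to observe that $L_A$ is \emph{exact}. Indeed, a copower $\coprod_S(-)$ along a fixed set $S$ is an exact functor on any Grothendieck category (coproducts are exact in a Grothendieck category), and exactness in $\Fun(\aaa,\Lex(\bbb))$ is tested pointwise, so $L_A$ preserves short exact sequences. This is precisely where the passage from modules to functors eliminates the flatness hypothesis: the copower $\coprod_{\aaa(A,A')}M$ plays here the role of the tensor product $\AAA(A,A')\otimes_kM$ appearing in Lemma~\ref{lemflat}, but it is unconditionally exact. Since $L_A$ is exact, its right adjoint $\ev_A$ preserves injectives. Consequently, if $F\lra E$ is an injective resolution of $F$ in the Grothendieck category $\Fun(\aaa,\Lex(\bbb))$, then for every $A$ the complex $F(A)\lra E(A)$ is an injective resolution in $\Lex(\bbb)$.

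The remaining step is the computation of Proposition~\ref{mainint2}. For the inclusion $i\colon\Lex(\bbb)\lra\Mod(\bbb)$ we obtain in $D(\Mod(\bbb))$ the chain $Ri(F(A))=i(E(A))=i_{\bbb}(E)(A)=Ri_{\bbb}(F)(A)$, and since $\ev_A\colon\Fun(\aaa,\Mod(\bbb))\lra\Mod(\bbb)$ is exact it preserves quasi-isomorphisms, so any complex $K$ representing $Ri_{\bbb}(F)$ has $K(A)$ representing $Ri(F(A))$ in $D(\Mod(\bbb))$. Now $F(A)\in\Lex(\bbb)$ is left exact, $F'(A)\in\Lex(\Lex(\bbb))$ is a left exact functor $\Lex(\bbb)^{\op}\lra\Mod(k)$, and $F(A)=\pi F'(A)$ is its restriction along the canonical embedding $\bbb\lra\Lex(\bbb)$; so Proposition~\ref{mainint} applies to the pair $(F'(A),F(A))$ and shows that the functor $\bbb^{\op}\lra C(k)\lra D(k)$ induced by $K(A)$ is a restriction to $\bbb^{\op}$ of a derived functor of $F'(A)$, which is the assertion.

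I do not anticipate a serious obstacle: apart from the exactness of $L_A$, the argument is a line-by-line transcription of Proposition~\ref{mainint2}, and that exactness --- equivalently, the fact that $\ev_A$ preserves injectives with no flatness assumption --- is exactly the content of the remark preceding the statement that ``the flatness issue goes away''. The only mild point to check is that $\Fun(\aaa,\Lex(\bbb))$ is again a Grothendieck category, so that the injective resolution $F\lra E$ exists.
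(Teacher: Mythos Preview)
Your proof is correct and is essentially the argument underlying the paper's one-line proof. The paper simply observes that $\Fun(\aaa,\Lex(\bbb))\cong\Mod(\Z\aaa,\Lex(\bbb))$ for the free $\Z$-linear category $\Z\aaa$ on $\aaa$, whose hom-groups $\Z[\aaa(A,A')]$ are free (hence flat), and then invokes Proposition~\ref{mainint2} directly; your copower $\coprod_{\aaa(A,A')}M$ is literally $\Z[\aaa(A,A')]\otimes M$, so your exactness-of-$L_A$ check is exactly the flatness hypothesis of Lemma~\ref{lemflat} verified for $\AAA=\Z\aaa$.
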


\begin{proof}
This immediately follows from Proposition \ref{mainint2} by putting $\ccc = \bbb$ and $\AAA = \Z \aaa$, the free $\Z$-linear category on $\aaa$ (having $\Ob(\Z \aaa) = \Ob(\aaa)$ and $(\Z \aaa)(A,A') = \Z (\aaa(A,A'))$, the free abelian group on $\aaa(A,A')$), and noting that $\Fun(\aaa, \Lex(\bbb)) \cong\Mod(\Z \aaa, \Lex(\bbb))$.
\end{proof}

\subsection{Sheaves in two variables} \label{sub2var}

In this section we
consider sheaves in both variables. We start with a version of Proposition \ref{prop1}.
For small exact categories $\aaa$ and $\bbb$, consider the inclusions
$$i: \Fun_{\diamond}^{\ast}(\aaa^{\op} \times \bbb) \lra \Fun^{\ast}(\aaa^{\op} \times \bbb)$$
for $\ast \in \{ \varnothing, \triangleleft, \triangleright, \diamond\}$, along with the derived functors
$$Ri: D(\Fun_{\diamond}^{\ast}(\aaa^{\op} \times \bbb)) \lra D(\Fun^{\ast}(\aaa^{\op} \times \bbb)).$$
As usual, the left adjoints of $i$ and $Ri$ are denoted by $a$. 
For modules $F \in \Mod(\bbb)$ and $G \in \Mod(\aaa^{\op})$, $F \boxtimes G \in \Mod(\aaa^{\op} \otimes \bbb)$ denotes the bimodule with $(F \boxtimes G)(B,A) = F(B) \otimes G(A)$.

\begin{proposition}\label{bimodprop}
For $K \in C(\Fun^{\ast}(\aaa^{\op} \times \bbb))$, consider the following properties:
\begin{enumerate}
\item $K \cong Ri (L)$ for some $L \in D(\Fun^{\ast}_{\diamond}(\aaa^{\op} \times \bbb))$;
\item $K \cong Ri (a(K))$;
\item $\RHom(W, K) = 0$ for every weakly effaceable $W$;
\item $K$ is cohomological in both variables.
\item $K$ is cohomological in the first variable (i.e. for every $A \in \aaa$, the complex $K(-,A) \in C(Fun(\bbb))$ is cohomological).
\end{enumerate}

The following facts hold true:
\begin{enumerate}
\item[(i)] (1) and (2) are equivalent and (1) implies (3).
\item[(ii)] If $K$ is bounded below, then (4) implies (1).
\item[(iii)] If $k = \Z$ and $\ast = \triangleleft$, then (3) implies (5).
\item[(iv)] If $k$ is a field and $\ast = \diamond$, then (3) implies (4).
\end{enumerate}
\end{proposition}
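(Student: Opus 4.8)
The plan is to treat the four items in order, leaning on Proposition~\ref{prop1} and its bimodule analogues as a black box wherever possible. For part (i), the equivalence of (1) and (2) is formal: by Proposition~\ref{propwef} the inclusion $i$ is a localization with left adjoint $a$, and by Proposition~\ref{fff} we have $aRi \cong 1$, so $K \cong Ri(L)$ forces $L \cong a(K)$ and hence $K \cong Ri(a(K))$; conversely (2) is literally an instance of (1). The implication (1)$\Rightarrow$(3) is again the argument from Proposition~\ref{prop1}: write $\RHom(W, Ri(a(K))) = \RHom(a(W), a(K)) = 0$ since $a(W) = 0$ for $W$ weakly effaceable (using that the adjunction $(a, i)$ derives to $(a, Ri)$ because $a$ is exact). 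For part (ii), I would mimic the proof of Proposition~\ref{prop1}, (4)$\Rightarrow$(1), essentially verbatim: form the cone $L$ of $K \lra Ri(a(K))$, observe it stays cohomological in both variables and that $a(L)$ is acyclic, so its cohomology bifunctors $H^n(L)$ are weakly effaceable; then run the same two-step induction (bounded-belowness gives a starting point, and the long exact sequence attached to a conflation $A \lra B \lra C$ in $\aaa$ \emph{and} in $\bbb$ kills classes one cohomological degree at a time, in each variable separately). The key point is that "cohomological in both variables'' is exactly the two-variable strengthening of the single-variable hypothesis, so the effacement argument goes through in each slot.

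For part (iii), the hypothesis is $k = \Z$ and $\ast = \triangleleft$, i.e. $K$ is additive in the first variable. Fix $A \in \aaa$ and consider $K(-,A) \in C(\Fun(\bbb))$; I want to show it is cohomological, i.e. maps conflations in $\bbb$ to triangles. The idea is that $K(-,A)$ is computed, up to quasi-isomorphism, by restricting $K$ along $\bbb \ni B \mapsto (B, A)$, and the weak-effaceability hypothesis (3) restricts: if $W \in \Fun(\bbb)$ is weakly effaceable (in the single deflation topology of $\bbb$), then $W \boxtimes \Z\aaa(-,A)$ — or rather the appropriate free module $\aaa(-,A)$ tensored on — is weakly effaceable in $\Fun^{\triangleleft}(\aaa^{\op}\times\bbb)$ in the second variable, hence lies in $\www^{\triangleleft}_{\diamond}$, so $\RHom(W, K(-,A)) = \RHom(W \boxtimes \aaa(-,A), K) = 0$ by adjunction between the evaluation-at-$A$ functor and tensoring with $\aaa(-,A)$ (this is where $k = \Z$ and additivity in the first variable matter: the left adjoint to $\mathrm{ev}_A$ is $M \mapsto \Z\aaa(A,-) \otimes_\Z M$, which is exact because $\Z$ is hereditary/everything is flat over $\Z$, so it derives trivially). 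Then Proposition~\ref{prop1bis} (or Proposition~\ref{prop1}, (3)$\Rightarrow$(4) in one variable) shows $K(-,A)$ is cohomological. For part (iv), with $k$ a field and $\ast = \diamond$, the bifunctor $K$ is a complex of $\aaa^{\op}\otimes_k\bbb$-modules; here I would use that over a field every module is flat, so a weakly effaceable module in one variable tensored with a representable in the other stays weakly effaceable, and more to the point $\www^{\diamond}_{\diamond} = \www^{\diamond}_{\triangleleft} \ast \www^{\diamond}_{\triangleright}$ by Proposition~\ref{propwef}; condition (3) over all of $\www^{\diamond}_{\diamond}$ therefore gives vanishing against both $\www^{\diamond}_{\triangleleft}$ and $\www^{\diamond}_{\triangleright}$, and applying the one-variable criterion (Proposition~\ref{prop1}/\ref{prop1bis}) in each slot — again using flatness over the field to see that restriction to a single variable preserves the relevant injective/fibrant resolutions, cf. Lemma~\ref{lemflat} — yields cohomologicality in each variable, i.e. (4).

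The main obstacle, I expect, is (iii) and (iv): proving that $\RHom$-vanishing against \emph{all} weakly effaceable bifunctors specializes to $\RHom$-vanishing in a single variable. This requires identifying the left adjoint of the "plug in a fixed object'' functor $\Fun^{\ast}(\aaa^{\op}\times\bbb) \to \Fun(\bbb)$ and checking that it sends single-variable weakly effaceable functors into $\www^{\ast}_{\diamond}$, and — crucially — that this left adjoint is exact so that the adjunction isomorphism survives passing to derived functors. Over $\Z$ (part (iii), with the additivity in the first slot controlling which adjoint appears) and over a field (part (iv)) the relevant flatness is automatic, which is precisely why the hypotheses on $k$ are imposed; the content of the proof is bookkeeping to see that Lemma~\ref{lemflat}-type exactness is what makes the restriction compatible with $Ri$. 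Everything else is a transcription of the proofs of Propositions~\ref{prop1}, \ref{prop1bis}, and \ref{propwef}.
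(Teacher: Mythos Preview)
Your proposal is essentially correct and tracks the paper's own proof closely; parts (i), (iii), (iv) are the same argument (the paper just writes the adjunction in (iii)/(iv) out concretely as the four-term sequence $0 \to \bbb(-,B') \to \bbb(-,B) \to \bbb(-,B'') \to W \to 0$ tensored with $\Z\aaa(A,-)$, then applies $\RHom(-,K)$).

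Two small points are worth sharpening. In (ii), your phrase ``in each variable separately'' undersells the argument: the cohomology $H^{i+1}$ of the cone is weakly effaceable only as a \emph{bifunctor}, so an element $\xi \in H^{i+1}(B,A)$ is killed only in $H^{i+1}(B',A')$ after applying a deflation $B' \to B$ \emph{and} an inflation $A \to A'$ together. The paper handles this with a short diagram chase using both long exact sequences at once: the image of $\xi$ in $H^{i+1}(B,A')$ dies in $H^{i+1}(B',A')$, hence comes from $H^i(B'',A') = 0$ by the $\bbb$-LES; so $\xi$ already dies in $H^{i+1}(B,A')$, hence comes from $H^i(B,A'') = 0$ by the $\aaa$-LES. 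In (iii), the reason the left adjoint $M \mapsto \Z\aaa(A,-) \otimes_\Z M$ is exact is that $\Z\aaa(A,A')$ is a free abelian group, not that $\Z$ is hereditary; also Proposition~\ref{prop1bis} does not provide the direction (3)$\Rightarrow$(cohomological), so cite Proposition~\ref{prop1} there.
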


\begin{proof}
(i) This is proven like in Proposition \ref{prop1}. (ii)
Suppose that $K$ is bounded below and that (4) holds. To prove that (4) implies (1), as in the proof of Proposition \ref{prop1} it is sufficient to show that if $K$ is cohomological in both variables and has weakly effaceable cohomology objects $H^i$, then $H^i = 0$ implies $H^{i+1} = 0$. Consider $\xi \in H^{i+1}(B,A)$. Take conflations $A \lra A' \lra A''$ and $B'' \lra B' \lra B$ such that $H^{i+1}(B,A) \lra H^{i+1}(B',A')$ maps $\xi$ to zero. From the diagram 
$$\xymatrix{ && {H^i(B'',A')} \ar[d] \\ {H^i(B,A'')} \ar[r] & {H^{i+1}(B,A)} \ar[d] \ar[r] & {H^{i+1}(B,A')} \ar[d] \\
& {H^{i+1}(B',A)} \ar[r] & {H^{i+1}(B',A')}}$$
with exact middle row and last column we deduce that $\xi = 0$. Consequently $H^{i+1} = 0$. 

We now give the proof of (iii), the proof of (iv) is similar.
Let $k = \Z$ and suppose $K$ satisfies (3). Consider a conflation $B' \lra B \lra B''$ in $\bbb$ and the asociated exact sequence $0 \lra \bbb(-,B') \lra \bbb(-,B) \lra \bbb(-,B'') \lra W \lra 0$ with $W$ weakly effaceable in $\Mod(\bbb)$. For $A \in \aaa$, the sequence $0 \lra \bbb(-,B') \boxtimes \aaa(A,-) \lra \bbb(-,B) \boxtimes \aaa(A,-) \lra \bbb(-,B'') \boxtimes \aaa(A,-) \lra W \boxtimes \aaa(A,-) \lra 0$ remains exact in $\Mod(\aaa^{\op} \otimes \Z \bbb)$. An element $\sum_{i = 1}^n w_i \otimes f_i \in W(Y) \otimes \Z \aaa(A,X)$ can be effaced by composing finitely many $\bbb$-deflations, so $W \boxtimes \aaa(A,-)$ is weakly effaceable in the first variable in  $\Fun^{\triangleleft}(\aaa^{\op} \times \bbb)$. Finally, since $\bbb(-,B) \boxtimes \Z \aaa(A,-) = ((\Z \aaa)^{op} \otimes \bbb)(-, (B,A))$, we obtain the desired triangle by considering $\RHom(-, K)$.
\end{proof}

\begin{corollary}
Suppose $k$ is a field.
The functor $Ri: D^+(\Fun^{\diamond}_{\diamond}(\aaa^{\op} \times \bbb)) \lra D^+(\Fun^{\diamond}(\aaa^{\op} \times \bbb))$ induces an equivalence $D^+(\Fun^{\diamond}_{\diamond}(\aaa^{\op} \times \bbb)) \lra \widetilde{D^+}(\Fun^{\diamond}_{\diamond}(\aaa^{\op} \times \bbb))$ where $\widetilde{D^+}(\Fun^{\diamond}_{\diamond}(\aaa^{\op} \times \bbb)) \subseteq D^+(\Fun^{\diamond}(\aaa^{\op} \times \bbb))$ is the full subcategory of complexes that are cohomological in both variables.
\end{corollary}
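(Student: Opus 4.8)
The plan is to deduce the corollary directly from Proposition \ref{bimodprop} together with Proposition \ref{fff}, in exactly the same way the corollary following Proposition \ref{prop1bis} was deduced from that proposition. The key point is that when $k$ is a field and $\ast = \diamond$, parts (i)--(iv) of Proposition \ref{bimodprop} combine to give, for a \emph{bounded below} complex $K \in C(\Fun^{\diamond}(\aaa^{\op}\times\bbb))$, the equivalence of conditions (1), (2), (3) and (4): indeed (1)$\Leftrightarrow$(2) and (1)$\Rightarrow$(3) by (i), (3)$\Rightarrow$(4) by (iv) since $k$ is a field, and (4)$\Rightarrow$(1) by (ii) since $K$ is bounded below. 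So on bounded below complexes, ``$K$ is cohomological in both variables'' is equivalent to ``$K$ lies in the essential image of $Ri$''.

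First I would record that by Proposition \ref{fff}, applied to the localization $i: \Fun^{\diamond}_{\diamond}(\aaa^{\op}\times\bbb) \lra \Fun^{\diamond}(\aaa^{\op}\times\bbb)$ with exact left adjoint $a$ (which is a localization of Grothendieck categories, as established in Proposition \ref{propwef} via the compatibility of $\www^{\diamond}_{\triangleleft}$ and $\www^{\diamond}_{\triangleright}$), the functor $Ri: D(\Fun^{\diamond}_{\diamond}(\aaa^{\op}\times\bbb)) \lra D(\Fun^{\diamond}(\aaa^{\op}\times\bbb))$ is fully faithful; this restricts to a fully faithful functor on bounded below derived categories because $Ri$ has finite cohomological amplitude on objects (or simply because $Ri$ of a bounded below complex of injectives in $\Fun^{\diamond}_{\diamond}$ remains bounded below). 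Since $Ri$ is fully faithful, it induces an equivalence onto its essential image, and the only remaining task is to identify that essential image inside $D^+(\Fun^{\diamond}(\aaa^{\op}\times\bbb))$.

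Next I would argue that the essential image is precisely $\widetilde{D^+}(\Fun^{\diamond}_{\diamond}(\aaa^{\op}\times\bbb))$, the full subcategory of complexes cohomological in both variables. For one inclusion, if $L \in D^+(\Fun^{\diamond}_{\diamond}(\aaa^{\op}\times\bbb))$ then $Ri(L)$ satisfies condition (1) of Proposition \ref{bimodprop}, hence by part (i) condition (3), and then by part (iv) (using that $k$ is a field and $\ast = \diamond$) condition (4), i.e. $Ri(L)$ is cohomological in both variables; one should also note $Ri(L)$ is bounded below so it genuinely lands in $\widetilde{D^+}$. For the reverse inclusion, given $K \in \widetilde{D^+}(\Fun^{\diamond}_{\diamond}(\aaa^{\op}\times\bbb)) \subseteq D^+(\Fun^{\diamond}(\aaa^{\op}\times\bbb))$, $K$ is bounded below and cohomological in both variables, so condition (4) holds, and by part (ii) condition (1) holds, i.e. $K \cong Ri(L)$ for some $L \in D(\Fun^{\diamond}_{\diamond}(\aaa^{\op}\times\bbb))$; since $Ri$ reflects boundedness below (via $a Ri \cong 1$ and $a$ exact, $L \cong a(K)$ is bounded below), in fact $L \in D^+(\Fun^{\diamond}_{\diamond}(\aaa^{\op}\times\bbb))$.

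Since there is essentially no computation left, the only point requiring a little care — and the one I would flag as the ``main obstacle'' — is the bookkeeping about boundedness: one must check that $Ri$ preserves and reflects the property of being bounded below, so that the equivalence descends cleanly to the $D^+$ level rather than just to all of $D$. This follows because the left adjoint $a$ is exact (so $a$ preserves bounded below complexes, and $L \cong a Ri(L)$, $a(K) \cong$ the preimage), and because $Ri$ can be computed via bounded below injective resolutions in $\Fun^{\diamond}_{\diamond}$, which stay bounded below. With that in hand, $Ri: D^+(\Fun^{\diamond}_{\diamond}(\aaa^{\op}\times\bbb)) \lra \widetilde{D^+}(\Fun^{\diamond}_{\diamond}(\aaa^{\op}\times\bbb))$ is a fully faithful functor that is essentially surjective onto its target, hence an equivalence, which is the assertion of the corollary.
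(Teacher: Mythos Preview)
Your proposal is correct and follows exactly the approach the paper intends: the corollary is stated without proof in the paper, being an immediate consequence of Proposition~\ref{fff} (full faithfulness of $Ri$) together with the equivalence of conditions (1)--(4) in Proposition~\ref{bimodprop} for bounded below complexes when $k$ is a field and $\ast = \diamond$. Your careful handling of the boundedness bookkeeping (via $a Ri \cong 1$ with $a$ exact) is the only point the paper leaves entirely to the reader, and you have addressed it correctly.
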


For small exact categories $\aaa$ and $\bbb$, consider the inclusions
$$i: \Fun^{\triangleleft}_{\diamond}(\aaa^{\op} \times \bbb) \lra \Fun^{\triangleleft}(\aaa^{\op} \times \bbb)$$
and
$$i_{\bbb}: \Fun^{\triangleleft}_{\triangleleft}(\aaa^{\op} \times \bbb) \lra \Fun^{\triangleleft}(\aaa^{\op} \times \bbb)$$
which has an equivalent incarnation:
$$i_{\bbb}: \Fun(\aaa, \Lex(\bbb)) \lra \Fun(\aaa, \Mod(\bbb)).$$
Consider $F \in \Fun^{\triangleleft}_{\diamond}(\aaa^{\op} \times \bbb)$ along with its natural extension
$$F': \aaa \lra \Lex(\bbb) \lra \Lex(\Lex(\bbb): A \longmapsto \Lex(\bbb)(-, F(A)).$$
Let $F \lra E$ be an injective resolution in $\Fun(\aaa, \Lex(\bbb))$. We have $Ri_{\bbb}(F) = i_{\bbb}E = E$ and for every $A \in \aaa$, $F(-,A) \lra E(-,A)$ is an injective resolution in $\Lex(\bbb)$. 
According to Corollary \ref{maincor}, $E(-,A): \bbb^{\op} \lra C(k)$ induces a restriction of a derived functor of $\Lex(\bbb)(-, F(A))$, and $E(-,A)$ is itself a restriction of $\Hom_{\Lex(\bbb)}(-, E(-,A))$.

\begin{proposition}\label{key}
Let $\aaa$, $\bbb$ and $F$ be as above. Suppose $F: \aaa \lra \Lex(\bbb)$ is exact (i.e. maps conflations to short exact sequences). Then $Ri_{\bbb}(F)$ is cohomological in both variables and $Ri_{\bbb}(F) \cong Ri(F)$.
\end{proposition}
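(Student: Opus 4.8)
The plan is to show the two assertions separately, using the explicit injective resolution $F \to E$ in $\Fun(\aaa,\Lex(\bbb))$ recalled just before the statement. For the first assertion, I would argue that $Ri_{\bbb}(F) \cong E$ is cohomological in both variables. Cohomologicality in the \emph{first} variable: fix $A \in \aaa$ and recall from Corollary~\ref{maincor} that $E(-,A) \colon \bbb^{\op} \to C(k)$ is a restriction of a derived functor of $\Lex(\bbb)(-,F(A))$; in particular the complex $E(-,A) \in C(\Fun(\bbb))$ is cohomological in the sense of Definition~\ref{cohom}, since for a conflation $B' \to B \to B''$ the functor $\Lex(\bbb)(-,F(A))$ sits in the exact sequence $0 \to \bbb(-,B') \to \bbb(-,B) \to \bbb(-,B'') \to W \to 0$ with $W$ weakly effaceable, and applying $R\Hom(-,E(-,A))$ yields the required triangle; alternatively this is exactly Examples~\ref{excohom}(1) combined with Proposition~\ref{prop1}. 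So $E$ is cohomological in the first variable.

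The heart of the matter is cohomologicality in the \emph{second} variable, i.e.\ that for each $B \in \bbb$ the complex $E(B,-) \in C(\Fun(\aaa))$ is cohomological on $\aaa$; this is where the hypothesis that $F \colon \aaa \to \Lex(\bbb)$ is exact is used. I would fix a conflation $A \to A' \to A''$ in $\aaa$. Since $F$ is exact, $0 \to F(A) \to F(A') \to F(A'') \to 0$ is a short exact sequence in $\Lex(\bbb)$, hence in the Grothendieck category $\Lex(\bbb)$ it gives a triangle in $D(\Lex(\bbb))$; applying the exact functor $\mathsf{ev}_B \colon \Fun(\aaa,\Lex(\bbb)) \to \Lex(\bbb)$ to the injective resolution $F \to E$ and using that injective resolutions of the terms of a short exact sequence assemble (via the horseshoe lemma, or simply because $E$ is a complex of injectives fitting into a termwise-split exact sequence of such resolutions) into a triangle $E(B,-)|_A \to E(B,-)|_{A'} \to E(B,-)|_{A''} \to [1]$ in $D(\Fun(\aaa))$ — more precisely: choosing injective resolutions compatibly, the sequence $0 \to E_{F(A)} \to E_{F(A')} \to E_{F(A'')} \to 0$ of complexes of injectives in $\Lex(\bbb)$ is termwise split, and evaluating at $B$ and varying $A$ produces the triangle — we conclude $E(B,-)$ is cohomological in the variable of $\aaa$. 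Combining the two, $Ri_{\bbb}(F) = E$ is cohomological in both variables.

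For the second assertion $Ri_{\bbb}(F) \cong Ri(F)$, I would invoke Proposition~\ref{bimodprop}(ii): $E$ is bounded below (an injective resolution of an object), it lies in $\Fun^{\triangleleft}(\aaa^{\op}\times\bbb)$, and we have just shown it is cohomological in both variables, so property (4) holds; hence by (ii) there is $L \in D(\Fun^{\triangleleft}_{\diamond}(\aaa^{\op}\times\bbb))$ with $E \cong Ri(L)$, and by (i), $L \cong a(E)$. It remains to identify $a(E)$ with $F$ regarded as an object of $\Fun^{\triangleleft}_{\diamond}(\aaa^{\op}\times\bbb)$, i.e.\ to check $a(E) \cong F$ in $D(\Fun^{\triangleleft}_{\diamond}(\aaa^{\op}\times\bbb))$. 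For this I would compute the cohomology modules of $a(E)$: since $a$ is exact, $H^n(a(E)) = a(H^n(E))$, and by Corollary~\ref{maincor} the evaluations $H^n(E)(-,A) = H^n(E(-,A))$ are the restrictions to $\bbb^{\op}$ of the higher derived functors $R^n\Lex(\bbb)(-,F(A))$, which are effaceable for $n>0$ and equal $F(A)$ for $n=0$; thus $H^n(E)$ is weakly effaceable in the $\bbb$-variable for $n>0$ (and additive in the $\aaa$-variable), so $a$ kills it, while $H^0(E) = F$. Hence $a(E) \cong F$ and $Ri_{\bbb}(F) = E \cong Ri(F)$.

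\textbf{Expected main obstacle.} The delicate point is the horseshoe-style argument in the second paragraph: one must produce an injective resolution of the short exact sequence $0 \to F \to \cdot \to \cdot \to 0$ (viewed in $\Fun(\aaa,\Lex(\bbb))$, or after evaluation in $\Lex(\bbb)$) that is \emph{termwise split} and compatible with the chosen $E$, so that evaluating and letting $A$ vary in the conflation really yields a distinguished triangle of complexes in $C(\Fun(\aaa))$ rather than merely a long exact sequence of cohomology; the cleanest route is probably to resolve the conflation $A \to A' \to A''$ first — i.e.\ to work with the cohomological complex in $C(\Fun(\aaa))$ resolving $F$-images — and observe that applying the exact $F$ and then $E$ preserves the triangle, invoking Examples~\ref{excohom}(2). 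Care is also needed to keep $E$ additive in the first variable throughout, so that the output genuinely lies in the category $\Fun^{\triangleleft}(\aaa^{\op}\times\bbb)$ to which Proposition~\ref{bimodprop}(ii) applies.
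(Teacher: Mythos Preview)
Your overall strategy matches the paper's: compute $Ri_{\bbb}(F)$ via the injective resolution $E$ in $\Fun(\aaa,\Lex(\bbb))$, verify cohomologicality in each variable separately, then invoke Proposition~\ref{bimodprop}(ii) and identify $a(E)\cong F$ using that the higher cohomology objects of $E$ are weakly effaceable. Your treatment of the first variable and of the identification $a(E)\cong F$ is essentially identical to the paper's.

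The one point where the paper's argument is cleaner is precisely the step you correctly flag as delicate. Rather than attempting a horseshoe-compatible resolution, the paper simply observes that each $E(A)$ is a \emph{fibrant} object (a bounded-below complex of injectives) in $C(\Lex(\bbb))$, and that the vertical quasi-isomorphisms $F(A')\to E(A')$, $F(A)\to E(A)$, $F(A'')\to E(A'')$ transport the triangle coming from the short exact sequence $0\to F(A')\to F(A)\to F(A'')\to 0$ to a triangle $E(A')\to E(A)\to E(A'')\to$ in the homotopy category $\Ho(\mathsf{Fib}(C(\Lex(\bbb))))$. Now evaluation at $B$ is the functor $\Lex(\bbb)(\bbb(-,B),-)$; restricted to fibrant complexes it computes $\RHom$, hence preserves weak equivalences and triangles, and its output $E(B,A')\to E(B,A)\to E(B,A'')\to$ is a triangle in $D(k)$. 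This sidesteps the horseshoe issue entirely: no termwise-split short exact sequence of resolutions is required, because the triangle is produced at the homotopy level rather than the chain level. Your horseshoe approach would also work once patched, but the fibrant-object argument is both shorter and avoids the compatibility worry you identified.
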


\begin{proof}
Let $Ri_{\bbb}(F) = E$ as above. By the above discussion, for $A \in \aaa$, $E(-,A): \bbb^{\op} \lra C(k)$ is cohomological, $H^0E(-,A) = F(-,A)$ and the other cohomology object are weakly effaceable in $\Mod(\bbb)$. In particular, $H^0E = F$ and the higher cohomology objects are weakly effaceable, whence $a(E) = F$. Now fix $B \in \bbb$ and consider $E(B,-): \aaa \lra C(k)$. Let us show that this functor is cohomological. Let $A' \lra A \lra A''$ be a conflation in $\aaa$. By assumption, $0 \lra F(A') \lra F(A) \lra F(A'') \lra 0$ is a short exact sequence in $\Lex(\bbb)$. Now by naturality of $F \lra E$ we obtain a commutative diagram
$$\xymatrix{{F(A')} \ar[r] \ar[d] & {F(A)}
\ar[r] \ar[d] & {F(A'')} \ar[d] \\ {E(A')} \ar[r] & {E(A)} \ar[r] & {E(A'')}}$$
 in $C(\Lex(\bbb))$ in which the vertical arrows are quasi-isomorphisms. As a consequence, the lower row can be completed into a triangle in $\mathsf{Fib}(C(\Lex(\bbb)))$. The functor $\Lex(\bbb)(\bbb(-,B), -): \mathsf{Fib}(C(\Lex(\bbb))) \lra C(k)$ maps this triangle to a triangle in $D(k)$ as desired. Finally by Proposition \ref{bimodprop}, we conclude that $E \cong Ri(F)$.
\end{proof}

In the remainder of this subsection, let $k$ be a field. For small exact $k$-linear categories $\aaa$ and $\bbb$, 
consider the inclusions
$$i: \Fun^{\diamond}_{\diamond}(\aaa^{\op} \times \bbb) \lra \Fun^{\diamond}(\aaa^{\op} \times \bbb),$$
$$i_{\aaa}: \Fun^{\diamond}_{\triangleright}(\aaa^{\op} \times \bbb) \lra \Fun^{\diamond}(\aaa^{\op} \times \bbb)$$
which has an equivalent incarnation:
$$i_{\aaa}: \Mod(\bbb^{\op}, \Lex(\aaa^{\op})) \lra \Mod(\bbb^{\op}, \Mod(\aaa^{\op}))$$
and 
$$i_{\bbb}: \Fun^{\diamond}_{\triangleleft}(\aaa^{\op} \times \bbb) \lra \Fun^{\diamond}(\aaa^{\op} \times \bbb)$$
which has an equivalent incarnation:
$$i_{\bbb}: \Mod(\aaa, \Lex(\bbb)) \lra \Mod(\aaa, \Mod(\bbb)).$$
Consider $F \in \Fun^{\diamond}_{\diamond}(\aaa^{\op} \times \bbb)$.

\begin{proposition}\label{1naar2}
If $F: \bbb^{\op} \lra \Lex(\aaa^{\op})$ is exact, then $Ri_{\aaa}(F)$ is cohomological in both variables and $Ri_{\aaa}(F) \cong Ri(F)$. If $F: \aaa \lra \Lex(\bbb)$ is exact, then $Ri_{\bbb}(F)$ is cohomological in both variables and $Ri_{\bbb}(F) \cong Ri(F)$.
\end{proposition}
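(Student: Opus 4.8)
The plan is to transport the proof of Proposition \ref{key} to the present fully additive (``$\diamond$'') setting. Two ingredients have to be replaced. Corollary \ref{maincor}, which was designed for $\Fun(\aaa,-)$, is replaced by Proposition \ref{mainint2} for $\Mod(\AAA,-)$; it is precisely here that the hypothesis ``$k$ a field'' is used, since Proposition \ref{mainint2} asks $\AAA$ to have $k$-flat homsets, which over a field is automatic. And Proposition \ref{prop1} is replaced by its two-variable analogue Proposition \ref{bimodprop}. Moreover the two assertions of the proposition are exchanged by the symmetry interchanging $\aaa$ with $\bbb^{\op}$ and $\bbb$ with $\aaa^{\op}$ (both $\aaa^{\op}$ and $\bbb^{\op}$ being again exact $k$-linear categories, cf. Examples \ref{examples}), under which $i_{\aaa}$ and $i_{\bbb}$ trade places while $i: \Fun^{\diamond}_{\diamond} \to \Fun^{\diamond}$ is preserved; so it suffices to prove the second one, i.e. that if $F: \aaa \to \Lex(\bbb)$ is exact then $Ri_{\bbb}(F) \cong Ri(F)$ and $Ri_{\bbb}(F)$ is cohomological in both variables, where $i_{\bbb}: \Mod(\aaa, \Lex(\bbb)) \to \Mod(\aaa, \Mod(\bbb))$.

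First I would pick an injective resolution $F \lra E$ in $\Mod(\aaa, \Lex(\bbb))$, so that $Ri_{\bbb}(F) = i_{\bbb}(E) = E$. Since $k$ is a field, $\aaa$ has $k$-flat homsets, so Lemma \ref{lemflat} ensures that $F(A) \lra E(A)$ is an injective resolution in $\Lex(\bbb)$ for every $A \in \aaa$. Forming the natural extension $F': \aaa \to \Lex(\Lex(\bbb))$, $A \longmapsto \Lex(\bbb)(-, F(A))$, Proposition \ref{mainint2} (with $\AAA = \aaa$ and $\ccc = \bbb$) then shows that each $E(-,A) \in C(\Mod(\bbb))$ represents a restriction to $\bbb^{\op}$ of a derived functor of $\Lex(\bbb)(-, F(A))$; since $\bbb \to \Lex(\bbb) \to D(\Lex(\bbb))$ is cohomological (Examples \ref{excohom}) and $\RHom_{\Lex(\bbb)}(-, F(A))$ is triangulated, $E(-,A)$ is cohomological, with $H^{0} E(-,A) = F(-,A)$ and the remaining cohomology objects (computed in $\Mod(\bbb)$) weakly effaceable in $\Mod(\bbb)$. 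Hence $E$ is cohomological in the first variable, $H^{0} E = F$, the higher cohomology objects of $E$ are weakly effaceable as bifunctors, and since $a$ is exact and $F$ is already a sheaf in both variables, $a(E) = F$.

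The remaining step, and the only place where exactness of $F$ is used, is to show that $E$ is also cohomological in the second variable, which I would do just as in Proposition \ref{key}: a conflation $A' \lra A \lra A''$ in $\aaa$ yields a short exact sequence $0 \lra F(A') \lra F(A) \lra F(A'') \lra 0$ in $\Lex(\bbb)$, and naturality of $F \lra E$ places this inside a commutative diagram in $C(\Lex(\bbb))$ with rows $F(A') \lra F(A) \lra F(A'')$ and $E(A') \lra E(A) \lra E(A'')$ and vertical quasi-isomorphisms, so the lower row completes to a triangle in $\mathsf{Fib}(C(\Lex(\bbb)))$; applying $\Lex(\bbb)(\bbb(-,B), -): \mathsf{Fib}(C(\Lex(\bbb))) \lra C(k)$, which on objects sends $E(A)$ to $E(A)(B) = E(B,A)$, yields a triangle in $D(k)$, so $E(B,-)$ is cohomological for every $B \in \bbb$. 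Thus $E$ is cohomological in both variables and bounded below, so Proposition \ref{bimodprop}(ii) gives $E \cong Ri(L)$ for some $L$, whence Proposition \ref{bimodprop}(i) gives $E \cong Ri(a(E)) = Ri(F)$. I expect the step needing most care to be the first-variable part: the failure of an injective resolution of bimodules to restrict to injective resolutions in a single variable is exactly what forces the field hypothesis, and this is dealt with via Lemma \ref{lemflat} and Proposition \ref{mainint2}; the model-categorical realization of the triangle in the last step is a minor point, handled exactly as in Proposition \ref{key}.
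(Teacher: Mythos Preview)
Your proposal is correct and is precisely the detailed unpacking of the paper's one-line proof ``Similar to the proof of Proposition \ref{key}.'' You have identified exactly the two substitutions that make the argument go through in the $\diamond$-setting: Corollary \ref{maincor} is replaced by Proposition \ref{mainint2} (with the field hypothesis supplying the $k$-flatness of $\aaa$'s homsets needed there), and Proposition \ref{bimodprop} plays the role that Proposition \ref{prop1} played implicitly in Proposition \ref{key}; the symmetry reduction to a single assertion is also correct.
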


\begin{proof}
Similar to the proof of Proposition \ref{key}.
\end{proof}

\section{Cohomology of exact categories}\label{sec.3}

In this section we discuss a number of different cohomology expressions for exact categories and more generally for linear sites. We start with expressions ``of Hochschild type''. Our main results are over a field. We relate the cohomology of a Grothendieck category $\ddd$ of \cite{lowenvandenbergh2} to $\Ext$'s in the large additive functor category $\Add(\ddd, \ddd)$ (Theorem \ref{maingroth}). For a small exact category $\ccc$, the cohomology of \cite{keller6} corresponds to the cohomology of the Grothendieck category $\Lex(\ccc)$, similar to the situation for abelian categories in \cite{lowenvandenbergh2} (Theorem \ref{kelcomp}). We show that this cohomology can also be expressed as $\Ext$'s in the category $\Fun^{\diamond}_{\diamond}(\ccc^{\op} \times \ccc)$ of bimodules that are sheaves (in other words, left exact) in both variables (Theorem \ref{bilex}). This expression originated from \cite{kaledin1}. For module categories, some of these Hochschild expressions bear resemblance to an incarnation of Mac Lane cohomology discovered in \cite{jibladzepirashvili}. Inspired by this, we define Mac Lane cohomology for linear sites (\S \ref{MLadd}). Finally, we show that for an exact category $\ccc$ this cohomology can also be expressed as $\Ext$'s in the category $\Fun^{\triangleleft}_{\diamond}(\ccc^{\op} \times \ccc)$ of bifunctors that are additive in the first variable and sheaves in both variables (Theorem \ref{MLex}).

\subsection{Hochschild-Shukla cohomology of dg categories}
Let $k$ be a commutative ring. Let $\AAA$ be a $k$-linear dg category and $M$ an $\AAA$-bimodule. Recall that the Hochschild complex $\CC_{\mathrm{hoch}}(\AAA, M)$ of $\AAA$ with values in $M$ is the product total complex of the double complex with $p$-th column
$$\prod_{A_0, \dots, A_p} \Hom_k(\AAA(A_{p-1}, A_p) \otimes_k \dots \otimes_k \AAA(A_0, A_1), M(A_0, A_p))$$
and the usual Hochschild differential. The Hochschild complex of $\AAA$ is $\CC_{\mathrm{hoch}}(\AAA) = \CC_{\mathrm{hoch}}(\AAA, \AAA)$. If $\AAA$ is $k$-cofibrant, then
$$\CC_{\mathrm{hoch}}(\AAA, M) \cong \RHom_{\AAA^{\op} \otimes \AAA}(\AAA, M)$$
in $D(k)$.

For $\AAA$ arbitrary, the Shukla complex of $\AAA$ is by definition the Hochschild complex of a $k$-cofibrant dg resolution $\bar{\AAA} \lra \AAA$, i.e.
$$\CC_{\mathrm{sh}}(\AAA, M) = \CC_{\mathrm{hoch}}(\bar{\AAA}, M).$$

\subsection{Hochschild-Shukla cohomology of Grothendieck categories}\label{subgroth}
In \cite{lowenvandenbergh2}, Hochschild-Shukla cohomology was defined for abelian categories. For a Grothendieck category, a convenient definition is
$$\CC_{\mathrm{gro}}(\ddd) = \CC_{\mathrm{sh}}(\inj(\ddd))$$
where $\inj(\ddd)$ is the linear category of injectives in $\ddd$. 
Now let $(\UUU, \ttt)$ be an additive site with additive sheaf category $\Sh(\UUU)$ and canonical map $u: \UUU \lra \Sh(\UUU)$. For every $U \in \UUU$, choose an injective resolution $u(U) \lra E_U$ and let $\UUU_{\dg} \subseteq C(\Sh(\UUU))$ be the full dg subcategory consisting of the $E_U$. It is proven in \cite{lowenvandenbergh2} that
$$\CC_{\mathrm{gro}}(\Sh(\UUU)) \cong \CC_{\sh}(\UUU_{\dg}).$$
We finally recall the following more technical result \cite[Lemma 5.4.2]{lowenvandenbergh2}, which will be crucial for us. Let $r: \bar{\UUU} \lra \UUU$ be a $k$-cofibrant resolution and take a fibrant replacement $ur \lra E$ in the model category $\mathsf{DgFun}(\bar{\UUU}, C(\Sh(\UUU)))$ of \cite[Proposition 5.1]{lowenvandenbergh2}. Then $E$ naturally defines a $\bar{\UUU}$-$\bar{\UUU}$-bimodule by $E(U,V) = E(V)(U) = \Hom_{\Sh(\UUU)}(ur(U), E(V))$ and we have
\begin{equation}\label{eqkey}
\CC_{\gro}(\Sh(\UUU)) \cong \CC_{\sh}(\bar{\UUU}, E).
\end{equation}

In the remainder of this subsection, let $k$ be a field. Consider the localization
$$i \circ -: \Mod(\UUU, \Sh(\UUU)) \lra \Mod(\UUU, \Mod(\UUU)),$$
$$a \circ -: \Mod(\UUU, \Mod(\UUU)) \lra \Mod(\UUU, \Sh(\UUU))$$
induced by $i: \Sh(\UUU) \lra \Mod(\UUU)$, $a: \Mod(\UUU) \lra \Sh(\UUU)$.

\begin{proposition}\label{newgroth}
We have:
$$\CC_{\gro}(\Sh(\UUU))  \cong \CC_{\hoch}(\UUU, R(i\circ -)(u))  \cong \RHom_{\Mod(\UUU, \Sh(\UUU))}(u,u).$$
Furthermore, for every natural transformation $u \lra F$ in $C(\Mod(\UUU, \Sh(\UUU)))$ for which every $u(U) \lra F(U)$ is an injective resolution, we have:
$$\CC_{\gro}(\Sh(\UUU)) \cong \CC_{\hoch}(\UUU, F).$$
\end{proposition}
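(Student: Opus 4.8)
The plan is to begin from the identification \eqref{eqkey} of \cite{lowenvandenbergh2}, which already expresses $\CC_{\gro}(\Sh(\UUU))$ as $\CC_{\sh}(\bar\UUU, E) = \CC_{\hoch}(\bar\UUU, E)$, where $r\colon \bar\UUU \lra \UUU$ is a $k$-cofibrant resolution, $ur \lra E$ a fibrant replacement in $\mathsf{DgFun}(\bar\UUU, C(\Sh(\UUU)))$, and $E$ is regarded as a $\bar\UUU$-bimodule via $E(U,V) = \Hom_{\Sh(\UUU)}(ur(U), E(V)) = (iE(V))(r(U))$. Everything else consists of recognising this bimodule as a restriction of $R(i\circ -)(u)$ and then transporting the statement across the invariance properties of the Hochschild complex, which are available because $k$ is a field: concretely, for a $k$-linear category with flat hom-sets the bar complex is a $K$-projective resolution of the diagonal bimodule, so $\CC_{\hoch}(\UUU, -) \simeq \RHom_{\Mod(\UUU^{\op}\otimes\UUU)}(\UUU, -)$, and likewise $\CC_{\hoch}(\bar\UUU, -) \simeq \RHom_{\Mod(\bar\UUU^{\op}\otimes\bar\UUU)}(\bar\UUU, -)$ by cofibrancy of $\bar\UUU$; I would isolate this as a preliminary lemma. (Here $\CC_{\hoch}(\UUU, F)$ for $F\in C(\Mod(\UUU,\Sh(\UUU)))$ means the Hochschild complex of the $\UUU$-bimodule $(U,V)\mapsto \Hom_{\Sh(\UUU)}(u(U),F(V))$.)

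First I would identify $E$ with $R(i\circ-)(u)$. Since $ur\lra E$ is a fibrant replacement, each $E(V)$ is fibrant (fibrant objects in the model structure of \cite[Proposition 5.1]{lowenvandenbergh2} are pointwise fibrant), so Lemma \ref{lemEF} gives $R(i\circ-)(E) \cong i\circ E$, and as $ur\lra E$ is a weak equivalence, $R(i\circ-)(ur) \cong i\circ E$. If $u \lra I$ is an injective resolution in $\Mod(\UUU, \Sh(\UUU))$, then $R(i\circ-)(u) = i\circ I$; by Lemma \ref{lemflat} (using that $k$ is a field) the evaluations $\mathrm{ev}_U\colon \Mod(\UUU, \Sh(\UUU)) \lra \Sh(\UUU)$ preserve injectives, so $I\circ r$ is again pointwise fibrant, hence by Lemma \ref{lemEF} an $(i\circ-)$-acyclic resolution of $ur$, which shows $R(i\circ-)(ur) \cong i\circ(I\circ r)$, i.e. $R(i\circ-)$ commutes with restriction along $r$. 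Unwinding $E(U,V) = (iE(V))(r(U))$ this identifies the $\bar\UUU$-bimodule $E$, up to quasi-isomorphism, with the restriction along $r$ in both variables of the $\UUU$-bimodule $R(i\circ-)(u)$, so that
\[
\CC_{\gro}(\Sh(\UUU)) \cong \CC_{\hoch}(\bar\UUU, E) \cong \CC_{\hoch}(\UUU, R(i\circ-)(u));
\]
the last isomorphism is the restriction-invariance of the Hochschild complex along $r$, which follows from the preliminary lemma together with the fact that $r\otimes r$ induces an equivalence of derived bimodule categories carrying the diagonal bimodule of $\bar\UUU$ to the restriction of the diagonal bimodule of $\UUU$.

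Second I would identify $\CC_{\hoch}(\UUU, R(i\circ-)(u))$ with $\RHom_{\Mod(\UUU,\Sh(\UUU))}(u,u)$. By the preliminary lemma this amounts to comparing $\RHom_{\Mod(\UUU^{\op}\otimes\UUU)}(\UUU, R(i\circ-)(u))$ with $\RHom_{\Mod(\UUU,\Sh(\UUU))}(u,u)$. Consider the map of $\UUU$-bimodules $\UUU \lra i\circ u \lra i\circ I = R(i\circ-)(u)$, the first arrow being the unit of $a\circ - \dashv i\circ -$ taken pointwise and the second coming from $u\lra I$. Since $a\circ -$ is exact and $a\circ i \cong \mathrm{id}$, applying $a\circ -$ turns the first arrow into an isomorphism and the second into a quasi-isomorphism, so the cone $C$ of $\UUU \lra R(i\circ-)(u)$ satisfies $(a\circ-)(C) \simeq 0$. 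The derived adjunction $a\circ - \dashv R(i\circ-)$ then yields
\[
\RHom_{\Mod(\UUU^{\op}\otimes\UUU)}(C, R(i\circ-)(u)) \cong \RHom_{\Mod(\UUU,\Sh(\UUU))}((a\circ-)(C), u) = 0,
\]
so $\UUU \lra R(i\circ-)(u)$ induces an isomorphism $\RHom(R(i\circ-)(u),R(i\circ-)(u)) \xrightarrow{\ \sim\ } \RHom(\UUU, R(i\circ-)(u))$; and as $a\circ-$ is an exact left adjoint of $i\circ-$ between Grothendieck categories, Proposition \ref{fff} gives that $R(i\circ-)$ is fully faithful, so $\RHom(R(i\circ-)(u),R(i\circ-)(u)) \cong \RHom_{\Mod(\UUU,\Sh(\UUU))}(u,u)$. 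This completes the chain of isomorphisms. For the final assertion, given $u\lra F$ with every $u(U)\lra F(U)$ an injective resolution, pulling back along $r$ keeps every $ur(U) \lra F(r(U))$ an injective resolution in $\Sh(\UUU)$, so $F\circ r$ is pointwise fibrant and $ur\lra F\circ r$ a weak equivalence; Lemma \ref{lemEF} gives $R(i\circ-)(ur) \cong i\circ(F\circ r)$, which combined with $R(i\circ-)(ur) \cong i\circ E$ shows that the $\bar\UUU$-bimodules underlying $E$ and $F\circ r$ are quasi-isomorphic, whence $\CC_{\gro}(\Sh(\UUU)) \cong \CC_{\hoch}(\bar\UUU, E) \cong \CC_{\hoch}(\bar\UUU, F\circ r) \cong \CC_{\hoch}(\UUU, F)$.

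I expect the main obstacle to be organisational rather than conceptual: one must keep the several incarnations of the same object apart (a dg functor, its class in $\mathrm{Ho}(\mathsf{DgFun})$, and a complex of bimodules), and be scrupulous about why the explicit Hochschild complex — not a priori a homotopy invariant — is insensitive both to quasi-isomorphism of the coefficient bimodule and to restriction along $r$. That insensitivity is precisely where the hypothesis that $k$ is a field is used, and isolating it as the preliminary lemma mentioned above is, in my view, the one step that genuinely requires care; the rest reduces to the already-available Lemma \ref{lemEF}, the derived adjunction $a\circ- \dashv R(i\circ-)$, and Proposition \ref{fff}.
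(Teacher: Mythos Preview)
Your proof is correct, but it takes a longer path than the paper's. The paper exploits the hypothesis that $k$ is a field at the very outset: since every $k$-module is flat, $\UUU$ is already $k$-cofibrant, so one may take $\bar\UUU = \UUU$ and $r = \mathrm{id}$. This eliminates all the restriction-along-$r$ bookkeeping you carry through the argument (your ``preliminary lemma'' on invariance under $r$, the comparison of $R(i\circ-)(ur)$ with the restriction of $R(i\circ-)(u)$, etc.). With $\bar\UUU = \UUU$, one simply takes an injective resolution $u \lra E$ in $\Mod(\UUU,\Sh(\UUU))$ as the fibrant replacement, and \eqref{eqkey} gives $\CC_{\gro}(\Sh(\UUU)) \cong \CC_{\hoch}(\UUU, iE)$ directly, where $iE = R(i\circ-)(u)$ by construction.

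For the second isomorphism, the paper applies the derived adjunction $(a\circ-) \dashv R(i\circ-)$ directly: viewing the identity bimodule $I$ as an object of $\Mod(\UUU,\Mod(\UUU))$ via the Yoneda embedding, one has $(a\circ-)(I) = u$, so
\[
\RHom_{\UUU^{\op}\otimes\UUU}(I, R(i\circ-)(u)) \cong \RHom_{\Mod(\UUU,\Sh(\UUU))}(u,u)
\]
in one step. Your route through the cone of $\UUU \lra R(i\circ-)(u)$ and full faithfulness of $R(i\circ-)$ (Proposition~\ref{fff}) reaches the same destination but is a genuine detour. What your approach buys is that it would survive with fewer modifications if one wanted to weaken the flatness hypothesis on $k$; but in the present context the paper's shortcut is cleaner.
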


\begin{proof}
Since we are over a field, we can take $\bar{\UUU} = \UUU$ and $u \lra E$ an injective resolution of $u$ in $\Mod(\UUU, \Sh(\UUU))$.  By construction $R(i \circ -)(u) = iE$ and hence
$$\begin{aligned}
\CC_{\gro}(\Sh(\UUU)) & \cong \CC_{\hoch}(\UUU, iE)\\
& \cong \RHom_{\UUU^{\op} \otimes \UUU}(I, R(i \circ -)(u))\\
& \cong \RHom_{\Mod(\UUU, \Sh(\UUU))}(u,u).
\end{aligned}$$
Furthermore, by Lemma \ref{lemEF} we have $R(i \circ -)(u) \cong iF$.
\end{proof}

\subsection{Hochschild-Shukla cohomology of exact categories}
Let $k$ be a commutative ring. Let $\ccc$ be a small exact category. In this section we discuss some definitions of Hochschild-Shukla cohomology of $\ccc$.

The first definition is due to Keller \cite{keller6}. Let $C_{\mathrm{dg}}^b(\ccc)$ be the dg category of bounded complexes of $\ccc$-objects, and $Ac_{\mathrm{dg}}^b(\ccc)$ its full dg subcategory of acyclic complexes. Then for the dg quotient $D^b_{\mathrm{dg}}(\ccc) = C_{\mathrm{dg}}^b(\ccc)/Ac_{\mathrm{dg}}^b(\ccc)$:
$$\CC_{\mathrm{ex}}(\ccc) = \CC_{\mathrm{sh}}(D^b_{\mathrm{dg}}(\ccc)).$$
In \cite{lowenvandenbergh2}, the authors defined Hochschild-Shukla cohomology of abelian categories. This definition has the following generalization to exact categories:
$$\CC_{\mathrm{ex'}}(\ccc) = \CC_{\gro}(\Lex(\ccc)) = \CC_{\mathrm{sh}}(\inj(\Lex(\ccc)).$$

Using Proposition \ref{propffd} it is easily seen (see \cite[Lemma 6.3]{lowenvandenbergh2}) that a concrete model for $D^b_{\mathrm{dg}}(\ccc)$ is given by the full subcategory of $C_{\mathrm{dg}}(\Lex(\ccc))$ of bounded below complexes of injectives with bounded cohomology in $\ccc$. We also introduce the full subcategory $\ccc_{\mathrm{dg}} \subseteq C_{\mathrm{dg}}(\Lex(\ccc))$ of positively graded complexes of injectives whose only cohomology is in degree zero and in $\ccc$.

The following is proven in exactly the same way as \cite[Theorem 6.2]{lowenvandenbergh2}:

\begin{theorem}\label{kelcomp}
There are quasi-isomorphisms
$$\CC_{\mathrm{ex'}}(\ccc) \cong \CC_{\mathrm{sh}}(\ccc_{\mathrm{dg}}) \cong \CC_{\mathrm{ex}}(\ccc).$$
\end{theorem}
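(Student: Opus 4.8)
The plan is to transcribe the proof of \cite[Theorem 6.2]{lowenvandenbergh2}, replacing the fully faithfulness used there for abelian categories by Proposition \ref{propffd} for the exact category $\ccc$. Two inputs carry the argument. The first is the invariance of Hochschild--Shukla cohomology of (small) dg categories: $\CC_{\mathrm{sh}}(-)$ is unchanged under quasi-equivalence and under passing to the pretriangulated envelope --- equivalently, $\CC_{\mathrm{sh}}(\AAA)$ is a derived invariant, depending only on the dg-enhanced derived category $D(\AAA)$ (Keller; this is exactly the invariance already used in \cite[\S 6]{lowenvandenbergh2}). The second is the identity $\CC_{\mathrm{gro}}(\Sh(\UUU)) \cong \CC_{\mathrm{sh}}(\UUU_{\mathrm{dg}})$ recalled in \S\ref{subgroth}, valid for an arbitrary additive site $(\UUU, \ttt)$, where $\UUU_{\mathrm{dg}} \subseteq C_{\mathrm{dg}}(\Sh(\UUU))$ is the full dg subcategory on a chosen injective resolution $u(U) \to E_U$ of each representable.

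For the first quasi-isomorphism I would argue as follows. Since $\Lex(\ccc) = \Sh^{\mathrm{add}}(\ccc)$ for the single-deflation additive topology, $\CC_{\mathrm{ex'}}(\ccc) = \CC_{\mathrm{gro}}(\Lex(\ccc)) = \CC_{\mathrm{gro}}(\Sh(\ccc))$, and the recalled identity with $\UUU = \ccc$ gives $\CC_{\mathrm{ex'}}(\ccc) \cong \CC_{\mathrm{sh}}(\ccc_{\mathrm{dg}}^{0})$, where $\ccc_{\mathrm{dg}}^{0} \subseteq C_{\mathrm{dg}}(\Lex(\ccc))$ is the full dg subcategory on a chosen injective resolution $\ccc(-,C) \to E_C$ of each $C \in \ccc$. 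Now $\ccc_{\mathrm{dg}}^{0}$ is a full dg subcategory of $\ccc_{\mathrm{dg}}$, and conversely every object of $\ccc_{\mathrm{dg}}$ is a positively graded complex of injectives of $\Lex(\ccc)$ whose only cohomology is $H^0 \in \ccc$, i.e. an injective resolution of some object of $\ccc$, hence homotopy equivalent to the corresponding $E_C$. Thus $\ccc_{\mathrm{dg}}^{0} \hookrightarrow \ccc_{\mathrm{dg}}$ is a quasi-equivalence, and $\CC_{\mathrm{sh}}(\ccc_{\mathrm{dg}}^{0}) \cong \CC_{\mathrm{sh}}(\ccc_{\mathrm{dg}})$.

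For the second quasi-isomorphism, Proposition \ref{propffd} tells us that $D^-(\ccc) \to D^-(\Lex(\ccc))$ is fully faithful, so, exactly as in \cite[Lemma 6.3]{lowenvandenbergh2}, $D^b_{\mathrm{dg}}(\ccc)$ is modeled by the full dg subcategory $\mmm \subseteq C_{\mathrm{dg}}(\Lex(\ccc))$ of bounded below complexes of injectives whose cohomology is bounded and lies in $\ccc$. This $\mmm$ contains $\ccc_{\mathrm{dg}}$ as a full dg subcategory, and on $H^0$ the inclusion identifies $H^0(\ccc_{\mathrm{dg}})$ with the canonical copy of $\ccc$ inside $H^0(\mmm) = D^b(\ccc)$. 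Since every object of $D^b(\ccc)$ is a finite iterated cone of shifts of objects of $\ccc$ (via the canonical truncation triangles of a bounded complex), $\mmm$ is a pretriangulated full dg subcategory generated by $\ccc_{\mathrm{dg}}$, and the invariance above yields $\CC_{\mathrm{sh}}(\ccc_{\mathrm{dg}}) \cong \CC_{\mathrm{sh}}(\mmm) \cong \CC_{\mathrm{sh}}(D^b_{\mathrm{dg}}(\ccc)) = \CC_{\mathrm{ex}}(\ccc)$.

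The main obstacle lies entirely in the first input rather than in the bookkeeping: one must make sure that Keller's derived-Morita-invariance of Hochschild--Shukla cohomology applies with $\ccc$ merely exact, and in particular that idempotent-completeness plays no role --- $\ccc$ need not be idempotent complete, but $D^b(\ccc)$ is still generated as a triangulated category by $\ccc$ (no summands needed, the truncation triangles suffice), and $\CC_{\mathrm{sh}}(-)$ does not distinguish a dg category from its triangulated, let alone thick, envelope; over a general commutative $k$ this is where the passage to $k$-cofibrant (Shukla) models is essential. Two minor points also need checking: that the identity $\CC_{\mathrm{gro}}(\Sh(\UUU)) \cong \CC_{\mathrm{sh}}(\UUU_{\mathrm{dg}})$ is genuinely stated for general additive sites (so that it applies to $\ccc$ with its single-deflation additive topology without change), and that the possible largeness of $\inj(\Lex(\ccc))$ is harmless because the whole argument takes place inside the small dg category $\ccc_{\mathrm{dg}}$. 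Granting these, the proof is a line-by-line copy of \cite[Theorem 6.2]{lowenvandenbergh2}.
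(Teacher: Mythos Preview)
Your proposal is correct and takes essentially the same approach as the paper, which simply states that the theorem ``is proven in exactly the same way as \cite[Theorem 6.2]{lowenvandenbergh2}''; your write-up is a faithful expansion of that transcription, using Proposition~\ref{propffd} in place of the abelian fully-faithfulness and the identity $\CC_{\mathrm{gro}}(\Sh(\UUU)) \cong \CC_{\mathrm{sh}}(\UUU_{\mathrm{dg}})$ recalled in \S\ref{subgroth}. Your cautionary remarks about idempotent-completeness and largeness are prudent but not obstacles: Keller's invariance applies once $\ccc_{\mathrm{dg}}$ generates $\mmm$ as a triangulated (not thick) dg subcategory, which your truncation-triangle argument already establishes.
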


Shukla cohomology of an exact category interpolates between Shukla cohomology of a $k$-linear category and Shukla cohomology of an abelian category. Of course, an arbitrary $k$-linear category is not exact since it is not additive, but this can easily be remedied by adding finite biproducts.

\begin{proposition}\label{linex}
Let $\AAA$ be a $k$-linear category and $\mathsf{free}(\AAA)$ the exact category of finitely generated free $\AAA$-modules with split exact conflations. We have:
$$\CC_{\mathrm{ex}}(\mathsf{free}(\AAA)) \cong \CC_{\mathrm{sh}}(\AAA).$$
\end{proposition}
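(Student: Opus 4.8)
The plan is to push $\CC_{\mathrm{ex}}(\mathsf{free}(\AAA))$ through the comparison results of this section until it meets the relation \eqref{eqkey}, which expresses the cohomology of a Grothendieck category via the Shukla complex of a small dg model. \emph{Step 1 (the Grothendieck category).} The exact structure on $\mathsf{free}(\AAA)$ being the split one, every deflation $\lambda$ is a split epimorphism, so the sieve $\langle\lambda\rangle$ it generates is the maximal one; hence the deflation topology on $\mathsf{free}(\AAA)$ is trivial and $\Lex(\mathsf{free}(\AAA)) = \Mod(\mathsf{free}(\AAA))$ (cf.\ Remark~\ref{remsplit}). Since $\mathsf{free}(\AAA)$ is the finite-biproduct completion of $\AAA$, restriction along $\AAA \hookrightarrow \mathsf{free}(\AAA)$ is an equivalence $\Mod(\mathsf{free}(\AAA)) \cong \Mod(\AAA)$, exactly as in Examples~\ref{examples}(1). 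Therefore, by Theorem~\ref{kelcomp} and the definition of $\CC_{\mathrm{ex'}}$,
$$\CC_{\mathrm{ex}}(\mathsf{free}(\AAA)) \cong \CC_{\mathrm{ex'}}(\mathsf{free}(\AAA)) = \CC_{\gro}(\Lex(\mathsf{free}(\AAA))) \cong \CC_{\gro}(\Mod(\AAA)).$$

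\emph{Step 2 (apply \eqref{eqkey}).} View $\Mod(\AAA)$ as the additive sheaf category $\Sh(\AAA)$ for the trivial topology on $\AAA$, with $u \colon \AAA \to \Mod(\AAA)$ the Yoneda embedding $A \mapsto \AAA(-,A)$. Fix a $k$-cofibrant resolution $r \colon \bar{\AAA} \to \AAA$ and a fibrant replacement $ur \to E$ in $\mathsf{DgFun}(\bar{\AAA}, C(\Mod(\AAA)))$. Then \eqref{eqkey} gives $\CC_{\gro}(\Mod(\AAA)) \cong \CC_{\sh}(\bar{\AAA}, E)$ for the $\bar{\AAA}$-bimodule $E(U,V) = \Hom_{\Mod(\AAA)}(ur(U), E(V))$.

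\emph{Step 3 (identify the bimodule).} I expect this to be the one step requiring genuine work. For fixed $U,V$ the complex $E(U,V)$ computes $\RHom_{\Mod(\AAA)}(ur(U), ur(V))$; since $ur(U) = \AAA(-,r(U))$ is representable and hence projective in $\Mod(\AAA)$, this $\RHom$ is concentrated in degree $0$ and equals $\AAA(r(U), r(V))$. The bimodule morphism $\bar{\AAA} \to E$ induced by $ur \to E$ restricts on each $(U,V)$ to the map $\bar{\AAA}(U,V) \to E(U,V)$ realizing the morphism $\bar{\AAA}(U,V) \to \AAA(r(U),r(V)) = H^0 E(U,V)$ coming from $r$; as $r$ is a quasi-equivalence this is a quasi-isomorphism, so $\bar{\AAA} \to E$ is a quasi-isomorphism of bimodules. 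Finally, $\bar{\AAA}$ being $k$-cofibrant, $\CC_{\hoch}(\bar{\AAA}, -) \cong \RHom_{\bar{\AAA}^{\op}\otimes\bar{\AAA}}(\bar{\AAA}, -)$ carries this quasi-isomorphism to a quasi-isomorphism
$$\CC_{\sh}(\bar{\AAA}, E) = \CC_{\hoch}(\bar{\AAA}, E) \cong \CC_{\hoch}(\bar{\AAA}, \bar{\AAA}) = \CC_{\hoch}(\bar{\AAA}) = \CC_{\sh}(\AAA),$$
which completes the proof.

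The only subtlety beyond bookkeeping lies in Step~3: one must verify that the canonical map $\bar{\AAA} \to E$ induces $H^0(r)$ on each $(U,V)$-component after the identification $H^0 E(U,V) = \AAA(r(U),r(V))$. This is a short diagram chase with the definition of the bimodule structure on $E$ and the naturality of $ur \to E$; alternatively, the cohomology of $E$ together with the comparison map can be read off directly from full faithfulness of $u$, projectivity of representable modules, and Yoneda's lemma.
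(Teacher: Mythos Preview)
Your argument is correct and follows the same route as the paper: Step~1 is exactly the paper's proof, which then simply cites \cite{lowenvandenbergh2} for the identification $\CC_{\gro}(\Mod(\AAA)) \cong \CC_{\sh}(\AAA)$ that you unpack in Steps~2--3 via \eqref{eqkey}. Your expansion of that citation is accurate; the only remark is that in the trivial-topology case one can shortcut Step~3 by noting that the Yoneda images $u(U)$ are already injective-resolved by themselves (being projective, hence $\RHom$-computing), so the fibrant replacement $ur \to E$ is already a pointwise quasi-isomorphism and the bimodule $E$ is visibly quasi-isomorphic to $\bar{\AAA}$ without any diagram chase.
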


\begin{proof}
We have $\Lex(\mathsf{free}(\AAA)) \cong \Mod(\mathsf{free}(\AAA)) \cong \Mod(\AAA)$ (see Remark \ref{remsplit}). Hence it follows from \cite{lowenvandenbergh2} that $\CC_{\mathrm{ex}'}(\mathsf{free}(\AAA)) \cong \CC_{\mathrm{sh}}(\AAA)$.
\end{proof}

\subsection{Hochschild cohomology and (bi)sheaf categories}\label{hochbisheaf}
Let $k$ be a field and $\ccc$ a small exact $k$-linear category. Let $\iota: \ccc \lra \Lex(\ccc)$ be the canonical embedding. The results of the previous subsections yield:

\begin{proposition}\label{exabbas}
We have:
$$\CC_{\ex}(\ccc) \cong \RHom_{\Mod(\ccc, \Lex(\ccc))}(\iota, \iota).$$
\end{proposition}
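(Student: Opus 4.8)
The plan is to chain together the identifications already established in the previous subsections. First I would invoke Theorem~\ref{kelcomp} to replace $\CC_{\ex}(\ccc)$ by $\CC_{\ex'}(\ccc) = \CC_{\gro}(\Lex(\ccc))$, thereby reducing the statement to a claim about the Hochschild--Shukla cohomology of the Grothendieck category $\Lex(\ccc)$. Since $k$ is a field, Proposition~\ref{newgroth} (applied to the additive site $(\ccc, \ttt)$ whose sheaf category is $\Sh^{\add}(\ccc) = \Lex(\ccc)$, with $\UUU = \ccc$ and $u = \iota$) gives
$$\CC_{\gro}(\Lex(\ccc)) \cong \RHom_{\Mod(\ccc, \Lex(\ccc))}(\iota, \iota),$$
which is exactly the right-hand side of the proposition. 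So the core of the argument is just: (1) Theorem~\ref{kelcomp}, (2) the definition $\CC_{\ex'}(\ccc) = \CC_{\gro}(\Lex(\ccc))$, (3) Proposition~\ref{newgroth}.

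The step that needs the most care is checking that Proposition~\ref{newgroth} genuinely applies here, i.e. that $\Lex(\ccc)$ really is the additive sheaf category of a single-morphism additive site with $\ccc$ as its underlying linear category and $\iota$ as the canonical map $u$. This is precisely the content of the subsection on exact categories: $\ccc$ carries the single deflation-topology, and by the identification $\Sh^{\add}_{\Lambda}(\ccc) = \Lex_{\Lambda}(\ccc)$ together with $\ccc \subseteq \mathsf{fp}(\Lex(\ccc))$ from Proposition~\ref{epgen}, the pair $(\ccc, \ttt_{\Lambda}^{\add})$ is an additive site with $\Sh^{\add}(\ccc) = \Lex(\ccc)$ and the embedding $C \mapsto \ccc(-,C)$ is the canonical map $u = \iota$. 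Feeding this into the chain of isomorphisms in the proof of Proposition~\ref{newgroth} (which over a field lets one take $\bar{\UUU} = \UUU = \ccc$ and an injective resolution $\iota \lra E$ in $\Mod(\ccc, \Lex(\ccc))$, whence $R(i\circ -)(\iota) \cong iE$ and $\RHom_{\ccc^{\op}\otimes\ccc}(\ccc, R(i\circ-)(\iota)) \cong \RHom_{\Mod(\ccc,\Lex(\ccc))}(\iota,\iota)$) produces the claimed quasi-isomorphism.

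I expect no serious obstacle: every ingredient is in place, and the main risk is purely bookkeeping — matching the site-theoretic notation of \S\ref{subgroth} (the pair $(\UUU, \ttt)$, the map $u$, the dg category $\UUU_{\dg}$ of injective resolutions) with the exact-category notation here ($\ccc$, the deflation topology, $\iota$, and $\ccc_{\dg}$). Concretely the proof is two or three lines: apply Theorem~\ref{kelcomp}, unwind $\CC_{\ex'}(\ccc) = \CC_{\gro}(\Lex(\ccc))$, and quote Proposition~\ref{newgroth} with $\UUU = \ccc$.
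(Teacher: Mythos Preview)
Your proposal is correct and matches the paper's own proof, which is a single line: ``This is an application of Proposition~\ref{newgroth} to $u = \iota: \ccc \lra \Lex(\ccc)$.'' You have simply made explicit the implicit step through Theorem~\ref{kelcomp} and the definition $\CC_{\ex'}(\ccc) = \CC_{\gro}(\Lex(\ccc))$, and you have verified that the site-theoretic hypotheses of Proposition~\ref{newgroth} are met; nothing more is needed.
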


\begin{proof}
This is an application of Proposition \ref{newgroth} to $u = \iota: \ccc \lra \Lex(\ccc)$. 
\end{proof}

Let $I$ denote the identity $\ccc$-bimodule with $I(C',C) = \ccc(C',C)$.
Using the results of \S \ref{sub2var}, we obtain the following symmetric abelian expression, which also appeared in \cite{kaledin1}:

\begin{theorem}\label{bilex}
We have:
$$\CC_{\ex}(\ccc) \cong  \RHom_{\Fun^{\diamond}_{\diamond}(\ccc^{\op} \times \ccc)}(I,I).$$
\end{theorem}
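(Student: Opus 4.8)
The strategy is to connect the two-sided expression $\RHom_{\Fun^{\diamond}_{\diamond}(\ccc^{\op} \times \ccc)}(I,I)$ to the one-sided expression $\RHom_{\Mod(\ccc, \Lex(\ccc))}(\iota, \iota)$ that we already know computes $\CC_{\ex}(\ccc)$ by Proposition \ref{exabbas}. The bridge is the pair of localizations studied in \S \ref{sub2var}, in particular Proposition \ref{1naar2}, which tells us precisely when passing from sheaves in one variable to sheaves in both variables does not change the relevant $\RHom$.

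First I would set up the relevant functors. We have the inclusion $i_{\bbb} = i_{\ccc}: \Mod(\ccc, \Lex(\ccc)) \lra \Mod(\ccc, \Mod(\ccc)) = \Fun^{\diamond}_{\triangleleft}(\ccc^{\op} \times \ccc)$ (in the incarnation where we are left exact only in the second variable) and the further inclusion $\Fun^{\diamond}_{\diamond}(\ccc^{\op} \times \ccc) \lra \Fun^{\diamond}_{\triangleleft}(\ccc^{\op} \times \ccc)$; note that $I$, being represented by the identity bimodule $\ccc(-,-)$, lands in $\Fun^{\diamond}_{\diamond}$, and under the one-variable incarnation it corresponds exactly to $\iota: \ccc \lra \Lex(\ccc)$, $C \longmapsto \ccc(-,C)$. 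So the content to be proven is that the adjunction $Ri$ between these derived categories identifies $\RHom(I,I)$ computed in the two-variable sheaf category with $\RHom(\iota,\iota)$ computed in the one-variable sheaf category. Since $Ri$ is fully faithful (Proposition \ref{fff}), it suffices to show that $Ri$ applied to $\iota$ (viewed in $D(\Mod(\ccc,\Lex(\ccc)))$) lands in the image of $D(\Fun^{\diamond}_{\diamond}(\ccc^{\op}\times\ccc))$, i.e. that its image is cohomological in both variables, and that this image agrees with the image of $I$ under the corresponding $Ri$ from the two-variable category.

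The key step is to invoke Proposition \ref{1naar2}: the bimodule $\iota = I$, viewed as a functor $\ccc \lra \Lex(\ccc)$, $C \longmapsto \ccc(-,C)$, is \emph{exact} — it sends a conflation $A \lra B \lra C$ in $\ccc$ to the short exact sequence $0 \lra \ccc(-,A) \lra \ccc(-,B) \lra \ccc(-,C) \lra 0$ in $\Lex(\ccc)$, which is exactly the characterization of conflations recalled in \S \ref{sec.1} via the embedding $\ccc \lra \Lex(\ccc)$. Hence by the second half of Proposition \ref{1naar2}, $Ri_{\ccc}(I)$ is cohomological in both variables and $Ri_{\ccc}(I) \cong Ri(I)$, where the latter $Ri$ is $D(\Fun^{\diamond}_{\diamond}(\ccc^{\op}\times\ccc)) \lra D(\Fun^{\diamond}(\ccc^{\op}\times\ccc))$. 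Combining this with the chain of identifications, using full faithfulness of the various $Ri$'s (Proposition \ref{fff}) to pass $\RHom$'s back and forth, we get
$$\RHom_{\Fun^{\diamond}_{\diamond}(\ccc^{\op}\times\ccc)}(I,I) \cong \RHom_{\Fun^{\diamond}(\ccc^{\op}\times\ccc)}(Ri(I), Ri(I)) \cong \RHom_{\Mod(\ccc,\Lex(\ccc))}(\iota,\iota),$$
and the right-hand side is $\CC_{\ex}(\ccc)$ by Proposition \ref{exabbas}.

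The main obstacle I anticipate is bookkeeping: making sure that at each stage the object $I$ really is the image of $\iota$ under the relevant equivalence of incarnations, and that the localizations compose correctly, i.e. that $Ri: D(\Fun^{\diamond}_{\diamond}) \lra D(\Fun^{\diamond})$ factors (after identifying incarnations) as the composite of $Ri_{\ccc}: D(\Mod(\ccc,\Lex(\ccc))) \lra D(\Mod(\ccc,\Mod(\ccc)))$ with a further fully faithful derived inclusion, so that applying Proposition \ref{1naar2} is legitimate. Once the diagram of localizations is laid out carefully — which is essentially already done in \S \ref{sub2var} — the argument is a short formal consequence of full faithfulness of $Ri$ plus the exactness of $\iota$, and no further computation is needed.
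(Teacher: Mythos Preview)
Your proposal is correct and follows essentially the same route as the paper: start from Proposition \ref{exabbas}, identify $\iota$ with $I$ under $\Mod(\ccc,\Lex(\ccc)) \cong \Fun^{\diamond}_{\triangleleft}(\ccc^{\op}\times\ccc)$, observe that $\iota:\ccc\lra\Lex(\ccc)$ is exact so Proposition \ref{1naar2} gives $Ri_{\ccc}(I)\cong Ri(I)$, and then transfer the $\RHom$ into $\Fun^{\diamond}_{\diamond}$. The only cosmetic difference is that the paper phrases the final identification via the adjunction $(a,Ri)$ (writing $\RHom_{\Fun^{\diamond}}(I,Rj(I))\cong\RHom_{\Fun^{\diamond}_{\diamond}}(I,I)$), whereas you phrase it via full faithfulness of $Ri$ (Proposition \ref{fff}); since $a$ is exact and $a\,Ri\cong 1$, these are the same statement.
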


\begin{proof}
Consider $i_1: \Fun^{\diamond}_{\triangleleft}(\ccc^{\op} \times \ccc) \lra \Fun^{\diamond}_{\diamond}(\ccc^{\op} \times \ccc)$, which is isomorphic to $i-: \Mod(\ccc, \Lex(\ccc)) \lra \Mod(\ccc, \Mod(\ccc))$.
Proposition \ref{exabbas} translates into $\CC_{\ex}(\ccc) \cong \RHom_{\Fun^{\diamond}_{\triangleleft}(\ccc^{\op} \times \ccc)}(I,I)$. From Proposition \ref{1naar2} we further obtain $Ri_1(I) \cong Rj(I)$ for $j: \Fun^{\diamond}_{\diamond}(\ccc^{\op} \times \ccc) \lra \Fun^{\diamond}(\ccc^{\op} \times \ccc)$, so 
$$\CC_{\ex}(\ccc) \cong \RHom_{\Fun^{\diamond}}(I, Rj(I)) \cong \RHom_{\Fun^{\diamond}_{\diamond}(\ccc^{\op} \times \ccc)}(I,I)$$
by adjunction.
\end{proof}

\subsection{Hochschild cohomology and large additive functor categories}\label{hhtop}
Let $k$ be a field. The following definition of Hochschild cohomology of a (possibly large) abelian category $\ddd$ was communicated to the second author by Ragnar Buchweitz, who attributed it to John Greenlees. One considers the (possibly large) abelian category $\Add(\ddd, \ddd)$ of additive functors from $\ddd$ to $\ddd$ and puts
$$HH^n_{\mathrm{top}}(\ddd) = \Ext^n_{\Add(\ddd, \ddd)}(1_{\ddd}, 1_{\ddd}).$$
This subsection is devoted to the proof of the following

\begin{theorem}\label{maingroth}
For a Grothendieck category $\ddd$, we have
$$HH^n_{\mathrm{gro}}(\ddd) \cong HH^n_{\mathrm{top}}(\ddd).$$
\end{theorem}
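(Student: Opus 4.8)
### Proof plan for Theorem \ref{maingroth}

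The plan is to compare both cohomology theories with a common intermediate expression, namely $\RHom$ in the category $\Mod(\inj(\ddd), \ddd)$ of additive functors from the category $\inj(\ddd)$ of injectives into $\ddd$, and to use the fact that we are working over a field so that the technical flatness hypotheses of Section~\ref{sec.2} are automatic. The starting point is the definition $\CC_{\gro}(\ddd) = \CC_{\sh}(\inj(\ddd))$, together with the general principle from Proposition~\ref{newgroth}: for the additive site $\UUU = \inj(\ddd)$ equipped with the (split, hence trivial) topology, $\Sh(\UUU) = \ddd$ via the Gabriel--Popescu-type representation, and one obtains
$$\CC_{\gro}(\ddd) \cong \RHom_{\Mod(\inj(\ddd), \ddd)}(u, u),$$
where $u\colon \inj(\ddd) \lra \ddd$ is the inclusion. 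Passing to $HH^n$ just means taking cohomology of this complex, so the left-hand side of the theorem is $\Ext^n_{\Mod(\inj(\ddd), \ddd)}(u,u)$.

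Next I would identify the right-hand side. The key observation is that restriction along $u\colon \inj(\ddd)\lra\ddd$ gives a functor $\Add(\ddd,\ddd)\lra \Mod(\inj(\ddd),\ddd)$ sending $1_\ddd$ to $u$. I would argue this functor induces an isomorphism on $\Ext$'s out of $1_\ddd$ by constructing a projective (or at least $\Ext$-acyclic) resolution of $1_\ddd$ in $\Add(\ddd,\ddd)$ whose restriction to $\inj(\ddd)$ is still a resolution computing the right groups — concretely, one uses that every object of $\ddd$ embeds into an injective, so the ``representable'' functors $\ddd(Q,-)$ with $Q$ injective, composed appropriately, are enough to build a resolution of $1_\ddd$ in $\Add(\ddd,\ddd)$; and since these functors are exact on $\inj(\ddd)$ in the relevant sense, the restriction of such a resolution remains exact. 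Dually, an injective resolution of $1_\ddd$ in $\Add(\ddd,\ddd)$, built from functors of the form $\prod \ddd(-,Q_\alpha)$ with $Q_\alpha$ injective, restricts to something computing $\Ext$ in $\Mod(\inj(\ddd),\ddd)$. Either way, the comparison map $\Ext^n_{\Add(\ddd,\ddd)}(1_\ddd,1_\ddd)\lra \Ext^n_{\Mod(\inj(\ddd),\ddd)}(u,u)$ is an isomorphism.

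I expect the main obstacle to be the last step: justifying that restriction along $\inj(\ddd)\hookrightarrow\ddd$ behaves well at the level of derived functors, i.e. that one does not lose information by only testing on injectives. The point to nail down is that an additive functor $F\colon\ddd\lra\ddd$ is determined, together with its higher $\Ext$-theoretic behavior relative to $1_\ddd$, by its restriction to $\inj(\ddd)$ — this should follow because $1_\ddd$ (and the resolving objects one chooses) are built functorially from injective (co)resolutions, so that evaluating on a general object $X\in\ddd$ can be replaced by evaluating on an injective resolution $X\lra E^\bullet$ of $X$, all of whose terms lie in $\inj(\ddd)$. Making this precise requires a Reedy/fibrant-replacement argument of the type already used in \S\ref{subgroth} and in \cite{lowenvandenbergh2}: one replaces $u$ by a fibrant object $E$ in $\mathsf{DgFun}(\inj(\ddd), C(\ddd))$, checks the resulting bimodule computes $\CC_{\hoch}$, and matches this against the bar-type resolution of $1_\ddd$ in $\Add(\ddd,\ddd)$. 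Once this identification of resolutions is in hand, both sides equal $\Ext^n_{\Mod(\inj(\ddd),\ddd)}(u,u)$ and the theorem follows.
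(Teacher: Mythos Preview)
Your first step contains a genuine error: the identification $\Sh(\inj(\ddd)) = \ddd$ for the trivial topology is false. With the trivial topology one has $\Sh(\inj(\ddd)) = \Mod(\inj(\ddd))$, and this is essentially never equivalent to $\ddd$ --- injectives are cogenerators, not generators, so there is no Gabriel--Popescu representation of $\ddd$ in terms of $\inj(\ddd)$. (Already for $\ddd = \Mod(R)$ the category $\Mod(\inj(\Mod(R)))$ is far larger than $\Mod(R)$.) Moreover $\inj(\ddd)$ is not small, so Proposition~\ref{newgroth} does not apply as stated. The definition $\CC_{\gro}(\ddd) = \CC_{\sh}(\inj(\ddd))$ does \emph{not} arise by applying Proposition~\ref{newgroth} to $\UUU = \inj(\ddd)$; it is a separate definition, and the content of \S\ref{subgroth} is precisely to compare it with expressions coming from a \emph{small} generating site $\UUU$ with $\Sh(\UUU) \cong \ddd$.

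The paper's route is therefore quite different from yours. One fixes a small $\UUU$ with $\ddd \cong \Sh(\UUU)$ and enlarges the universe so that $\Mod(\UUU)$ becomes small. The key computational input is the module case (Lemma~\ref{lemlin}): $\RHom_{\Mod(\AAA^{\op}\otimes\AAA)}(I,-)$ agrees with $\RHom_{\Add(\Mod(\AAA),-)}(j,-)$ via the bar resolution. Starting from Proposition~\ref{newgroth} applied to the site $\UUU$, one then passes through $\Add(\Mod(\UUU), \mathsf{V}\text{-}\Mod(\UUU))$ using Lemma~\ref{lemlin}, and finally descends to $\Add(\Sh(\UUU), \mathsf{V}\text{-}\Sh(\UUU))$ by two adjunctions: one coming from the localization $(a',i')$ in the target, and one coming from precomposition with the \emph{exact} pair $(a,i)$ in the source. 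A last step removes the auxiliary universe. Your proposed restriction $\Add(\ddd,\ddd) \lra \Mod(\inj(\ddd),\ddd)$ has no evident adjoint and the ``resolutions by $\ddd(Q,-)$ with $Q$ injective'' you sketch are not projective objects of $\Add(\ddd,\ddd)$, so that comparison would need a completely different justification even if the first step were repaired.
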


The theorem is known to hold true for module categories (see \cite{jibladzepirashvili1}, \cite{jibladzepirashvili}), and the proof of the theorem relies heavily on this case, which we first discuss.

For later use, apart from our standard universe $\mathsf{U}$, we introduce another universe $\mathsf{U} \subseteq \mathsf{V}$. As usual, $\mathsf{U}$ is suppressed in the notations.
Let $\AAA$ be a small linear category.
Consider the adjoint pair
$$R: \Add(\Mod(\AAA), \mathsf{V}\text{-} \Mod(\AAA)) \lra \Add(\AAA, \mathsf{V}\text{-} \Mod(\AAA)) \cong \mathsf{V}\text{-} \Mod(\AAA^{\op} \otimes \AAA): F \longmapsto F|_{\AAA}$$
and $$L: \mathsf{V}\text{-} \Mod(\AAA^{\op} \otimes \AAA) \lra \Add(\Mod(\AAA), \mathsf{V}\text{-} \Mod(\AAA)): M \longmapsto M \otimes_{\AAA} -.$$
Let $I \in \mathsf{V}\text{-} \Mod(\AAA^{\op} \otimes \AAA)$ be the identity bimodule and $j: \Mod(\AAA) \lra \mathsf{V}\text{-} \Mod(\AAA)$ the natural inclusion.

\begin{lemma}[see \cite{jibladzepirashvili1}, \cite{jibladzepirashvili}]\label{lemlin}
For $M \in C(\Add(\Mod(\AAA), \mathsf{V}\text{-} \Mod(\AAA)))$, we have
$$\RHom_{\mathsf{V}\text{-} \Mod(\AAA^{\op} \otimes \AAA)}(I, M|_{\AAA}) \cong \RHom_{\Add(\Mod(\AAA), \mathsf{V}\text{-} \Mod(\AAA))}(j, M).$$
\end{lemma}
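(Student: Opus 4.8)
\emph{Approach.} The plan is to obtain the asserted isomorphism from the underived adjunction $L\dashv R$ by transporting a projective resolution of the identity bimodule $I$ through $L$; this is the argument of Jibladze and Pirashvili. The two inputs that drive it are that $R$ is exact and that $I$ is flat as a one-sided $\AAA$-module.

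First I would collect the structural facts. The functor $R$ is restriction of additive functors along the Yoneda embedding $\AAA\hookrightarrow\Mod(\AAA)$; since kernels, cokernels and arbitrary (co)limits in $\Add(\Mod(\AAA),\mathsf{V}\text{-}\Mod(\AAA))$ are computed pointwise, $R$ is exact, and therefore its left adjoint $L$ sends projective objects to projective objects. Moreover $L(I)\cong j$: for $N\in\Mod(\AAA)$ the co-Yoneda lemma gives $I\otimes_{\AAA}N\cong N$ naturally, and the resulting functor $\Mod(\AAA)\to\mathsf{V}\text{-}\Mod(\AAA)$ is exactly the inclusion $j$.

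Next I would pick a resolution $\cdots\to P_1\to P_0\to I\to 0$ in $\mathsf{V}\text{-}\Mod(\AAA^{\op}\otimes\AAA)$ with each $P_n$ projective — this exists because $\mathsf{V}\text{-}\Mod(\AAA^{\op}\otimes\AAA)$ is a module category over a small linear category, with the representable bimodules as projective generators — and apply $L$. This yields a bounded-above complex $L(P_{\bullet})$ of projective objects of $\Add(\Mod(\AAA),\mathsf{V}\text{-}\Mod(\AAA))$ mapping to $L(I)\cong j$. The crucial claim, and the only point that is not pure formalism, is that $L(P_{\bullet})\to j$ is a quasi-isomorphism, so that $L(P_{\bullet})$ is a $K$-projective resolution of $j$. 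To check it one evaluates at an arbitrary $N\in\Mod(\AAA)$, where the map reads $P_{\bullet}\otimes_{\AAA}N\to I\otimes_{\AAA}N=N$: each projective bimodule $P_n$ is a direct summand of a direct sum of representable bimodules, hence flat as a one-sided $\AAA$-module, so $P_{\bullet}$ is a flat one-sided resolution of $I$, the complex $P_{\bullet}\otimes_{\AAA}N$ computes $\Tor^{\AAA}_{\bullet}(I,N)$, and these groups vanish in positive degrees and equal $N$ in degree zero because $I$ is itself one-sided projective. This is exactly where the specific structure of the diagonal bimodule $I$ enters.

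Finally I would put the pieces together. For any complex $M$, since $L(P_{\bullet})$ is a $K$-projective resolution of $j$ one has $\RHom_{\Add(\Mod(\AAA),\mathsf{V}\text{-}\Mod(\AAA))}(j,M)\cong\Hom^{\bullet}(L(P_{\bullet}),M)$; the adjunction $L\dashv R$ applied in each degree identifies this with $\Hom^{\bullet}(P_{\bullet},M|_{\AAA})$; and since $P_{\bullet}\to I$ is a bounded-above resolution by projectives, this in turn is $\RHom_{\mathsf{V}\text{-}\Mod(\AAA^{\op}\otimes\AAA)}(I,M|_{\AAA})$. Chaining these isomorphisms gives the statement. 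A useful feature of running the argument through the explicit resolution $L(P_{\bullet})$ is that it never requires the large category $\Add(\Mod(\AAA),\mathsf{V}\text{-}\Mod(\AAA))$ to have enough injectives or an otherwise well-behaved derived category.
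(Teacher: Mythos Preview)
Your argument is correct and follows the same strategy as the paper: resolve $I$ by projectives, push through $L$, verify by pointwise evaluation that the result resolves $j$, then use the adjunction $L\dashv R$. The only difference is that the paper chooses the bar resolution $B(I)\to I$ explicitly, so that after applying $L$ and evaluating at $X\in\Mod(\AAA)$ one literally obtains the bar resolution of $X$; your version with an arbitrary projective resolution and the $\Tor$ argument is a mild abstraction of the same step (and, like the paper's, uses that $k$ is a field so that representable bimodules are one-sided flat and $B^n(I)$ is projective).
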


\begin{proof}
Let $B(I) \lra I$ be the bar resolution of $I$ in $\mathsf{V}\text{-} \Mod(\AAA^{\op} \otimes \AAA)$. Concretely, we have $$B^n(I) = \oplus_{A_0, \dots, A_n} \AAA(A_n,-) \otimes_k \AAA(A_{n-1}, A_n) \otimes_k \dots \otimes_k \AAA(-,A_0).$$ 
Projectivity of $L(B^n(I))$ follows automatically from the adjunction since $R$ is exact. To see that $L(B(I)) \lra L(I) = j$ remains a resolution, it suffices to check its evaluation at an arbitrary $X \in \Mod(\AAA)$. We have
$$L(B^n(I))(X) = \oplus_{A_0, \dots, A_n} X(A_n) \otimes_k \AAA(A_{n-1}, A_n) \otimes_k \dots \otimes_k \AAA(-,A_0),$$ 
so this is precisely the bar resolution of $X$. Finally, we can write 
$$\begin{aligned}
\RHom_{\mathsf{V}\text{-} \Mod(\AAA^{\op} \otimes \AAA)}(I, M|_{\AAA}) & = \Hom_{\mathsf{V}\text{-} \Mod(\AAA^{\op} \otimes \AAA)}(B(I), R(M))\\ & = \Hom_{\Add(\Mod(\AAA), \mathsf{V}\text{-} \Mod(\AAA))}(L(B(I)), M) \\ & = \RHom_{\Add(\Mod(\AAA), \mathsf{V}\text{-} \Mod(\AAA))}(j, M).
\end{aligned}$$
\end{proof}

Obviously, taking $\mathsf{U} = \mathsf{V}$ and $M = j = 1_{\Mod(\AAA)}$ in Lemma \ref{lemlin} yields Theorem \ref{maingroth} for $\ddd = \Mod(\AAA)$.

Now let $\ddd$ be an arbitrary Grothendieck category and choose an equivalence $\ddd \cong \Sh(\UUU) = \Sh(\UUU, \ttt)$ for an additive topology $\ttt$ on a small $\Z$-linear category $\UUU$ (see \S \ref{paraddtop}). From now on, we choose $\mathsf{U} \subseteq \mathsf{V}$ in such a way that $\Mod(\UUU)$ and $\Sh(\UUU)$ are $\mathsf{V}$-small, and we consider the categories $\mathsf{V}\text{-} \Mod(\UUU)$, $\mathsf{V}\text{-} \Sh(\UUU)$. We have a commutative diagram:
$$\xymatrix{{\Mod(\UUU)} \ar[r]^j \ar@/_10pt/[d]_{a} & {\mathsf{V}\text{-} \Mod(\UUU)} \ar@/_10pt/[d]_{a'}\\
{\Sh(\UUU)} \ar[u]_{i} \ar[r]_{j'} & {\mathsf{V}\text{-} \Sh(\UUU).} \ar[u]_{i'}}.$$
The proof consists of three steps, and some remarks on how to get rid of the additional universe $\mathsf{V}$. 

First,  we take an injective resolution $j'a \lra E$ in the $\mathsf{V}$-Grothendieck category $\Add(\Mod(\UUU), \mathsf{V}\text{-} \Sh(\UUU))$. Then the restriction $j'aI \lra EI$ for $I: \UUU \lra \Mod(\UUU)$ yields a functorial choice of injective resolutions $a'(\UUU(-,U)) \lra E(\UUU(-,U))$ in $\mathsf{V}\text{-} \Sh(\UUU)$. By Proposition \ref{newgroth} and Lemma \ref{lemlin}, we have
\begin{equation}\label{step1}
\begin{aligned}
\CC_{\mathrm{gro}}(\mathsf{V}\text{-} \Sh(\UUU))
& \cong \RHom_{\mathsf{V}\text{-} \Mod(\UUU^{\op} \otimes \UUU)}(jI, i'EI) \\
 & \cong \RHom_{\Add(\Mod(\UUU), \mathsf{V}\text{-} \Mod(\UUU))}(j, i'E).
 \end{aligned}
 \end{equation}

For the second step, we note that the localization $(a',i')$ induces a localization
$$a'\circ -: \Add(\Mod(\UUU), \mathsf{V}\text{-} \Mod(\UUU)) \lra \Add(\Mod(\UUU), \mathsf{V}\text{-} \Sh(\UUU))$$ and
$$i'\circ -: \Add(\Mod(\UUU), \mathsf{V}\text{-} \Sh(\UUU)) \lra \Add(\Mod(\UUU), \mathsf{V}\text{-} \Mod(\UUU)).$$
We thus obtain
\begin{equation}\label{step2}
\begin{aligned}
\RHom_{\Add(\Mod(\UUU), \mathsf{V}\text{-} \Mod(\UUU))}(j, i'E) & = \RHom_{\Add(\Mod(\UUU), \mathsf{V}\text{-} \Mod(\UUU))}(j, R(i'-)E) \\ & \cong \RHom_{\Add(\Mod(\UUU), \mathsf{V}\text{-} \Sh(\UUU))}(a'j, E).
\end{aligned}
\end{equation}

For the third step, we use the following localization induced by $(a,i)$:
$$-\circ a: \Add(\Sh(\UUU), \mathsf{V}\text{-} \Sh(\UUU)) \lra \Add(\Mod(\mathsf{U}), \mathsf{V}\text{-} \Sh(\UUU)),$$
$$-\circ i: \Add(\Mod(\mathsf{U}), \mathsf{V}\text{-} \Sh(\UUU)) \lra \Add(\Sh(\UUU), \mathsf{V}\text{-} \Sh(\UUU)).$$
Since both functors are exact, we obtain:
\begin{equation}\label{step3}
\begin{aligned}
\RHom_{\Add(\Mod(\UUU), \mathsf{V}\text{-} \Sh(\UUU))}(a'j, E) & \cong \RHom_{\Add(\Mod(\UUU), \mathsf{V}\text{-} \Sh(\UUU))}(j'a, E)\\ & \cong \RHom_{\Add(\Sh(\UUU), \mathsf{V}\text{-} \Sh(\UUU))}(j', Ei)\\
& \cong \RHom_{\Add(\Sh(\UUU), \mathsf{V}\text{-} \Sh(\UUU))}(j', j'ai)\\
& \cong \RHom_{\Add(\Sh(\UUU), \mathsf{V}\text{-} \Sh(\UUU))}(j', j')
\end{aligned}
\end{equation}

Putting \eqref{step1}, \eqref{step2} and \eqref{step3} together, we now arrive at
$$\CC_{\mathrm{gro}}(\mathsf{V}\text{-} \Sh(\UUU)) \cong \RHom_{\Add(\Sh(\UUU), \mathsf{V}\text{-} \Sh(\UUU))}(j', j').$$
Finally, we need some remarks concerning the universe $\mathsf{V}$. The functor $j': \Sh(\UUU) \lra \mathsf{V}\text{-} \Sh(\UUU)$ a priori does not preserve injective objects, whereas $j: \Mod(\UUU) \lra \mathsf{V}\text{-} \Mod(\UUU)$ does (using the Baer criterium). However, $\Sh(\UUU)$ has enough injectives that are preverved by $j'$. Indeed, for a sheaf $F \in \Sh(\UUU)$, an essential monomorphisms $iF \lra M$ to an injective $M \in \Mod(\UUU)$ actually yields a monomorphism into an injective sheaf, and all involved notions are preserved by $j$. In particular, $j'$ preserves $\Ext$. Thinking of actual extensions, it is then readily seen that 
$$j'\circ -: \Add(\Sh(\UUU), \Sh(\UUU)) \lra \Add(\Sh(\UUU), \mathsf{V}\text{-} \Sh(\UUU))$$
also preserves $\Ext$, whence
$$\Ext^n_{\Add(\Sh(\UUU), \Sh(\UUU))}(1_{\Sh(\UUU)}, 1_{\Sh(\UUU)}) \cong \Ext^n_{\Add(\Sh(\UUU), \mathsf{V}\text{-} \Sh(\UUU))}(j', j').$$
Let us now look at $\CC_{\mathrm{gro}}(\Sh(\UUU))$. If we take for every $U \in \UUU$ a ``special'' injective resolution $a(\UUU(-,U)) \lra E_U$, then the dg category $\UUU_{\mathrm{dg}} \subseteq C(\Sh(\UUU))$ of all these resolutions satisfies
$$\CC_{\mathrm{gro}}(\Sh(\UUU)) \cong \CC_{\mathrm{hoch}}(\UUU_{\mathrm{dg}}).$$
Taking the images of the $E_U$ under $j$ yields a quasi-equivalent dg category, whence
$$\CC_{\mathrm{gro}}(\Sh(\UUU)) \cong \CC_{\mathrm{gro}}(\mathsf{V}\text{-} \Sh(\UUU)).$$
This finishes the proof of Theorem \ref{maingroth}.

\subsection{Mac Lane cohomology of $\Z$-linear categories}

Mac Lane cohomology originated in \cite{maclane} as a cohomology theory for rings $A$ taking values in bimodules. In \cite{jibladzepirashvili}, the authors discovered an incarnation allowing for a natural generalization to Mac Lane cohomology with values in \emph{non-additive} functors $\mathsf{free}(A) \lra \Mod(A)$. We review the situation for a small $\Z$-linear category $\AAA$.

For an abelian group $A$, denote by $Q(A)$ the cube construction of $A$ \cite{maclane}. This is a cochain complex of abelian groups in nonpositive degrees, together with an augmentation $Q^0(A) \lra A$ such that $H^0(Q(A)) \cong A$. For abelian groups $A$ and $B$, there is a natural pairing
$$Q(A) \otimes Q(B) \lra Q(A \otimes B).$$
This allows us to define a differential graded $\Z$-linear category $Q(\AAA)$ with
$Q(\AAA)(A,B) = Q(\AAA(A,B))$ for $A, B \in \AAA$ and composition morphisms
$$Q(\AAA(B,C)) \otimes Q(\AAA(A,B)) \lra Q(\AAA(B,C) \otimes \AAA(A,C)) \lra Q(\AAA(A,C))$$
just like in the ring case.
For $M \in C(\Mod(\AAA^{\op} \otimes \AAA))$, we put 
$$\CC_{\mac'}(\AAA, M) = \CC_{\hoch}(Q(\AAA), M)$$
where the right hand side is Hochschild cohomology of the dg category $Q(\AAA)$ with values in the dg bimodule $M$.

Now consider the inclusion $I: \tilde{\AAA} \lra \Mod(\AAA)$ of a full additive subcategory containing $\AAA$, and a cochain complex $M \in C(\Add(\tilde{\AAA}, \Mod(\AAA)))$. We denote both the restriction of $M$ to $C(\Add(\AAA, \Mod(\AAA)) = C(\Mod(\AAA^{\op} \otimes \AAA))$ and the image of $M$ in $C(\Fun(\tilde{\AAA}, \Mod(\AAA))$ - the category of cochain complexes of non-additive functors - by $M$.

We have:

\begin{theorem}\label{pira} \cite[Theorem A]{jibladzepirashvili}
There is an isomorphism
$$\CC_{\mac'}(\AAA, M) \cong \RHom_{\Fun(\tilde{\AAA}, \Mod(\AAA))}(I, M).$$
\end{theorem}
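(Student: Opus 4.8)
The plan is to transport the proof of \cite[Theorem A]{jibladzepirashvili} from a one-object ring to the $\Z$-linear category $\AAA$: the cube construction $Q$, its multiplicative pairing, the functor categories involved, and MacLane's acyclicity theorem all make sense verbatim in this generality, so the argument is formally unchanged. The first point is that $\Fun(\tilde{\AAA}, \Mod(\AAA))$ is a Grothendieck category with enough projectives: for $X \in \tilde{\AAA}$ the evaluation $\ev_X \colon F \longmapsto F(X)$ is exact, hence its left adjoint $L_X \colon \Mod(\AAA) \lra \Fun(\tilde{\AAA}, \Mod(\AAA))$ preserves projectives, so for $A \in \AAA$ the functor $P_{X,A} = L_X(\AAA(-,A))$ is projective, and by adjunction together with Yoneda $\Hom_{\Fun(\tilde{\AAA}, \Mod(\AAA))}(P_{X,A}, F) \cong F(X)(A)$; these generate. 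Thus $\RHom_{\Fun(\tilde{\AAA}, \Mod(\AAA))}(I, M)$ is computed by $\Hom(P_\bullet, M)$ for any projective resolution $P_\bullet \lra I$ built from the $P_{X,A}$.

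Next one must produce the right resolution, and this is where the cube construction enters. Using that $Q$ is functorial, takes values in bounded-above complexes of free abelian groups, has an augmentation $Q^0(B) \lra B$ with $H^0(Q(B)) \cong B$, and carries the pairing $Q(B) \otimes Q(C) \lra Q(B \otimes C)$, one assembles --- exactly as in \cite{maclane} and \cite{jibladzepirashvili} --- an explicit complex $P_\bullet \lra I$ in $\Fun(\tilde{\AAA}, \Mod(\AAA))$ whose degree-$n$ term is a direct sum, indexed by $(n+1)$-tuples $(A_0, \dots, A_n)$ of objects of $\AAA$ and by the homogeneous basis elements (``cube indices'') of the degreewise free complex $Q(\AAA(A_{n-1}, A_n)) \otimes_{\Z} \dots \otimes_{\Z} Q(\AAA(A_0, A_1))$, of copies of $P_{A_0, A_n}$, with differential built from the cube differentials, the composition maps of $\AAA$, and the bar differential. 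That $P_\bullet \lra I$ is acyclic is precisely MacLane's computation of the cube construction; in the many-object setting this is verified over the individual hom-groups $\AAA(A,B)$, so it reduces with no change to the abelian-group case.

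The final step is to apply $\Hom_{\Fun(\tilde{\AAA}, \Mod(\AAA))}(-, M)$ to $P_\bullet$. Converting the direct sums into products, inserting $\Hom(P_{A_0, A_n}, M) \cong M(A_0)(A_n)$, and --- crucially --- using that $M$ is \emph{additive} over $\tilde{\AAA}$, so that its value on a biproduct $A_0 \oplus \dots \oplus A_n$ of $\AAA$-objects is determined by the bimodule $M|_{\AAA}$ (this is the only place where $M \in C(\Add(\tilde{\AAA}, \Mod(\AAA)))$, rather than merely $C(\Fun(\tilde{\AAA}, \Mod(\AAA)))$, is used, and only those biproducts, which lie in every such $\tilde{\AAA}$, occur), one identifies $\Hom(P_n, M)$ with
$$\prod_{A_0, \dots, A_n} \Hom_{\Z}\!\bigl(Q(\AAA(A_{n-1}, A_n)) \otimes_{\Z} \dots \otimes_{\Z} Q(\AAA(A_0, A_1)),\, M(A_0, A_n)\bigr),$$
that is, with the $n$-th column of the Hochschild bicomplex defining $\CC_{\hoch}(Q(\AAA), M)$; the differentials match because the cube pairing is compatible with composition. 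Hence $\Hom(P_\bullet, M) = \CC_{\hoch}(Q(\AAA), M) = \CC_{\mac'}(\AAA, M)$, which is the asserted isomorphism.

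The step with genuine content is the acyclicity of $P_\bullet \lra I$ --- everything else is adjunction bookkeeping --- and this is MacLane's cube-construction theorem, which one must check passes unchanged to the many-object setting (it does, since $Q$ is applied to each $\AAA(A,B)$ separately and the many-object structure enters only through the evident composition and bar maps). A secondary, purely technical point is size: when $\tilde{\AAA}$ is large one works inside a universe in which $\tilde{\AAA}$ and $\Mod(\AAA)$ are small, as elsewhere in the paper; the construction also makes visible that the result does not depend on the choice of the full additive subcategory $\tilde{\AAA} \supseteq \AAA$.
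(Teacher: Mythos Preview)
The paper does not supply a proof of this theorem; it simply records the statement and cites \cite[Theorem~A]{jibladzepirashvili}. So there is nothing to compare against on the paper's side, and your proposal is more than the paper itself offers.

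Your outline follows the Jibladze--Pirashvili strategy correctly in broad strokes: produce an explicit complex $P_\bullet \to I$ in $\Fun(\tilde{\AAA},\Mod(\AAA))$ built from the representable projectives $P_{X,A}$ and the cube construction, check that it is a projective resolution, and then identify $\Hom(P_\bullet, M)$ with $\CC_{\hoch}(Q(\AAA),M)$. Two points deserve sharpening. First, the sentence ``That $P_\bullet \to I$ is acyclic is precisely MacLane's computation of the cube construction'' undersells what is happening: the cube complex $Q(A) \to A$ is \emph{not} a quasi-isomorphism of abelian groups (its higher homology is the stable homology of Eilenberg--Mac Lane spaces), so acyclicity of $P_\bullet \to I$ does not reduce to acyclicity of $Q$ on hom-groups. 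What Jibladze--Pirashvili actually prove is that the terms $Q^{-p}(-)$, regarded as non-additive functors, are projective in the functor category, and that the combined bar--cube complex resolves $I$ there; this is the substantive content of their Theorem~A and should be invoked as such rather than attributed to Mac Lane. Second, your remark that additivity of $M$ enters through biproducts $A_0 \oplus \cdots \oplus A_n$ is off: your own resolution uses only $P_{A_0,A_n}$ with $A_0,A_n \in \AAA$, so $\Hom(P_\bullet,M)$ already depends only on $M|_{\AAA}$, and no biproducts occur. The additivity hypothesis on $M$ is there so that $M$ on $\tilde{\AAA}$ is the canonical extension of a genuine $\AAA$-bimodule, making the statement independent of the choice of $\tilde{\AAA}$; it is not used in the mechanics of the computation.
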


Consider the following two variants of Mac Lane cohomology:

\begin{definition}\label{defmac}
\begin{enumerate}
\item For a $\Z$-linear category $\AAA$ and $T \in C(\Fun(\AAA, \Mod(\AAA)))$, 
$$\CC_{\mac}(\AAA, T) = \RHom_{\Fun(\AAA, \Mod(\AAA))}(I, T).$$
\item For a $\Z$-linear category $\AAA$ and $T \in C(\Fun(\mathsf{free}(\AAA), \Mod(\AAA)))$, 
$$\CC_{\mac'}(\AAA, T) = \RHom_{\Fun(\mathsf{free}(\AAA), \Mod(\AAA))}(I, T).$$
\end{enumerate}
\end{definition}

\begin{remark}\label{remmac}
The two notions in Definition \ref{defmac} are related in the following way. It $\AAA$ is additive, then $\AAA \cong \free(\AAA)$ and if $T$ and $T'$ correspond under the equivalence $C(\Fun(\AAA, \Mod(\AAA))) \cong C(\Fun(\free(\AAA), \Mod(\AAA)))$, then $$HH^n_{\mac}(\AAA, T) \cong HH^n_{\mac'}(\AAA, T').$$ If $\AAA$ is arbitrary, then $\Mod(\AAA) \cong \Mod(\free(\AAA))$ and if $T$ and $T'$ correspond under the equivalence $C(\Fun(\mathsf{free}(\AAA), \Mod(\AAA))) \cong C(\Fun(\mathsf{free}(\AAA), \Mod(\free(\AAA))))$, then 
$$HH^n_{\mac'}(\AAA, T) \cong HH^n_{\mac}(\free(\AAA), T').$$
By Theorem \ref{pira}, $\CC_{\mac'}(\AAA ,T)$ directly generalizes the earlier definition for $M \in C(\Mod(\AAA^{\op} \otimes \AAA)) \cong C(\Add(\mathsf{free}(\AAA), \Mod(\AAA)))$.
\end{remark}

As proven in \cite{jibladzepirashvili}, Mac Lane cohomology also has a natural interpretation in terms of Hochschild-Mitchel cohomology. Let $\AAA$ be a small (non-linear) category. 
Recall from \cite{baueswirsching} that a natural system $M$ on $\AAA$ is given by abelian groups $M(\lambda)$ associated to the morphisms $\lambda: A \lra B$ of $\AAA$, and morphisms $M(\lambda) \lra M(\lambda')$ associated to compositions $\lambda' = b\lambda a$ in $\AAA$ with $a: A' \lra A$ and $b: B \lra B'$ (satisfying the natural associativity condition). Hochschild-Mitchel cohomology of $\AAA$ with values in $M$ is the cohomology of the natural ``Hochschild type'' complex with
$$\CC^n_{\mathrm{mitch}}(\AAA, M) = \prod_{(\lambda_1, \dots, \lambda_n) \in N_n(\AAA)} M(\lambda_n \dots \lambda_1)$$
where $\xymatrix{{A_0} \ar[r]_{\lambda_1} & {A_1} \ar[r] & {\dots} \ar[r]_{\lambda_n} & {A_n}}$ is a sequence of $\AAA$-morphisms in the nerve of $\AAA$. 
We have
$$\CC_{\mathrm{mitch}}(\AAA, M) \cong \RHom_{\mathsf{Nat}(\AAA)}(\underline{\Z}, M)$$
where $\mathsf{Nat}(\AAA)$ is the abelian category of natural systems on $\AAA$ and $\underline{\Z}$ is the constant natural system with $\underline{\Z}(\lambda: A \lra B) = \Z$. A bifuncor $M: \AAA^{\op} \times \AAA \lra \Ab$ is naturally considered as a natural system with $M(\lambda: A \lra A') = M(A, A')$.

Now we return to the setting of a $\Z$-linear category $\AAA$. For $T \in C(\Fun(\AAA, \Mod(\AAA)))$, consider $T$ as a complex of bifunctors $\AAA^{\op} \times \AAA \lra \Ab$, and hence as a natural system.

\begin{proposition} \cite[Proposition 3.12]{jibladzepirashvili}\label{mitchlink}
We have:
$$\CC_{\mac}(\AAA, T) \cong \CC_{\mathrm{mitch}}(\AAA, T).$$
\end{proposition}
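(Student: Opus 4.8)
The plan is to compute $\CC_{\mac}(\AAA, T) = \RHom_{\Fun(\AAA, \Mod(\AAA))}(I, T)$ by resolving the object $I \in \Fun(\AAA, \Mod(\AAA))$ (with $I(A)(A') = \AAA(A', A)$) explicitly by projectives built from the nerve of $\AAA$, and then to observe that $\Hom(-, T)$ applied to that resolution is, on the nose, the Hochschild--Mitchell complex $\CC_{\mathrm{mitch}}(\AAA, T)$ recalled above.

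First I would identify the relevant projectives of $\Fun(\AAA, \Mod(\AAA))$. For $A \in \AAA$, the evaluation functor $\mathrm{ev}_A \colon \Fun(\AAA, \Mod(\AAA)) \lra \Mod(\AAA)$, $F \longmapsto F(A)$, is exact and has a left adjoint $L_A$ with $L_A(Q)(B) = \Z[\AAA(A, B)] \otimes_\Z Q$; the coefficient here is the free abelian group on the \emph{set} of morphisms $A \to B$, which is exactly the point at which $\Fun(\AAA, \Mod(\AAA))$ parts ways with $\Mod(\AAA^{\op} \otimes \AAA)$ and non-additivity enters. Since $\mathrm{ev}_A$ is exact, $L_A$ preserves projectives; as $\AAA(-, C)$ is projective in $\Mod(\AAA)$, each $L_A(\AAA(-, C))$ is projective in $\Fun(\AAA, \Mod(\AAA))$, and by adjunction together with Yoneda
$$\Hom_{\Fun(\AAA, \Mod(\AAA))}(L_A(\AAA(-, C)), T) = \Hom_{\Mod(\AAA)}(\AAA(-, C), T(A)) = T(A)(C),$$
the value at $(C, A)$ of $T$ viewed as a bifunctor on $\AAA^{\op} \times \AAA$.

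Next I would form the (unnormalised) bar resolution $\beta_\bullet \lra I$ in $\Fun(\AAA, \Mod(\AAA))$ with
$$\beta_n = \bigoplus_{(\lambda_1, \dots, \lambda_n) \in N_n(\AAA)} L_{A_n}(\AAA(-, A_0)),$$
where $A_0$ is the source of $\lambda_1$ and $A_n$ the target of $\lambda_n$, equipped with the usual simplicial differential: $d_0$ post-composes $\lambda_1$ into the $\AAA(-, A_0)$-slot, $d_n$ pre-composes $\lambda_n$ into the $L_{A_n}$-slot, and $d_i$ for $0 < i < n$ replaces $\lambda_i, \lambda_{i+1}$ by $\lambda_{i+1}\lambda_i$. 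Each $\beta_n$ is a direct sum of projectives, hence projective. Exactness of $\beta_\bullet \lra I$ is checked pointwise: evaluating at $B$ and then at $B'$, the augmented complex $\beta_\bullet(B)(B') \to \AAA(B', B) = I(B)(B')$ is the two-sided bar complex computing $\Tor_\ast^{\Z\AAA}(\AAA(B', -), (\Z\AAA)(-, B))$ over the linearisation $\Z\AAA$ of the underlying category of $\AAA$, and its second argument is a representable (hence projective) $\Z\AAA$-module; equivalently, the extra degeneracy that appends $\mathrm{id}_B$ in the last set-coefficient slot is an explicit contracting homotopy. So $\beta_\bullet \lra I$ is a projective resolution; being concentrated in cohomological degrees $\leq 0$ and termwise projective, $\beta_\bullet$ is $K$-projective, hence $\RHom_{\Fun(\AAA, \Mod(\AAA))}(I, T) \cong \Hom^\bullet_{\Fun(\AAA, \Mod(\AAA))}(\beta_\bullet, T)$ for any complex $T$ (one totalises when $T$ is a complex).

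Finally, applying $\Hom_{\Fun(\AAA, \Mod(\AAA))}(-, T)$ to $\beta_\bullet$ and using the displayed identity above together with $\Hom(\bigoplus_i X_i, T) = \prod_i \Hom(X_i, T)$, the term in degree $n$ is
$$\prod_{(\lambda_1, \dots, \lambda_n) \in N_n(\AAA)} T(A_0, A_n);$$
since $T(A_0, A_n) = T(\lambda_n \cdots \lambda_1)$ is precisely the value of $T$ regarded as a natural system, this is $\CC^n_{\mathrm{mitch}}(\AAA, T)$, and the cosimplicial differential dual to the bar differential is, by inspection, the Baues--Wirsching coboundary. This gives $\CC_{\mac}(\AAA, T) \cong \CC_{\mathrm{mitch}}(\AAA, T)$, which is essentially \cite[Proposition 3.12]{jibladzepirashvili}. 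I expect the only real work to be careful bookkeeping rather than anything conceptual: one must linearise the \emph{morphism sets} of $\AAA$, not the $\Z$-linear hom-groups, so that every $\beta_n$ is genuinely projective in the non-additive category $\Fun(\AAA, \Mod(\AAA))$, and one must match the sign conventions of the simplicial differential with those of the Hochschild--Mitchell coboundary; once that is done, acyclicity of $\beta_\bullet$ is nothing more than the triviality of the bar resolution of a representable module.
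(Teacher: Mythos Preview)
The paper does not supply a proof of this statement; it simply cites \cite[Proposition 3.12]{jibladzepirashvili}. Your argument is correct and is essentially the standard proof behind that reference: one builds the nerve-indexed bar resolution $\beta_\bullet \to I$ in $\Fun(\AAA,\Mod(\AAA))$ from the projectives $L_{A_n}(\AAA(-,A_0))$, checks exactness via the usual extra degeneracy, and then the identification $\Hom(L_A(\AAA(-,C)),T) \cong T(A)(C)$ turns $\Hom(\beta_\bullet,T)$ into the Baues--Wirsching complex on the nose. Your emphasis that one must linearise the \emph{set} $\AAA(A,B)$ rather than the abelian hom-group (so that $L_A$ is left adjoint to evaluation on the non-additive functor category) is exactly the point, and the remaining verifications (projectivity of the direct sums, $K$-projectivity of a bounded-above complex of projectives, matching of differentials) are routine bookkeeping as you say.
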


\subsection{Mac Lane cohomology of additive sites}\label{MLadd}
In this subsection, we adapt the notions of the previous subsection to the situation of a linear site.
Let $(\UUU, \ttt)$ be a $\Z$-linear site with additive sheaf category $\Sh(\UUU)$ and canonical functor $u: \UUU \lra \Sh(\UUU)$. We start with an analogue of Proposition \ref{newgroth}. Consider the localization
$$i \circ -: \Fun(\UUU, \Sh(\UUU)) \lra \Fun(\UUU, \Mod(\UUU)),$$
$$a \circ -: \Fun(\UUU, \Mod(\UUU)) \lra \Fun(\UUU, \Sh(\UUU))$$
induced by $i: \Sh(\UUU) \lra \Mod(\UUU)$, $a: \Mod(\UUU) \lra \Sh(\UUU)$.

\begin{proposition}\label{newgrothmac}
We have:
$$\CC_{\mac}(\UUU, R(i \circ -)u) \cong \RHom_{\Fun(\UUU, \Sh(\UUU))}(u,u).$$
Furthermore, for every natural transformation $u \lra F$ in $C(\Fun(\UUU, \Sh(\UUU)))$ for which every $u(U) \lra F(U)$ is an injective resolution, we have:
$$\CC_{\mathrm{mac}}(\UUU, F) \cong \RHom_{\Fun(\UUU, \Sh(\UUU))}(u,u).$$
\end{proposition}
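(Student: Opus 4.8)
\emph{Proof plan.} The plan is to repeat the formal core of the proof of Proposition~\ref{newgroth}, simplified by the fact that $\CC_{\mac}(\UUU,-)$ is \emph{by definition} an $\Ext$ in $\Fun(\UUU,\Mod(\UUU))$ (Definition~\ref{defmac}(1)) --- a sheaf-valued complex being regarded, as for $\CC_{\hoch}(\UUU,-)$ in Proposition~\ref{newgroth}, as an object of $\Fun(\UUU,\Mod(\UUU))$ via $i\circ-$ --- so no Hochschild complex has to be modeled and no ground field enters. Two observations drive everything. First, post-composition with the localization $(a,i)$ gives a localization $a\circ-:\Fun(\UUU,\Mod(\UUU))\lra\Fun(\UUU,\Sh(\UUU))$ with right adjoint $i\circ-$, in which $a\circ-$ is exact (kernels and cokernels of functors are pointwise and $a$ is exact); hence $i\circ-$ is left exact, preserves injectives, and $R(i\circ-)$ is computed by applying $i\circ-$ to injective resolutions. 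Second, writing $I:\UUU\lra\Mod(\UUU)$ for the Yoneda embedding and $u:\UUU\lra\Sh(\UUU)$, $U\mapsto a(\UUU(-,U))$, for the canonical functor, one has $(a\circ-)(I)=u$, so the adjunction yields a natural isomorphism $\Hom_{\Fun(\UUU,\Mod(\UUU))}(I, i\circ G)\cong\Hom_{\Fun(\UUU,\Sh(\UUU))}(u,G)$ for $G\in\Fun(\UUU,\Sh(\UUU))$, which extends termwise to complexes.

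For the first isomorphism I would choose an injective resolution $u\lra E$ in the Grothendieck category $\Fun(\UUU,\Sh(\UUU))$. Then $R(i\circ-)(u)$ is represented by $i\circ E$, which is a bounded below complex of injectives in $\Fun(\UUU,\Mod(\UUU))$, so $\RHom_{\Fun(\UUU,\Mod(\UUU))}(I,i\circ E)$ is the naive Hom-complex $\Hom^{\bullet}_{\Fun(\UUU,\Mod(\UUU))}(I,i\circ E)$. The natural isomorphism above identifies this termwise with $\Hom^{\bullet}_{\Fun(\UUU,\Sh(\UUU))}(u,E)$, which computes $\RHom_{\Fun(\UUU,\Sh(\UUU))}(u,u)$ because $u\lra E$ is an injective resolution; reading off definitions gives $\CC_{\mac}(\UUU,R(i\circ-)u)\cong\RHom_{\Fun(\UUU,\Sh(\UUU))}(u,u)$.

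For the ``furthermore'' clause, let $u\lra F$ be as in the statement, so each $u(U)\lra F(U)$ is an injective resolution and in particular $\alpha:u\lra F$ is a pointwise --- hence genuine --- quasi-isomorphism in $\Fun(\UUU,\Sh(\UUU))$. Since $E$ is a bounded below complex of injectives, the augmentation $\beta:u\lra E$ factors up to homotopy as $\beta\simeq\psi\alpha$ for some chain map $\psi:F\lra E$, which is a quasi-isomorphism by two-out-of-three. The one genuinely non-formal point --- the analogue of Lemma~\ref{lemEF} --- is that $i\circ-$ is \emph{not} exact, so that $i\circ\psi$ is a quasi-isomorphism cannot be concluded from exactness. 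Instead I would note that each evaluation functor $\Fun(\UUU,\Sh(\UUU))\lra\Sh(\UUU)$ has an exact left adjoint (left Kan extension along $\{U\}\hookrightarrow\UUU$, using exactness of coproducts in $\Sh(\UUU)$), hence preserves injectives; therefore $E(U)$, like $F(U)$, is a bounded below complex of injective sheaves, so the quasi-isomorphism $\psi(U):F(U)\lra E(U)$ is a chain homotopy equivalence, which the additive functor $i:\Sh(\UUU)\lra\Mod(\UUU)$ carries to a chain homotopy equivalence $(i\circ F)(U)\lra(i\circ E)(U)$. Thus $i\circ\psi$ is a pointwise, hence genuine, quasi-isomorphism, so $i\circ F$ also represents $R(i\circ-)(u)$, and
$$\CC_{\mac}(\UUU,F)=\RHom_{\Fun(\UUU,\Mod(\UUU))}(I,i\circ F)\cong\RHom_{\Fun(\UUU,\Mod(\UUU))}(I,i\circ E)\cong\RHom_{\Fun(\UUU,\Sh(\UUU))}(u,u)$$
by the first part. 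The only real obstacle is this non-exactness point; note that, unlike Lemma~\ref{lemEF}, it requires no cofibrancy of $\UUU$, only properties of $\Sh(\UUU)$. Everything else is the adjunction bookkeeping of Proposition~\ref{newgroth} with the dg/Hochschild layer removed.
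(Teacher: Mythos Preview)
Your argument is correct and matches the paper's: the first isomorphism is exactly the derived adjunction for $(a\circ-)\dashv(i\circ-)$ applied to $(a\circ-)(I)=u$, and for the second the paper simply cites Lemma~\ref{lemEF} with $\AAA=\Z\UUU$, whose proof (fibrant resolution is pointwise fibrant, hence pointwise homotopy equivalent to $F$, hence $i\circ-$ preserves the quasi-isomorphism) is precisely what you spell out inline via the exact left adjoint to evaluation. One minor correction: your remark that you avoid a cofibrancy hypothesis is not a genuine distinction, since $\Z\UUU$ has free abelian hom-sets and is automatically $\Z$-cofibrant, so the paper's invocation of Lemma~\ref{lemEF} likewise imposes no condition on $\UUU$.
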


\begin{proof}
The first line immediately follows from the adjunction. Furthermore, by Lemma \ref{lemEF} (with $\AAA = \Z \UUU$) we have $R(i \circ -)(u) \cong iF$ whence the second statement follows.
\end{proof}

We define $\CC_{\mac}(\UUU, \ttt) = \RHom_{\Fun(\UUU, \Sh(\UUU))}(u,u)$.

\subsection{Mac Lane cohomology of exact categories and (bi)sheaf categories}
Let $\ccc$ be an exact $\Z$-linear category with canonical embedding $\iota: \ccc \lra \Lex(\ccc)$. This subsection is parallel to \S \ref{hochbisheaf}. We define Mac Lane cohomology of $\ccc$ to be Mac Lane cohomology of the natural site $(\ccc, \ttt)$ where $\ttt$ is the single deflation topology. Concretely,
\begin{equation}\label{macex}
\CC_{\mac, \ex}(\ccc) = \RHom_{\Fun(\ccc, \Lex(\ccc))}(\iota, \iota).
\end{equation}

We have the following analogue of Proposition \ref{linex}:

\begin{proposition}\label{linexmac}
Let $\AAA$ be a $\Z$-linear category and $\mathsf{free}(\AAA)$ the exact category of finitely generated free $\AAA$-modules with split exact conflations. We have:
$$\CC_{\mac, \mathrm{ex}}(\mathsf{free}(\AAA)) \cong \CC_{\mac'}(\AAA, I).$$
\end{proposition}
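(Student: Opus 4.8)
The plan is to follow the pattern of the proof of Proposition~\ref{linex}, but since both $\CC_{\mac, \ex}$ and $\CC_{\mac'}$ are \emph{by definition} $\RHom$'s in functor categories, the whole argument reduces to identifying the ambient categories and the two distinguished objects $\iota$ and $I$; no dg resolutions, and no passage through $\CC_{\sh}$, are needed.

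First I would record the two definitions side by side. By \eqref{macex}, $\CC_{\mac, \ex}(\mathsf{free}(\AAA)) = \RHom_{\Fun(\mathsf{free}(\AAA), \Lex(\mathsf{free}(\AAA)))}(\iota, \iota)$, with $\iota \colon \mathsf{free}(\AAA) \lra \Lex(\mathsf{free}(\AAA))$ the canonical embedding $C \longmapsto \mathsf{free}(\AAA)(-, C)$; and by Definition~\ref{defmac}(2), $\CC_{\mac'}(\AAA, I) = \RHom_{\Fun(\mathsf{free}(\AAA), \Mod(\AAA))}(I, I)$, with $I \colon \mathsf{free}(\AAA) \lra \Mod(\AAA)$ the inclusion of the finitely generated free modules.

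Next I would use Remark~\ref{remsplit}: the conflations of $\mathsf{free}(\AAA)$ are split, so its deflation topology is the trivial one and $\Lex(\mathsf{free}(\AAA)) = \Sh^{\add}(\mathsf{free}(\AAA)) = \Mod(\mathsf{free}(\AAA))$. Since $\mathsf{free}(\AAA)$ is the closure of (the Yoneda image of) $\AAA$ under finite biproducts in $\Mod(\AAA)$, restriction along $\AAA \hookrightarrow \mathsf{free}(\AAA)$ is an equivalence $\Mod(\mathsf{free}(\AAA)) \xrightarrow{\ \sim\ } \Mod(\AAA)$, as already used in Proposition~\ref{linex} and Examples~\ref{examples}(1). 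Composing yields an equivalence $\Lex(\mathsf{free}(\AAA)) \simeq \Mod(\AAA)$.

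The one point deserving a short verification --- and the only place I foresee any friction, because of the two guises of $\mathsf{free}(\AAA)$ (as an abstract additive hull and as a full subcategory of $\Mod(\AAA)$) --- is that this equivalence carries $\iota$ to $I$. Viewing $\mathsf{free}(\AAA) \subseteq \Mod(\AAA)$, the restriction of the representable $\mathsf{free}(\AAA)(-, C)$ to $\AAA$ is, by Yoneda inside $\Mod(\AAA)$, the functor $A \longmapsto \mathsf{free}(\AAA)(\AAA(-, A), C) = C(A)$, i.e. $C$ itself; hence $\iota$ is identified with $I$. The induced equivalence $\Fun(\mathsf{free}(\AAA), \Lex(\mathsf{free}(\AAA))) \simeq \Fun(\mathsf{free}(\AAA), \Mod(\AAA))$ therefore sends $\iota$ to $I$, and consequently
$$\CC_{\mac, \ex}(\mathsf{free}(\AAA)) = \RHom_{\Fun(\mathsf{free}(\AAA), \Lex(\mathsf{free}(\AAA)))}(\iota, \iota) \cong \RHom_{\Fun(\mathsf{free}(\AAA), \Mod(\AAA))}(I, I) = \CC_{\mac'}(\AAA, I),$$
which is the assertion.
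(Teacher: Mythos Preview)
Your proof is correct and follows exactly the same approach as the paper: both reduce to the equivalence $\Lex(\mathsf{free}(\AAA)) \cong \Mod(\mathsf{free}(\AAA)) \cong \Mod(\AAA)$ from Remark~\ref{remsplit} and then observe that the two $\RHom$'s match under this identification. The paper's proof is a single sentence (``the result immediately follows from the definitions''), whereas you have helpfully spelled out the one nontrivial verification, namely that $\iota$ corresponds to $I$ under the equivalence; this is precisely what the paper leaves implicit.
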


\begin{proof}
We have $\Lex(\mathsf{free}(\AAA)) \cong \Mod(\mathsf{free}(\AAA)) \cong \Mod(\AAA)$ (see Remark \ref{remsplit}). Hence the result immediately follows from the definitions.
\end{proof}

Let $I$ denote the identity $\ccc$-bimodule with $I(C',C) = \ccc(C',C)$.
Using the results of \S \ref{sub2var}, we obtain the following expression in terms of sheaves in two variables:

\begin{theorem}\label{MLex}
We have:
$$\CC_{\mac, \ex}(\ccc) \cong  \RHom_{\Fun^{\triangleleft}_{\diamond}(\ccc^{\op} \times \ccc)}(I,I).$$
\end{theorem}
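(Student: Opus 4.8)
The plan is to follow the proof of Theorem~\ref{bilex} almost verbatim, systematically replacing the bimodule categories $\Fun^{\diamond}_{\star}(\ccc^{\op}\times\ccc)$ (additive in both variables) by the bifunctor categories $\Fun^{\triangleleft}_{\star}(\ccc^{\op}\times\ccc)$ (additive in the first variable only), and --- the one genuinely new point --- replacing the field-dependent Proposition~\ref{1naar2} by the field-free Proposition~\ref{key}, since the present statement is made over $k=\Z$.

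First I unwind the definition~\eqref{macex}: $\CC_{\mac,\ex}(\ccc)=\RHom_{\Fun(\ccc,\Lex(\ccc))}(\iota,\iota)$. Under the equivalence $\Fun(\ccc,\Lex(\ccc))\cong\Fun^{\triangleleft}_{\triangleleft}(\ccc^{\op}\times\ccc)$ recorded in \S\ref{sub2var}, the canonical embedding $\iota$ corresponds to the identity bimodule $I$, so $\CC_{\mac,\ex}(\ccc)\cong\RHom_{\Fun^{\triangleleft}_{\triangleleft}(\ccc^{\op}\times\ccc)}(I,I)$, and it remains to pass from ``sheaf in the first variable'' to ``sheaf in both variables''. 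Here I first note that $I$ in fact lies in $\Fun^{\triangleleft}_{\diamond}(\ccc^{\op}\times\ccc)$: the representables $\ccc(-,C)$ lie in $\Lex(\ccc)$, and, using the exact structure on $\ccc^{\op}$ from Examples~\ref{examples}(4), the functors $\ccc(C,-)=\ccc^{\op}(-,C)$ lie in $\Lex(\ccc^{\op})$, so $I$ is already a sheaf in both arguments. Now consider the inclusions $i_{\bbb}\colon\Fun^{\triangleleft}_{\triangleleft}(\ccc^{\op}\times\ccc)\lra\Fun^{\triangleleft}(\ccc^{\op}\times\ccc)$ and $i\colon\Fun^{\triangleleft}_{\diamond}(\ccc^{\op}\times\ccc)\lra\Fun^{\triangleleft}(\ccc^{\op}\times\ccc)$; by \S\ref{parbimodules} both are localizations with exact left adjoint $a$, and $a(I)=I$ in either case. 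The derived adjunction $La=a\dashv Ri$ then yields
\[
\RHom_{\Fun^{\triangleleft}_{\triangleleft}}(I,I)\cong\RHom_{\Fun^{\triangleleft}}(I,Ri_{\bbb}(I)),\qquad
\RHom_{\Fun^{\triangleleft}_{\diamond}}(I,I)\cong\RHom_{\Fun^{\triangleleft}}(I,Ri(I)),
\]
(all categories being over $\ccc^{\op}\times\ccc$), so that the theorem reduces to the identification $Ri_{\bbb}(I)\cong Ri(I)$ in $D(\Fun^{\triangleleft}(\ccc^{\op}\times\ccc))$.

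This identification is precisely Proposition~\ref{key} with $\aaa=\bbb=\ccc$ and $F$ the functor $\iota\colon\ccc\lra\Lex(\ccc)$, $C\longmapsto\ccc(-,C)$ (which corresponds to $I$). Its hypothesis holds because $\iota$ is exact: $A\to B\to C$ is a conflation in $\ccc$ exactly when $0\to\ccc(-,A)\to\ccc(-,B)\to\ccc(-,C)\to 0$ is exact in $\Lex(\ccc)$. Stringing the isomorphisms together gives $\CC_{\mac,\ex}(\ccc)\cong\RHom_{\Fun^{\triangleleft}_{\diamond}(\ccc^{\op}\times\ccc)}(I,I)$.

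Once the machinery of Section~\ref{sec.2} is in hand the argument is short, so there is no deep obstacle; the point that must be handled with care is exactly the appeal to Proposition~\ref{key} rather than to Proposition~\ref{1naar2}. The latter, used for Theorem~\ref{bilex}, requires $k$ to be a field, whereas the Mac Lane statement lives over $k=\Z$; Proposition~\ref{key} supplies the needed ``cohomological in both variables'' comparison $Ri_{\bbb}(F)\cong Ri(F)$ with no hypothesis on the ground ring, which is precisely what makes the absolute case go through. The remaining verifications --- that the equivalence of \S\ref{sub2var} identifies $\iota$ with $I$, and that the two inclusions above are localizations with exact left adjoints fixing $I$ --- are routine and already contained in \S\ref{parbimodules}--\S\ref{sub2var}.
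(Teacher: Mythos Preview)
Your proof is correct and follows essentially the same route as the paper's own argument: translate the definition into $\RHom_{\Fun^{\triangleleft}_{\triangleleft}}(I,I)$, invoke Proposition~\ref{key} to identify $Ri_{\bbb}(I)$ with $Ri(I)$ in $D(\Fun^{\triangleleft}(\ccc^{\op}\times\ccc))$, and conclude by the derived adjunction. Your additional remarks---that Proposition~\ref{key} rather than Proposition~\ref{1naar2} is the correct input here because no field hypothesis is available, and the explicit check that $\iota$ is exact---make explicit what the paper leaves implicit, but the structure of the argument is the same.
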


\begin{proof}
Consider $i_1: \Fun^{\triangleleft}_{\triangleleft}(\ccc^{\op} \times \ccc) \lra \Fun^{\triangleleft}(\ccc^{\op} \times \ccc)$, which is isomorphic to $i \circ -: \Fun(\ccc, \Lex(\ccc)) \lra \Fun(\ccc, \Mod(\ccc))$.
The definition \eqref{macex} translates into $\CC_{\mac, \ex}(\ccc) \cong \RHom_{\Fun^{\triangleleft}_{\triangleleft}(\ccc^{\op} \times \ccc)}(I,I)$. From Proposition \ref{key} we further obtain $Ri_1(I) \cong Rj(I)$ for $j: \Fun^{\triangleleft}_{\diamond}(\ccc^{\op} \times \ccc) \lra \Fun^{\triangleleft}(\ccc^{\op} \times \ccc)$, so 
$$\CC_{\mac, \ex}(\ccc) \cong \RHom_{\Fun^{\triangleleft}}(I, Rj(I)) \cong \RHom_{\Fun^{\triangleleft}_{\diamond}(\ccc^{\op} \times \ccc)}(I,I)$$
by adjunction.
\end{proof}

\section{Discussion}\label{sec.4}

To finish the paper, let us now explain informally and without
proofs the motivations behind our various definitions and
constructions.

First of all, our emphasis on abelian and exact categories seems
distinctly old-fashioned; these days, it is much more common to
start with a triangulated category (for example, the derived
category $\ddd(\ccc)$ of an abelian category $\ccc$ instead of the
category $\ccc$ itself). The problem with this approach is that of
course just a triangulated category is not enough -- the category of
exact functors from a triangulated category to itself is not
triangulated. To get the correct endofunctor category, one needs
some enhancement, see e.g. \cite{bondalkapranov}.

When working over a field, a DG enhancement (see \cite{keller1}, \cite{keller8}) would do the job, but at
the cost of technical complications which obscure the essential
content of the theory. Thus a purely abelian treatment is also
useful. Moreover, there is one point where the abelian treatment
should be considerably simpler. Namely, assume given an abelian
category $\ccc$ and another abelian category $\ccc'$ which is a
``square-zero extension'' of $\ccc$ in some sense (for example,
$\ccc$ could be modules over some algebra, and $\ccc'$ could be
modules over a square-zero extension of this algebra). Then we have
a pair of adjoint functors $i_*:\ccc \lra \ccc'$, $i^*:\ccc' \lra
\ccc$, with $i_*$ being exact and fully faithful, and the total
derived functor $L^\hdot i^*$. It turns out that in a rather general
situation, it is the composition $L^1i^* \circ i_*:\ccc \lra \ccc$ of
the first derived functor $L^1i^*$ with the embedding $i_*$ which
serves as tangent space to $\ccc$ inside $\ccc'$. Moreover, taking
the appropriate canonical truncation of the total derived functor,
we obtain a complex of functors with $0$-th homology isomorphic to
the identity $\id$ and the first homology isomorphic to $L^1i^*
\circ i_*$. By Yoneda, this complex represents a class in
$$
\Ext^2(\id,L^1i^* \circ i_*),
$$
and it is this class that should be the Hochschild cohomology class of
the square-zero extension.

Of course, even with the various functor categories introduced in
the present paper, making the above sketch precise requires some
work, and we relegate it to a subsequent paper. Nevertheless, it is
already obvious that the abelian context is essential: if one works
with enhanced triangulated categories, one cannot separate $L^1i^*$
from the total derived functor $L^\hdot i^*$.

When working absolutely, the situation becomes much more complicated
from the technical point of view. DG enhancement is no longer
sufficient; among the theories existing in the literature, the ones
which would apply are either spectral categories, see e.g. \cite{schwedeshipley}, or
$\infty$-categories in the sense of Lurie \cite{lurie}. Both require quite a lot
of preliminary work.

However, surprising as it may be, at least in the simple case
mentioned in the introduction, --- namely that of $\ccc$ being the
category $\Z/p\Z$-vector spaces, --- the correct ``absolute''
endofunctor category of $\ccc$ is very easy to describe.

Namely, let $\Fun(\ccc,\ccc)$ be the category of all functors from
$\ccc$ to itself that commute with filtered direct limits. It
is an abelian category, so that we can take its derived category
$\ddd(\ccc,\ccc)$. Then the triangulated category of ``absolute''
endofunctors of $\ccc$ should be the full triangulated subcategory
$$
\ddd_{add}(\ccc,\ccc) \subset \ddd(\ccc,\ccc)
$$
spanned by functors which are additive. We note that this is
different from the derived category of the abelian category of
additive functors -- indeed, since every additive functor in
$\Fun(\ccc,\ccc)$ is given by tensor product with a fixed vector
space $V \in \ccc$, the latter is just the derived category
$\ddd(\ccc)$. However, there are higher $\Ext$'s between additive
endofunctors in $\ddd(\ccc,\ccc)$ which do not occur in
$\ddd(\ccc)$. For example, for any vector space $V$, consider the
tensor power $V^{\otimes p}$, and let $\sigma:V^{\otimes p} \lra
V^{\otimes p}$ be the longest cycle permutation. Then one can
consider the complex
$$
\xy
\xymatrix{ {(V^{\otimes p})_\sigma} \ar[rr]^-{\id + \sigma + \dots +
 \sigma^{p-1}} && {(V^{\otimes p})^\sigma}}
 \endxy
$$
and it is easy to show that the homology of this complex is
naturally isomorphic to $V$ both in degree $1$ and in degree
$0$. The complex is functorial in $V$, thus defines by Yoneda an
element in
$$
\Ext^2(\id,\id)
$$
in the category $\ddd_{add}(\ccc,\ccc)$. This element is in fact
non-trivial, and corresponds to the square-zero extension $\Z/p^2\Z$
of the field $\Z/p\Z$.

The category $\ddd_{add}(\ccc,\ccc)$ is the simplest example of a triangulated
category of ``non-additive bimodules'' whose importance for Mac Lane
homology and topological Hochschild homology has been known since
the pioneering work of Jibladze and Pirashvili in the 1980ies, 
 see e.g. \cite{jibladzepirashvili1}, \cite{jibladzepirashvili}, \cite{loday13}. What we would like to do is to obtain a similar
category for an abelian category $\ccc$ which is not the category of
modules over an algebra (and for example does not have enough
projectives). Our best approximation to the correct category is
$\Fun^{\triangleleft}_{\diamond}(\ccc^{\op} \times \ccc)$. We
believe that it does give the correct absolute Hochschild
cohomology. However, one significant problem with this category is
that it does not have a natural tensor structure -- this is not
surprising, since its very definition is asymmetric. When $\ccc$ is
the category of $\Z/p\Z$-vector spaces, the triangulated category
$\ddd_{add}(\ccc,\ccc)$ does have a tensor structure given by the
composition of functors; however, our
$\Fun^{\triangleleft}_{\diamond}(\ccc^{\op} \times \ccc)$ gives
something like a DG enhancement for $\ddd_{add}(\ccc,\ccc)$, and the
tensor product appears to be incompatible with this DG
enhancement. Perhaps this is unavoidable, and one should expect
$\ddd_{add}(\ccc,\ccc)$ to be a genuinely ``topological''
triangulated category, with a spectral enhancement instead of a DG
one. Be it as it may, in practice, it is the tensor structure that
produces the Gerstenhaber bracket and other higher structures on
Hochschild cohomology, and it is thus unclear whether our absolute
Hochschild cohomology possesses these structures. The deformation
theory on a purely abelian level seems to work, though; we plan to
return to this in the future.

\def\cprime{$'$} \def\cprime{$'$}
\providecommand{\bysame}{\leavevmode\hbox to3em{\hrulefill}\thinspace}
\providecommand{\MR}{\relax\ifhmode\unskip\space\fi MR }
\providecommand{\MRhref}[2]{%
  \href{http://www.ams.org/mathscinet-getitem?mr=#1}{#2}
}
\providecommand{\href}[2]{#2}

\end{document}